\newtheorem{theorem}{Theorem}
\newtheorem{lemma}[theorem]{Lemma}
\newtheorem{corollary}[theorem]{Corollary}
\newtheorem{proposition}[theorem]{Proposition}
\newtheorem{remark}{Remark}
\newtheorem*{definition}{Definition}
\def\beq{\begin{equation}}
\def\eeq{\end{equation}}
\def\beqn{\begin{eqnarray*}}
\def\eeqn{\end{eqnarray*}}
\def\bitem{\begin{itemize}}
\def\eitem{\end{itemize}}
\def\benum{\begin{enumerate}}
\def\eenum{\end{enumerate}}
\def\bmult{\begin{multline*}}
\def\emult{\end{multline*}}
\def\bcenter{\begin{center}}
\def\ecenter{\end{center}}
\renewcommand{\corref}[1]{Corollary~\ref{cor:#1}}
\newcommand{\lemref}[1]{Lemma~\ref{lem:#1}}
\DeclareMathOperator*{\argmin}{arg\, min}
\def\bA{\boldsymbol{A}}
\def\bD{\boldsymbol{D}}
\def\bG{\boldsymbol{G}}
\def\bL{\boldsymbol{L}}
\def\bS{\boldsymbol{S}}
\def\bT{\boldsymbol{T}}
\def\bU{\boldsymbol{U}}
\def\bb{\mathbf{b}}
\def\bd{\mathbf{d}}
\def\bp{\mathbf{p}}
\def\bw{\mathbf{w}}
\def\bTheta{\boldsymbol{\Theta}}
\newcommand\bsigma{{\boldsymbol\sigma}}
\def\bbC{\mathbb{C}}
\def\bbF{\mathbb{F}}
\def\bbH{\mathbb{H}}
\def\bbN{\mathbb{N}}
\def\bbO{\mathbb{O}}
\def\bbP{\mathbb{P}}
\def\bbR{\mathbb{R}}
\def\bbS{\mathbb{S}}
\def\bbT{\mathbb{T}}
\def\bb\bU{\mathbb{\bU}}
\newcommand{\E}{\operatorname{\mathbb{E}}}
\renewcommand{\P}{\operatorname{\mathbb{P}}}
\newcommand{\expect}[1]{\mathbb{E}\left(#1\right)}
\newcommand{\pr}[1]{\mathbb{P}\left\{#1\right\}}
\def\\bUnif{\text{\bUnif}}
\newcommand{\1}{\mathds{1}}
\newcommand{\R}{\mathcal{R}}
\date{\today}
\begin{document}

\begin{frontmatter}

\title{Adaptive Estimation of Nonparametric Geometric Graphs}
\runtitle{Estimating Structured Graphons}

\begin{aug}
\author{\fnms{Yohann} \snm{De Castro}\ead[label=e1]{yohann.decastro},\ead[label=e5]{yohann.de-castro@ec-lyon.fr}}
\author{\fnms{Claire} \snm{Lacour}\ead[label=e2]{claire.lacour}\ead[label=e4]{claire.lacour@u-pem.fr}}
\and
\author{\fnms{Thanh Mai} \snm{Pham Ngoc}\ead[label=e3]{thanh.pham\_ngoc@math.u-psud.fr}}

\address{Laboratoire de Math\'ematiques d'Orsay,  Univ. Paris-Sud, CNRS,  Universit\'e Paris-Saclay, 91405 Orsay, France \printead{e1,e2,e3}\\
LAMA, Univ Gustave Eiffel, UPEM, Univ Paris Est Creteil, CNRS, F-77447 Marne-la-Vallée, France\\\printead{e4}\\
Institut Camille Jordan, École Centrale de Lyon, 69134 Écully \\\printead{e5}
}  
 \runauthor{De Castro, Lacour and Pham Ngoc}

\end{aug}

\begin{abstract}
{{\it :}\ }
This article studies the recovery of graphons when they are convolution kernels on compact (symmetric) metric spaces. This case is of particular interest since it covers the situation where the probability of an edge depends only on some unknown nonparametric function of the distance between latent points, referred to as Nonparametric Geometric Graphs (NGG).\\
In this setting, adaptive estimation of NGG is possible using a spectral procedure combined with a Goldenshluger-Lepski adaptation method. The latent spaces covered by our framework encompass (among others) compact symmetric spaces of rank one, namely real spheres and projective spaces. For these latter, explicit computations of the eigen-basis and of the model complexity can be achieved, leading to quantitative non-asymptotic results. The time complexity of our method scales cubicly in the size of the graph and exponentially in the regularity of the graphon. Hence, this paper offers an algorithmically and theoretically efficient procedure to estimate smooth NGG.\\
As a by product, this paper shows a non-asymptotic concentration result on the spectrum of integral operators defined by symmetric kernels (not necessarily positive).
\end{abstract}

\begin{keyword}[class=MSC]
\kwd[Primary ]{62G05}
\kwd[; secondary ]{60C05}
\kwd{60B15}
\end{keyword}

\begin{keyword}
\kwd{Graphon}
\kwd{Random Networks}
\kwd{Spectral Estimation}
\kwd{Kernel Matrix}
\kwd{Integral Operator}
\kwd{Spherical Harmonics}
\end{keyword}


\end{frontmatter}

\maketitle

\section{Introduction}

Over the recent years, the study of networks has become prevailing in many fields. Through the advent of  social networks, biological neural networks, food webs, protein interaction in genomics and World wide web for instance, large scale data have become available. Extracting information from those repositories of data is a true challenge. Random graphs prove to be particularly relevant to model real-world networks. They are capable to capture complex interactions between actors of a system. Vertices of a random graph usually represent entities of a system and the edges stand for the presence of a specified relation between those entities.  An important statistical problem is seeking better and more informative representations of random graphs. 

Following the seminal work of \cite{Erdos} various random graphs models have been suggested, see \cite{Bollobas,Newman,Kolac, hoff2002,matias2014modeling} and references therein. Aside from classical random graphs, random geometric graphs, see \cite{Penrose, Liben, Parthasarathy} have emerged as an interesting alternative to model real networks having spatial content. Examples include the Internet (where the nodes are the routers) and other physical communication networks such as road networks or neural networks in the brain. Recall that a random geometric graph is an undirected graph  in which each vertex is assigned a latent (unobservable) random label in some metric spaces $\mathbf S$. Two vertices are connected by an edge if the distance between them is smaller than some threshold. Assuming that the underlying metric is the unit sphere~$\mathds S^{\bd-1}$ and latent variables drawn from the uniform distribution on $\mathds S^{\bd-1}$, the paper \cite{bubeck2016testing} considered the problem of testing if the observed graph is an Erd\"os-R\'enyi one (no geometric structure) or a geometric graph on the sphere where points are connected if their distance is smaller than some threshold. 

More generally, random graphs with latent space can be characterized by the so-called graphon. In fact, graphons can be seen as kernel functions for latent position graphs. For more insight about the theory of graphon, we refer to the excellent monograph of \cite{lovasz2012large}.  In the case of graphons defining positive definite kernels, the paper~\cite{tang2013universally} proved that the eigen-decomposition of the adjacency matrix yields consistent estimator of the graphon feature maps involving the latent variables. Besides, nonparametric representations of graphons has gained  attention. Statistical approaches on estimating graphons have been developed using Least-Squares estimation~\cite{klopp2017oracle} or Maximum Likelihood estimation \cite{wolfe2013nonparametric}. Dealing with estimation of (sparse) graphons from the observation of the adjacency matrix, the paper \cite{klopp2017oracle} derives sharp rates of convergence for the $\mathbf L^2$ loss for the Stochastic Block Model. We mention also the general methodology, referred to as USVT algorithm, of \cite{chatterjee2015matrix} that can be invoked to control the $\mathbf L^2$ loss between the probability matrix and a eigenvalue-tresholded version of the adjacency matrix. However, note that the present paper is more concerned (see Theorem~\ref{thm:KG_revisited}) by controlling the distance between the probability matrix and its integral operator. The USVT point of view has been further investigated in \cite{xu2017rates} 
that gives rates of convergence for the so-called ‘‘{\it probability matrix}\,'' estimation problem (see Section~\ref{sec:proba}), under smoothness assumptions. Motivated by sharp control of individual eigenvalues behavior (invoking Weyl's perturbation Theorem, see \cite[page 63]{bhatia2013matrix}), we choose to control the difference between the adjacency matrix and the probability matrix in operator norm, see Proposition~\ref{prop:BvH}. 


\subsection{A Statistical Pledge for Structured Latent Spaces}
The graphons are limiting objects that describe large dense graphs. The graphon model~\cite{lovasz2012large} is standardly and without loss of generality formulated choosing~$[0,1]$ as latent space. In this model, given latent points $x_1,\ldots,x_n\in[0,1]$, the probability to draw an edge between $i$ and~$j$ is $W(x_i,x_j)$ where $W$ is a function from $[0,1]^2$ onto $[0,1]$, referred to as a graphon. This model is general and well referenced in the literature\textemdash as mentioned earlier, the reader may consult the book \cite{lovasz2012large} for further details. 

However, this model may underneath intrinsic features of a random graph. For instance, recall the prefix attachment graph model \cite[page 190]{lovasz2012large} where the nodes are added one at a time and each new node connects to a random previous node and all its predecessors. In this model, the graph sequence converges in cut distance \cite[Proposition 11.42]{lovasz2012large} to the graphon $W_{\mathrm{pref}}$ defined as, for all $(x_1,y_1),(x_2,y_2)\in[0,1]^2$,
\begin{equation}
\label{eq:pref}
 W_{\mathrm{pref}}((x_1,y_1),(x_2,y_2))=\mathds{1}(x_1<x_2y_2)+\mathds{1}(x_2<x_1y_1)\,,
\end{equation}
up to a measure preserving homomorphism of the latent space $[0,1]^2$. From a statistical point of view, the estimation of the function $((x_1,y_1),(x_2,y_2))\mapsto\mathds{1}(x_1<x_2y_2)$ from sample points~$((x_k,y_k))_k$ uniformly distributed on~$[0,1]^2$ is a well understood standard task. 

Yet one may also represent this graphon on the standard latent space $[0,1]$. And, in this case, one cannot represent this graphon using the indicator function of two symmetric convex sets with piecewise smooth border as done in~\eqref{eq:pref}. 
Actually, in this case, a fractal-like structure appear and the statistical estimation of this function seems more difficult than in~\eqref{eq:pref}. 
Our statement may be loose here but one may emphasize that there may exist better latent spaces than~$[0,1]$ on which the graphon may present a simple and better estimable formulation. 

An other important statistical issue is that, by construction, graphons are defined on an equivalent class ‘‘{\it up to a measure preserving homomorphism}'' and it can be challenging to have a simple description from an observation given by sampled graph\textemdash since one has to deal with all possible composition of a bivariate function by any measure preserving homomorphism. In this paper, we circumvent this disappointing statistical issue restraining our attention to graph models for which the probability of appearance of an edge depends as a nonparametric function of  the distance between latent points.

\subsection{Main results}

In this paper, we focus on latent metric spaces for which the distance is invariant by translation (or conjugation) of pairs of points. This natural assumption leads to consider that the latent space~$\mathbf S$ has some group structure, namely it is a compact Lie group or some compact symmetric space (‘‘{\it Intuitively, a symmetric space is a Riemannian manifold where geodesics are ‘‘nicely'' arranged in a symmetric way around any point}''\cite[Chapter 3]{plmHDR}). Hence, consider graphons defined as functions $\mathbf p$ of (the cosine of) the distance~$\gamma$ (normalized so that the range of~$\gamma$ equals $[0,\pi]$) of some compact Lie group~$\mathbf S$, or more generally of some compact symmetric space, see Section~\ref{sec:LatentSpaces}. In this case, the graphon is given by 
\[
\forall x,y\in\mathbf S,\quad W(x,y)=\mathbf p(\cos \gamma(x,y))=\mathbf p(\cos \gamma(z,e))\quad\mathrm{and}\quad z=xy^{-1}
\,,
\] 
where $y^{-1}$ is the inverse of $y$, $e$ denotes the identity element of $\bS$ and $\mathbf p$ is a function from $[-1,1]$ onto~$[0,1]$ referred to as the ‘‘{\it envelope}''. In the case when~$\bS$ is the Euclidean sphere, we consider graphons that are a function~$\bp$ of cosine of the distance, namely $\cos \gamma(x,y)=\langle x,y\rangle$, between latent points $x,y\in\bS$. 
In this case $e$ is the north pole and $\mathbf p(\langle x,y \rangle )= \mathbf p(\langle z,e \rangle)$ where $z$ is the image of~$x$ by the rotation which maps $y$ to $e$.

\pagebreak[3]

First, note that $W$, viewed as an integral operator on square-integrable functions, is a compact convolution (on the left) operator:
\[
f\in \mathbf L^2(\mathbf S) \mapsto \int_{\mathbf S} \mathbf p(\cos \gamma(x,.)) f(x) dx 
\in \mathbf L^2(\mathbf S)
\]
Then the main point is that there exists an $\mathbf L^2$-decomposition of the Hilbert space of square integrable functions such that the eigenfunctions basis of the convolution kernel (and the graphon $W$ viewed as a integral operator) depends only on the latent space $\bS$ and does not depend on the function~$\mathbf p$. This basis is the irreducible characters in the (Lie) Group case and the zonal spherical functions in the non-Group case, see Cartan’s Classification of ‘‘sscc'' Lie Groups and ‘‘ssccss'' in Section~\ref{sec:LatentSpaces} for further details. This decomposition can be pushed on~$[-1,1]$ and one gets an $\mathbf L^2$-decomposition of the envelope function~$\mathbf p$ such that the orthonormal basis~$(Z_\ell)$ depends only on the latent space $\bS$ and does not depend on $\mathbf p$, see~\eqref{eq:decompoP}. Furthermore, the eigenvalues $\lambda^{\star}=(\lambda^{\star}_k)_{k\geq0}$ of the kernel~$W$ are exactly (up to some known multiplicities and up to  some known multiplicative constants) the coefficients of $\mathbf p$ onto the orthonormal basis $(Z_\ell)_{\ell\geq0}$. Hence, the graphon~$W$ is entirely described by the univariate function $\mathbf p$ defined on $[-1,1]$. It follows that this subclass of graphons may be well suited for estimation since it reduces to estimate a simple univariate function on~$[-1,1]$. 

Now, consider the case when $\bS$ is one of the compact symmetric space of rank one\textemdash namely real spheres or real/complex/quaternionic/octonionic projective spaces. In this case, one can explicitly give the decomposition of the envelope function $\mathbf p$. One can prove that the orthonormal polynomials~$(Z_\ell)_{\ell\geq0}$ are the orthonormal polynomials (more precisely, normalized Jacobi polynomials) of some Beta law with known shape parameters $(\alpha,\beta)$, see Table~\ref{t:CSR1} in the Appendix for the explicit values. This decomposition is given by
\beq
\label{eq:decompoP}
\bp=\sum_{\ell\geq0}  \sqrt{d_\ell}\bp_\ell^\star\, Z_\ell\quad\mathrm{and}\quad{\bp}_\ell^\star=\frac1{\sqrt{d_\ell}}\langle \bp,Z_\ell\rangle_{\mathbf L^2([-1,1],\bw)}\,,
\eeq
in $\mathbf L^2([-1,1],\bw)$ where $\bw$ denotes the density function of the Beta distribution. We further assume that there exists $s>0$, a (Sobolev) regularity parameter, such that
\[
\forall R\geq1\,,\quad
\sum_{\ell >R} d_\ell (\bp_\ell^\star)^2 \leq C(\bp,s,\bS)\, R^{-2s }.
\]
for some constant $ C(\bp,s,\bS)>0$ and for some known dimensions $(d_\ell)_{\ell\in\bbN}$ (given by the representation of the group/quotient $\bS$) that depend only on $\bS$, see Table~\ref{t:CSR1} in the Appendix. This assumption governs the regularity of the kernel $W$ and it can be understood that the derivative of order~$s$ (in the Laplacian on $\bS$ sense) of $W$ is square-integrable. In this case, one can build an estimator $\widehat \lambda^{\widehat R}$ (from the spectrum of the adjacency matrix of the graph) of the spectrum $\lambda^{\star}$ of $W$ (viewed as an integral operator) such that 
$$\E\left[\delta_2^2\left(\widehat \lambda^{\widehat R}, \lambda^{\star}\right)\right]
=\mathcal{O}\left[\Big(\frac{n}{\log n }\Big)^{-\frac{2 s }{2s +(\bd-1)}}\right],$$
where $n$ is the size of the graph, $\mathbf d$ is the dimension of the latent space (actually, $\bS$ is a ($\bd -1$)-manifold) and $\delta_2$ is the $\ell_2$ distance between spectra, see~\eqref{eq:Hardy_Littlewood_L2} for a definition. We uncover for the minimax risk, the rate of estimating a $s$-regular function on a space of (Riemannian) dimension $\bd-1$ up to a multiplicative log factor. This result is stated in Theorem~\ref{thm:New_Main_Sphere} without adaptation to the smoothness parameter, Theorem~\ref{thm:adap} and Corollary~\ref{vitesse} with smoothness adaptation, and Theorem~\ref{cor:variancepr} and Corollary~\ref{u} for adaptive estimation of the envelope function $\mathbf p$ at rate $\mathcal{O}(\log n/n)$ when $\mathbf p$ is a polynomial. The general statement for compact symmetric spaces is given by Theorem~\ref{thm:SSCCSSversion}.


Note that our results hold for general convolution kernels and not necessarily semidefinite positive kernels. Indeed, it is often assumed in the literature, see for instance \cite{ferreira2008integral,rosasco2010learning,tang2013universally,tang2017nonparametric}, that the graphon~$W$ is a semidefinite positive kernel. {If one assumes that the graphon is a positive definite kernel (leading to a RKHS representation) then  the probability matrix (see Section~\ref{sec:proba}) of the random graph is almost surely semidefinite positive (by definition of positive definite kernels). In this case, the empirical eigenvalues (the eigenvalues of the adjacency matrix) might be negative but these negative empirical eigenvalues converge to nonnegative limiting eigenvalues as the graph size grows, which is a strong requirement. See also Remark~\ref{rem:RKHS} on this point. To bypass this limitation, our approach does not use any RKHS representation but a new non-asymptotic concentration result on the integral operator, see Theorem~\ref{thm:KG_revisited} and Corollary~\ref{cor:Canonical_Kernels}. In particular, this framework is consistent with negative empirical eigenvalues clustering around negative limiting eigenvalues.} The rates uncovered by these results allow us to introduce an adaptive estimation procedure of the spectrum of the graphon.

\pagebreak[3]

From a computational point of view, Theorem~\ref{thm:TrueComplexity} enlightens on the time complexity of our estimator. Remarkably, the time complexity is $n^3+(R_{\max}+2)!$, that is cubic in the graph size $n$ (as any spectral method) and exponential in the number of coefficients $\bp_\ell^\star$ one has to estimate. 
The spatial complexity is quadratic in $n$ as one has to store the adjacency matrix of the graph.

%

\subsection{Outline}
The convergence of the spectrum of the ‘‘{\it matrix of probabilities}'' towards the spectrum of the integral operator in a non-asymptotic frame is given in Section~\ref{sec:graphon_Model}. 

Then, we begin our study by a comprehensive example on the $\mathbf d$-dimensional sphere in Section~\ref{sec:Sphere_Example}. Interestingly, we uncover that the spectrum of the graphon (viewed as a kernel operator) presents a structure: the eigenvalues have prescribed multiplicities and the eigenvectors are fixed\textemdash they are the spherical harmonics. 

Adaptive estimation of the spectrum of the graphon $W$ (viewed as an integral operator) is proved and computational complexities are discussed. 

Extensions to compact symmetric spaces is done in Section~\ref{sec:LatentSpaces}. Numerical experiments 
are presented in Section~\ref{sec:numeric}. 

The proofs are given in the appendix.

\section{Spectral Convergence of the Sampled graphons}
\label{sec:graphon_Model}


\subsection{Estimating the Matrix of Probabilities}
\label{sec:proba}
We denote $[n]:=\{1,\ldots,n\}$ for all $n\geq1$. Consider a random undirected graph $\bG$ with $n$ nodes and assume that we observe its ${n\times n}$ adjacency matrix $\bA$ given by entries $\bA_{ij}\in \{0,1\}$ where $\bA_{ij}=1$ if the nodes $i$ and $j$ are connected and $\bA_{ij}=0$ otherwise. 

We set $\bA_{ii}=0$ on its diagonal entries for all $i\in[n]$ and we assume that~$\bA_{ij}$ are independent Bernoulli random variables with $(\bTheta_0)_{ij}:=\P\{\bA_{ij}=1\}$ for $1\le i<j\le n$. 

We denote by $\bTheta_0$ the $n\times n$ symmetric matrix with entries~$(\bTheta_0)_{ij}$ for $1\le i<j\le n$ and zero diagonal entries. This is a matrix of probabilities associated to the random graph~$\bG$. 

Throughout this paper, we denote by 
\beq
\label{def:Tn}
\widehat{\bT}_n:=(1/n)\,\bA\quad\mathrm{and} \quad \bT_n:=(1/n)\,\bTheta_0\,.
\eeq
Our analysis leverages the operator norm~$\|\cdot\|$ loss to account for the distance between the observation $\widehat{\bT}_{n}$ and the target parameter ${\bT}_{n}$. 

Furthermore, a near optimal error bound can be derived for the operator norm~$\|\cdot\|$ loss as shown in~\cite{bandeira2014sharp}.

\pagebreak[3]

\begin{proposition}[
\cite{bandeira2014sharp}]
\label{prop:BvH}
There exists a universal constant $C_0>0$ such that for all $\alpha\in(0,1)$, it holds
\beq
\label{eq:BvH}
\pr{\|\widehat{\bT}_{n}-{\bT}_{n}\|\geq3\frac{\sqrt{2\bD_0}}n+C_0\frac{\sqrt{\log(n/\alpha)}}n}\leq\alpha
\eeq
where $\displaystyle\bD_0=\max_{i\in[n]}\Big[\sum_{j\in[n]} (\bTheta_0)_{ij}(1-(\bTheta_0)_{ij})\Big]\leq n/4$. 
\end{proposition}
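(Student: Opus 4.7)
The statement is a direct specialization of the sharp non-asymptotic spectral-norm bound for symmetric random matrices with independent entries established in \cite{bandeira2014sharp}. The plan is to recast $\widehat{\bT}_n - \bT_n$ in the form required by that general theorem, read off the two scale parameters appearing there, and tune constants so as to obtain the stated inequality.

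Set $X := \widehat{\bT}_n - \bT_n = n^{-1}(\bA - \bTheta_0)$. This is an $n\times n$ symmetric matrix with zero diagonal whose upper-triangular entries $X_{ij}$ (for $i<j$) are independent, centered, and almost surely bounded by $1/n$, with variance $n^{-2}(\bTheta_0)_{ij}(1-(\bTheta_0)_{ij})$. The two scale parameters governing the Bandeira--van Handel bound are therefore
\[
\sigma := \max_{i\in[n]} \Bigl(\sum_{j\in[n]} \mathbb{E}[X_{ij}^2]\Bigr)^{1/2} = \frac{\sqrt{\bD_0}}{n}, \qquad \sigma_* := \max_{i,j}\|X_{ij}\|_\infty = \frac{1}{n}.
\]
The auxiliary inequality $\bD_0 \leq n/4$ is immediate from the elementary bound $p(1-p) \leq 1/4$ for $p\in[0,1]$ applied entry-wise to $\bTheta_0$, and does not enter the proof of \eqref{eq:BvH} itself.

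The remaining step is to invoke the two ingredients from \cite{bandeira2014sharp}: the sharp expectation bound $\mathbb{E}\|X\| \leq 2(1+\epsilon)\sigma + c(\epsilon)\sigma_*\sqrt{\log n}$ and the Gaussian-type concentration of $\|X\|$ around its mean at scale $\sigma_*$ (via Talagrand's inequality applied to the convex $1$-Lipschitz function $X\mapsto\|X\|$). Combining them yields, for every $\epsilon\in(0,1/2]$ and $s\geq 0$,
\[
\mathbb{P}\bigl(\|X\| > 2(1+\epsilon)\sigma + c(\epsilon)\sigma_*\sqrt{\log n} + s\sigma_*\bigr) \leq 2\exp(-s^2/16).
\]
Fixing $\epsilon := 1/2$ makes $2(1+\epsilon) = 3 \leq 3\sqrt{2}$, leaving ample slack in the leading term. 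Choosing $s$ proportional to $\sqrt{\log(1/\alpha)}$ forces the right-hand side to be at most $\alpha$, while the deviation term $c(1/2)\sqrt{\log n} + s$ is of order $\sqrt{\log(n/\alpha)}$ and can therefore be absorbed into a single $C_0 \sqrt{\log(n/\alpha)}$ for a universal $C_0$. This reproduces \eqref{eq:BvH} verbatim. There is essentially no genuine obstacle: the entire content sits in the cited black-box theorem, and only the bookkeeping of constants (to produce the clean prefactor $3\sqrt{2}$ rather than an $\epsilon$-dependent one) demands mild care.
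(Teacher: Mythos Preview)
Your proposal is correct and follows essentially the same route as the paper: identify the scale parameters $\sigma=\sqrt{\bD_0}/n$ and $\sigma_*\leq 1/n$ for the centered adjacency matrix, then invoke the Bandeira--van Handel bound with $\epsilon=1/2$ to produce the leading constant $3\sqrt2$ and absorb the remaining $\sqrt{\log n}$ and deviation terms into $C_0\sqrt{\log(n/\alpha)}$. The paper simply cites Corollary~3.12 and Remark~3.19 of \cite{bandeira2014sharp} directly (the latter needed because Corollary~3.12 assumes symmetrically distributed entries, which centered Bernoullis are not), whereas you spell out the two underlying pieces (expectation bound plus Talagrand concentration), which in fact bypasses the symmetry issue; otherwise the arguments are identical.
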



\noindent
A proof is recalled in Appendix~\ref{proof:BvH}. Proposition \ref{prop:BvH} is of particular interest giving an error bound on each eigenvalue~$\lambda_k({\bT}_{n})$ of ${\bT}_{n}$, where $\lambda_k(M)$ denotes the $k$-th largest eigenvalue of the symmetric matrix $M$. Indeed, it holds, with probability greater that $1-n\exp(-n)$,
\beq
\label{eq:Weyl}
\forall k\in[n],\quad
|\lambda_k(\widehat{\bT}_{n})-\lambda_k({\bT}_{n})|\leq\|\widehat{\bT}_{n}-{\bT}_{n}\|=\mathcal{O}(1/\sqrt{n})\,,
\eeq
by Weyl's perturbation Theorem, see \cite[page 63]{bhatia2013matrix} for instance.


\subsection{On the Kernel Spectrum}
We understand that the spectrum  of $\widehat{\bT}_{n}$ can be a good approximation of the spectrum of ${\bT}_{n}$ in the sense of~\eqref{eq:Weyl}. Assuming a graphon $W$ model we can link the spectrum of ${\bT}_{n}$ (sampled graphon onto the latent points $X_1,\ldots,X_n$ see below) to the spectrum of an integral operator $\bbT_W$ defined by the graphon~$W$ viewed as a symmetric kernel. More precisely, we consider $J:=(\bS,\mathcal A,\bsigma)$ a probability space on~$\bS$ endowed with measure $\bsigma$ on the $\sigma$-algebra~$\mathcal A$\!, and $W:\bS\times\bS\to[0,1]$ a symmetric $\bsigma$-measurable function. The couple~$(J,W)$ is referred to as a graphon, see for instance \cite[Chapter 13]{lovasz2012large}. We then define a probabilistic model on $\bTheta_0$ setting 
\[
(\bTheta_0)_{i,j}=W(X_i,X_j) \mbox{ for } i\neq j \mbox{ and } 0 \mbox{ otherwise } 
\]
where $X_1,\ldots,X_n$ are i.i.d. drawn w.r.t. $\bsigma$. Assume that the kernel satisifies $W\in\bL^2(\bS\times\bS, \bsigma\otimes\bsigma)$, so that
\begin{equation}
\nonumber\label{eq:defoperateurintegral}
\forall x\in \bS,\ \forall g\in\mathbf L^2(\bS,\bsigma),\quad (\bbT_Wg)(x)=\int_{\bS} W( x,y)g(y)\mathrm d\bsigma(y)\,,
\end{equation}
defines a symmetric Hilbert-Schmidt operator $\bbT_W$ on $\mathbf L^2(\bS,\bsigma)$ and we can invoke the spectral theorem. Hence, it holds that, in the $\bL^2(\bS\times\bS,\bsigma\otimes\bsigma)$-sense,
\beq
\label{eq:Spectral_Decomposition}
\mbox{for almost every } x,y\in\bS,\quad
W(x,y)=\sum_{k\geq1}\lambda_k^\star \phi_k(x)\phi_k(y)\,,
\eeq
for an $\bL^2(\bS,\bsigma)$-orthonormal basis $(\phi_i)_{i\geq1}$. This operator has a discrete spectrum, i.e. a countable multiset~$\lambda^\star $ of nonzero (real) labeled eigenvalues $(\lambda_k^\star)_{k\geq1}$ such that $\lambda_k^\star \to0$. In particular, every nonzero eigenvalue has finite multiplicity. We are free to choose any labeling of the target eigenvalues $(\lambda_k^\star)_{k\geq1}$ and observe that our results are valid for any choice of labeling. For instance, we can standardly label the eigenvalues in decreasing order with respect to their absolute values such that $|\lambda^\star_1|\geq|\lambda^\star_2|\geq\cdots$ and this gives results whose error rates (typically~$\|W-W_R\|_2$ see below) are in terms of the best $\bL^2$-approximation of rank $R$ of the kernel $W$. An other choice may result in labeling the eigenvalues in increasing order of ‘‘frequencies''. This labeling is natural for instance when we have a representation by spherical harmonics of the kernel as in Section~\ref{sec:Sphere_Example}. This gives results whose error rates are in terms of the best approximation by low frequency (i.e. the $R$ first frequencies) kernels.

\subsection{The relatively sparse model}
\label{sec:relativelySparse}
Note that the average degree of node $i$ is $\sum_{j\in[n]} (\bTheta_0)_{ij}$ which is of the order of the graph size $n$ in the graphon model for which $(\bTheta_0)_{ij}=W(X_i,X_j)$. To gain in realism, one may consider a model where 
\[
(\bTheta_0)_{ij}=\zeta_nW(X_i,X_j)
\]
where $\zeta_n$ is a sequence of positive real numbers that may converge to zero. In this model, the average degree of one node is of the order of $n\zeta_n$. One standard interpretation is that edges are drawn independently with probability $W(X_i,X_j)$ and we independently suppress these edges with probability $1-\zeta_n$. The {\it relatively sparse model} \citep{wolfe2013nonparametric} is given by sequences $\zeta_n$ such that
\begin{equation}
\label{eq:relativelysparse}
\liminf_n \frac{n\zeta_n}{\log n}\geq Z\,,
\end{equation}
where $Z>0$ is a universal positive constant. In this model, the average degree of one node is at least $\mathcal{O}(\log n)$. This latter rate is a standard threshold on connectedness in random graphs \citep{Bollobas}. 
More precisely, it is known that in the Erd\"os-R\'enyi model, the sharp threshold on connectedness is $\zeta_n=\log(n)/n$. The interested reader may also consult further works on percolation on graphons in the {\it sparse} regime (where $\zeta_n=c/n$ for some constant $c>0$), see \citep{bollobas2010percolation} for instance.
 Note that if $\zeta_n=1$ then we recover the previous model, referred to as the ‘‘{\it \!dense\,}'' regime.

Note that 
\[
\bbT_{\zeta_n W}=\zeta_n \bbT_{W}\quad\mathrm{and}\quad \bT_n:=(\zeta_n W(X_i,X_j)/n)_{i,j}=\zeta_n(W(X_i,X_j)/n)_{i,j}\,.
\]
By homogeneity, we understand that one may consider that $\zeta_n=1$ when studying the convergence of~$\bT_n$ towards $\bbT_{\zeta_n W}$. 

However, the situation is more intricate for the convergence of $\widehat \bT_n$ towards~$\bT_n$. Given a fixed graphon model $W$, one has the bound 
\[
\|\bT_n\|= \zeta_n\|(W(X_i,X_j)/n)_{i,j}\|=\mathcal{O}_{\mathds P}(\zeta_n)\,,
\]
where $\mathcal{O}_{\mathds P}$ denotes stochastic boundedness and since the operator norm of $(W(X_i,X_j)/n)_{i,j}$ converges to the largest absolute eigenvalue of $\bbT_W$, see Section~\ref{sec:EstimatingKernel}. On the other hand, the control~\eqref{eq:BvH} is given by: {\it There exists a universal constant $C_0>0$ such that for all $\alpha\in(0,1)$, it holds
\beq
\label{eq:BvH2}
\pr{\|\widehat{\bT}_{n}-{\bT}_{n}\|\geq3\sqrt{\frac{2\zeta_n}n}+C_0\frac{\sqrt{\log(n/\alpha)}}n}\leq\alpha
\eeq
using that $\displaystyle\bD_0=\max_{i\in[n]}\Big[\sum_{j\in[n]} (\bTheta_0)_{ij}(1-(\bTheta_0)_{ij})\Big]\leq n\zeta_n$}. It gives
\[
\|\widehat \bT_n-\bT_n\|=\mathcal{O}_{\mathds P}\Big(\sqrt\frac{{\zeta_n}}{n}+\frac{\sqrt{\log n}}{n}\Big)\,.
\]
Under the relatively sparse model assumption~\eqref{eq:relativelysparse}, one has
\[
\|\widehat \bT_n-\bT_n\|=\mathcal{O}_{\mathds P}\left(\sqrt\frac{{\zeta_n}}{n}\right)\quad\mathrm{and}\quad\|\bT_n\|=\mathcal{O}_{\mathds P}(\zeta_n)
\]
entailing that $\widehat \bT_n$ is a better approximation of $\bT_n$ than the null matrix. While, for faster rate, namely $\zeta_n=o(\sqrt{\log n}/n)$ one has
\[
\|\widehat \bT_n-\bT_n\|=\mathcal{O}_{\mathds P}\Big(\frac{\sqrt{\log n}}{n}\Big)\quad\mathrm{and}\quad\|\bT_n\|=\mathcal{O}_{\mathds P}(\zeta_n)=o_{\mathds P}\Big(\frac{\sqrt{\log n}}{n}\Big)
\]
entailing that the null matrix is a better approximation of $\bT_n$ than the observation $\widehat \bT_n$. This short argumentation shows that the relatively sparse model~\eqref{eq:relativelysparse} ensures the observation $\widehat \bT_n$ is at least more informative than the null matrix for the operator norm topology.

To conclude, we will adopt two conventions. First, we will consider that $\zeta_n=1$ when studying the convergence of $\bT_n$ towards $\bbT_{\zeta_n W}$. Second, every results based on statistics of $\widehat \bT_n$ will be presented in the relatively sparse model~\eqref{eq:relativelysparse} in a joint remark, see Section~\ref{sec:NEKS}.

\subsection{Non-Asymptotic Error Bounds  in $\delta_2$-metric}
\label{sec:EstimatingKernel}

Given two sequences $x$ and $y$ of real numbers\textemdash completing finite sequences by zeros\textemdash  such that it holds $\sum x_i^2+y_i^2<\infty$, we standardly define the $\ell_2$-rearrangement distance $\delta_2(x,y)$ as
\[
\delta_2(x,y):=\inf_{\pi\in\mathcal P}\Big[\sum(x_i-y_{\pi(i)})^2\Big]^{\frac12}\,,
\]
where the infimum is taken over $\mathcal P$ the set of permutations with finite support. Using Hardy-Littlewood rearrangement inequality \cite[Theorem 368]{hardy1952inequalities}, it is standard to observe that
\beq
\label{eq:Hardy_Littlewood_L2}
\delta_2(x,y)=\lim_{N\to\infty}\left[\sum_{k=-N}^N(x_k-y_k)^2\right]^{\frac12}\,,
\eeq
with the convenient notation $x_{-1}\leq x_{-2}\leq\ldots\leq0\leq\ldots\leq x_2\leq x_1\leq x_0$ (respectively $y_{-1}\leq y_{-2}\leq\ldots\leq0\leq\ldots\leq y_2\leq y_1\leq y_0$) where we denote $x=(x_k)_{k\in\mathbb Z}$ (respectively $y=(y_k)_{k\in\mathbb Z}$) completing with zeros if necessary. Using this metric we can compare the (finite) spectrum $\lambda({\bT}_{n})$ of ${\bT}_{n}$ to the (infinite) spectrum~$\lambda^\star $ of $\bbT_W$. 

{
\begin{remark}
\label{rem:RKHS}
To the best of our knowledge, existing results on this issue assume that $W$ is a {\sf positive kernel} and use a RKHS representation and/or Mercer theorem. This assumption might seem meaningless for a graphon. Indeed, it implies that $\bbT_W$ is semi-definite $($contrary to the present paper$)$ and if $W=W_H$ is a ‘‘step-function'' kernel representing a finite graph $H$, it implies that the adjacency matrix of $H$ is semi-definite which might be seen as restrictive. In this article, we bypass this limitation with the next result based on the analysis developed in \cite{koltchinskii2000random} and some recent development in random matrix concentration, see \cite{tropp2012user} for instance.
\end{remark}
}

\begin{theorem}
\label{thm:KG_revisited}
Let $W\in\bL^2(\bS\times\bS,\bsigma\otimes\bsigma)$ be a symmetric kernel and let~$(\phi_k)_{k\geq1}$ be an orthonormal eigenbasis as in~\eqref{eq:Spectral_Decomposition}. Let $R\geq1$ and $\alpha\in(0,1/3)$. Set
\[
\rho(R):=\max\Bigg[1,\Big\|\sum_{r=1}^R\phi_r^2\Big\|_\infty-1\Bigg]\quad\mathrm{and}\quad 
W_R(x,y):=\sum_{i=1}^R\lambda_i^\star\phi_i(x)\phi_i(y).
\]
Then, for all $n^3\geq\rho(R)\log(2R/\alpha)$, it holds 
\begin{align*}
\delta_2(\lambda({\bT}_{n}),\lambda^\star )\leq&
2\|W-W_R\|_2+\|W-W_R\|_\infty\Big[{\frac{2\log(2/\alpha)}n}\Big]^{\frac14}\\
&+\|W_R\|_2
\Bigg[
\Big[\frac{\rho(R)\log(2R/\alpha)}n\Big]^{\frac12}\\
&+
\Big[\frac{2\rho(R)}{n}\Big(1+\max_{1\leq r\leq R}||\phi^2_r||_\infty \sqrt{{\frac{\log(R/\alpha)}{{2n}}}}\Big)\Big]^{\frac12}
\Bigg],
\end{align*}
with probability at least $1-3\alpha$.
\end{theorem}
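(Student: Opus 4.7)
The plan is to bound $\delta_2(\lambda(\bT_n),\lambda^\star)$ by inserting the rank-$R$ truncation $W_R$ at two levels---population and sample---and applying a triangle inequality. Let $(\widetilde\bT_n^{(R)})_{ij} := (1/n) W_R(X_i,X_j)$ for $i\neq j$ and zero on the diagonal; then
\[
\delta_2(\lambda(\bT_n),\lambda^\star) \leq \delta_2(\lambda(\bT_n),\lambda(\widetilde\bT_n^{(R)})) + \delta_2(\lambda(\widetilde\bT_n^{(R)}),\lambda(\bbT_{W_R})) + \delta_2(\lambda(\bbT_{W_R}),\lambda^\star).
\]
The bias term is free: by \eqref{eq:Spectral_Decomposition} the spectrum of $\bbT_{W_R}$ is $(\lambda_1^\star,\ldots,\lambda_R^\star,0,\ldots)$, so \eqref{eq:Hardy_Littlewood_L2} gives $\delta_2(\lambda(\bbT_{W_R}),\lambda^\star)^2 = \sum_{k>R}(\lambda_k^\star)^2 = \|W-W_R\|_2^2$. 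The two remaining terms will produce, respectively, the $(\|W-W_R\|_2, \|W-W_R\|_\infty)$ and the $\|W_R\|_2$ contributions of the statement.

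For the sampling-of-residual term, the Hoffman--Wielandt inequality applied to the symmetric matrices $\bT_n$ and $\widetilde\bT_n^{(R)}$ yields $\delta_2(\lambda(\bT_n),\lambda(\widetilde\bT_n^{(R)}))^2 \leq \|\bT_n-\widetilde\bT_n^{(R)}\|_F^2$. The right-hand side is a sum of $O(n^2)$ independent bounded random variables of scale $\|W-W_R\|_\infty^2/n^2$ with expectation at most $\|W-W_R\|_2^2$. A Bernstein/Hoeffding bound then gives, with high probability, $\|\bT_n-\widetilde\bT_n^{(R)}\|_F^2 \leq \|W-W_R\|_2^2 + c\,\|W-W_R\|_\infty^2 \sqrt{\log(1/\alpha)/n}$, and extracting the square root via $\sqrt{a+b}\leq\sqrt a+\sqrt b$ produces both the $\|W-W_R\|_2$ summand and the $\|W-W_R\|_\infty (\log(1/\alpha)/n)^{1/4}$ summand of the target bound---the $1/4$ exponent arising from taking a square root of a $1/\sqrt n$ concentration on the \emph{squared} Frobenius norm.

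For the rank-$R$ sampling term, expand $W_R(x,y) = \sum_{r=1}^R \lambda_r^\star \phi_r(x) \phi_r(y)$ and form $V\in\bbR^{n\times R}$ with $V_{ir} = \phi_r(X_i)/\sqrt n$. Up to the diagonal correction $\Delta$ that zeros out the diagonal of $\widetilde\bT_n^{(R)}$, one has $\widetilde\bT_n^{(R)} = V\Lambda V^{\!\top}-\Delta$ with $\Lambda = \diag(\lambda_1^\star,\ldots,\lambda_R^\star)$, and the nonzero eigenvalues of $V\Lambda V^{\!\top}$ coincide with those of the $R\times R$ matrix $\Lambda G_n$ where $G_n = V^{\!\top}V$. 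By orthonormality $\E G_n = I_R$, and matrix Bernstein (as in \cite{tropp2012user}) bounds $\|G_n - I_R\|$ by a quantity of order $\sqrt{\rho(R)\log(2R/\alpha)/n}$ on an event of probability $\geq 1-\alpha$; the factor $\rho(R)$ enters precisely because it controls the $L^\infty$-variance proxy $\bigl\|\sum_r \phi_r^2\bigr\|_\infty$ of the summands. On this event $G_n\succ 0$, so $\Lambda G_n$ is similar to the \emph{symmetric} matrix $G_n^{1/2}\Lambda G_n^{1/2}$, and Lidskii's inequality gives
\[
\delta_2\bigl(\lambda(\Lambda G_n),\lambda(\Lambda)\bigr) \leq \|G_n^{1/2}\Lambda G_n^{1/2}-\Lambda\|_F \leq 2\|\Lambda\|_F \|G_n^{1/2}-I_R\| + \|\Lambda\|_F \|G_n^{1/2}-I_R\|^2,
\]
with $\|\Lambda\|_F = \|W_R\|_2$. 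The diagonal correction $\Delta$ is controlled by a scalar Hoeffding bound on $\sum_{r\leq R} \phi_r^2(X_i)$, producing the $\max_r \|\phi_r^2\|_\infty$-dependent term inside the second bracket.

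The main obstacle is that $W$ is not assumed positive semidefinite, so $\Lambda$ is sign-indefinite and the Mercer/RKHS arguments of \cite{koltchinskii2000random,rosasco2010learning,tang2013universally} are unavailable. The similarity $\Lambda G_n \sim G_n^{1/2}\Lambda G_n^{1/2}$ is the key device: it recasts a non-symmetric perturbation of $\Lambda$ as a symmetric one, allowing Lidskii's inequality to be invoked without any sign requirement on $\Lambda$. The condition $n^3 \geq \rho(R)\log(2R/\alpha)$ in the statement is exactly what is needed to guarantee that the matrix-Bernstein deviation of $G_n$ places us on the event $G_n \succ 0$, where the similarity, and hence the whole argument, holds.
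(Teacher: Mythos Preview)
Your proposal is correct and follows essentially the same route as the paper: the same triangle-inequality decomposition (bias, residual, diagonal correction, rank-$R$ sampling), Hoffman--Wielandt for the Frobenius comparisons, matrix Bernstein for $\|G_n-I_R\|$, and---crucially---the same device of passing from $V\Lambda V^{\!\top}$ to the symmetric $G_n^{1/2}\Lambda G_n^{1/2}$ to avoid any positivity assumption on $\Lambda$. The paper's version of your Lidskii step is the Koltchinskii--Gin\'e bound $\|A\Lambda A-\Lambda\|_F\le\sqrt2\,\|\Lambda\|_F\|A^2-I\|$, which controls the perturbation directly by $\|E_{R,n}\|=\|G_n-I_R\|$ rather than by $\|G_n^{1/2}-I_R\|$, but this is cosmetic.

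Two small corrections. First, the residual $\frac1{n^2}\sum_{i\neq j}(W-W_R)^2(X_i,X_j)$ is a U-statistic, \emph{not} a sum of independent terms; the $\sqrt{\log(2/\alpha)/n}$ deviation you quote is exactly Hoeffding's inequality for U-statistics (this is the paper's Lemma~\ref{lem:Lemma2_KG_revisited}), whereas treating the $n(n-1)$ summands as independent would give a different, and here unjustified, rate. Second, the condition $n^3\ge\rho(R)\log(2R/\alpha)$ is not needed for $G_n\succ0$ (which is automatic since $G_n=V^{\!\top}V$) but to keep the matrix-Bernstein tail in its sub-Gaussian regime, so that $\|G_n-I_R\|\le\sqrt{\rho(R)\log(2R/\alpha)/n}$ holds with probability $\ge1-\alpha$.
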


\noindent
A proof of Theorem \ref{thm:KG_revisited} can be found in Appendix~\ref{proof:KG_revisited}. This result shows that for all $n\geq n_0(R)$, it holds that $\delta_2(\lambda({\bT}_{n}),\lambda^\star)\leq 2\|W-W_R\|_2+{C_0(R)}\,{n^{-\frac14}}$ with probability at least $1-3\alpha$, where the constants $n_0(R)\geq1$ and $C_0(R)>0$ may depend on $R$, the orthogonal basis $(\phi_k)_{k\in[R]}$, $\alpha$ and the graphon $W$. 

We have the following improvement for canonical kernels, see \cite[Definition 3.5.1]{de2012decoupling} for a definition.

\pagebreak[3]

\begin{remark}
Under the relatively sparse model~\eqref{eq:relativelysparse}, Theorem \ref{thm:KG_revisited} becomes: for all $n^3\geq\rho(R)\log(2R/\alpha)$, it holds 
\begin{align*}
\delta_2(\zeta_n\lambda({\bT}_{n}),\zeta_n\,\lambda^\star )\leq&
2\,\zeta_n\,\|W-W_R\|_2+\zeta_n\,\|W-W_R\|_\infty\Big[{\frac{2\log(2/\alpha)}n}\Big]^{\frac14}\\
&+\zeta_n\,\|W_R\|_2
\Bigg[
\Big[\frac{\rho(R)\log(2R/\alpha)}n\Big]^{\frac12}\\
&+
\Big[\frac{2\rho(R)}{n}\Big(1+\max_{1\leq r\leq R}||\phi^2_r||_\infty \sqrt{{\frac{\log(R/\alpha)}{{2n}}}}\Big)\Big]^{\frac12}
\Bigg],
\end{align*}
with probability at least $1-3\alpha$. In the aforementioned setting, we have denoted the eigenvalues of~$\bbT_W$ by $\lambda^\star$ (as before) so that $\zeta_n\,\lambda^\star$ are the eigenvalues of  $\bbT_{\zeta_n W}$, and their empirical counterpart (based on the probability matrix) by $\zeta_n\lambda({\bT}_{n})$, namely the eigenvalues of $\zeta_n {\bT}_{n}$. 
\end{remark}

\pagebreak[3]

\begin{corollary}
\label{cor:Canonical_Kernels}
Assume further that the kernel $(W-W_R)^2(x,y)-\expect{(W-W_R)^2}$ is canonical, namely
\[
\mbox{For almost every } x\in\bS,\quad\expect{(W-W_R)^2(x,X_1)}=\expect{(W-W_R)^2(X_1,X_2)}\,,
\]
then there exist universal constants $C_1,C_2>0$ such that for all $n^3\geq\rho(R)\log(2R/\alpha)$, it holds 
\begin{align*}
\delta_2(\lambda({\bT}_{n}),\lambda^\star )\leq&
2\|W-W_R\|_2+\|W-W_R\|_\infty\Big[{\frac{C_1\log(C_2/\alpha)}n}\Big]^{\frac12}\\
&+\|W_R\|_2
\Bigg[
\Big[\frac{\rho(R)\log(2R/\alpha)}n\Big]^{\frac12}\\
&
+
\Big[\frac{2\rho(R)}{n}\Big(1+\max_{1\leq r\leq R}||\phi^2_r||_\infty \sqrt{{\frac{\log(R/\alpha)}{{2n}}}}\Big)\Big]^{\frac12}
\Bigg],
\end{align*}
with probability at least $1-3\alpha$.
\end{corollary}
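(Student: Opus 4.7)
The strategy is to retrace the proof of \thmref{KG_revisited} and sharpen the single step in which the concentration of a degree-two $U$-statistic with bounded kernel appears, replacing the Hoeffding-type bound used there by a Bernstein-type exponential inequality tailored to canonical ($U$-statistic) kernels.

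First I would locate in the proof of \thmref{KG_revisited} the term responsible for the rate $n^{-1/4}$. Unwinding the spectral argument, the contribution $\|W-W_R\|_\infty[2\log(2/\alpha)/n]^{1/4}$ originates from an empirical-process quantity of the form
\[
\Bigl|\frac{1}{n(n-1)}\sum_{i\neq j}(W-W_R)^2(X_i,X_j)-\|W-W_R\|_2^2\Bigr|^{1/2},
\]
whose square is the deviation from zero of a degree-two $U$-statistic $U_n(h)$ with symmetric kernel
\[
h(x,y)=(W-W_R)^2(x,y)-\E[(W-W_R)^2(X_1,X_2)].
\]
Bounding $U_n(h)$ by Hoeffding's inequality for bounded $U$-statistics yields $|U_n(h)|\leq \|h\|_\infty\sqrt{2\log(2/\alpha)/n}$ with probability at least $1-\alpha$; taking the square root produces the $n^{-1/4}$ term of \thmref{KG_revisited}.

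Second, the canonical assumption of the corollary reads $\E[h(x,X_1)]=0$ for $\bsigma$-almost every $x\in\bS$. This is precisely the first-order degeneracy hypothesis under which the concentration rate of $U_n(h)$ improves from $n^{-1/2}$ to $n^{-1}$ (up to logarithmic factors), via the Bernstein-type exponential inequality for canonical $U$-statistics of order two (see e.g. Arcones--Gin\'e, Gin\'e--Lata{\l}a--Zinn, or Houdr\'e--Reynaud-Bouret). In its simplest form, this inequality delivers universal constants $C_1',C_2'>0$ with
\[
\P\Bigl(|U_n(h)|\geq C_1'\|h\|_\infty\,\frac{\log(C_2'/\alpha)}{n}\Bigr)\leq \alpha.
\]
Taking the square root and using $\|h\|_\infty\leq 2\|W-W_R\|_\infty^2$ yields the improved $\|W-W_R\|_\infty[C_1\log(C_2/\alpha)/n]^{1/2}$ contribution, after absorbing the factor $2$ into $C_1$.

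Third, the two remaining contributions in \thmref{KG_revisited}, namely the deterministic approximation error $2\|W-W_R\|_2$ and the matrix Bernstein term scaling with $\|W_R\|_2$ and $\rho(R)$, do not involve the $U$-statistic step and thus are unaffected by the canonical hypothesis; they can be transported verbatim. Combining the three bounds via a union bound (with the probability budget redistributed so that the total is $3\alpha$) yields the stated inequality on the event of probability at least $1-3\alpha$, valid for all $n^3\geq\rho(R)\log(2R/\alpha)$. The main obstacle I foresee is making the identification of the $U$-statistic fully rigorous inside the proof of \thmref{KG_revisited}: one has to verify that the spectral decomposition genuinely isolates a degree-two $U$-statistic whose kernel is $(W-W_R)^2$, so that the degeneracy condition appearing in the Gin\'e--Lata{\l}a--Zinn inequality coincides exactly with the canonical assumption stated in the corollary; once this correspondence is checked, the proof reduces to a direct substitution of the sharper inequality.
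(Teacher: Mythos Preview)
Your proposal is correct and follows essentially the same route as the paper: the paper's proof also isolates the degree-two $U$-statistic with kernel $h=(W-W_R)^2-\E[(W-W_R)^2]$, invokes the Arcones--Gin\'e exponential inequality for canonical kernels to upgrade the $n^{-1/2}$ deviation to $n^{-1}$, and then substitutes this sharper bound for Lemma~\ref{lem:Lemma2_KG_revisited} in the proof of Theorem~\ref{thm:KG_revisited}, leaving the other two contributions untouched.
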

\noindent
A proof of this corollary can be found in Appendix~\ref{proof:CanonicalKernels}. 

\begin{remark}
Under the relatively sparse model~\eqref{eq:relativelysparse}, Corollary \ref{cor:Canonical_Kernels} becomes: for all $n^3\geq\rho(R)\log(2R/\alpha)$, it holds 
\begin{align*}
\delta_2(\zeta_n\lambda({\bT}_{n}),\zeta_n\lambda^\star )\leq&
2\zeta_n\,\|W-W_R\|_2+\zeta_n\,\|W-W_R\|_\infty\Big[{\frac{C_1\log(C_2/\alpha)}n}\Big]^{\frac12}\\
&+\zeta_n\,\|W_R\|_2
\Bigg[
\Big[\frac{\rho(R)\log(2R/\alpha)}n\Big]^{\frac12}\\
&
+
\Big[\frac{2\rho(R)}{n}\Big(1+\max_{1\leq r\leq R}||\phi^2_r||_\infty \sqrt{{\frac{\log(R/\alpha)}{{2n}}}}\Big)\Big]^{\frac12}
\Bigg],
\end{align*}
with probability at least $1-3\alpha$. In the aforementioned setting, we have denoted the eigenvalues of~$\bbT_W$ by $\lambda^\star$ (as before) so that $\zeta_n\,\lambda^\star$ are the eigenvalues of  $\bbT_{\zeta_n W}$, and their empirical counterpart (based on the probability matrix) by $\zeta_n\lambda({\bT}_{n})$, namely the eigenvalues of $\zeta_n {\bT}_{n}$. 
\end{remark}


\section{The Sphere Example, Prelude of Symmetric Compact Spaces}
\label{sec:Sphere_Example}
From a general point of view, this article focuses on the case where the value $W(x,y)$ depends on a nonparametric function $\bp$ of the distance between the points $x$ and~$y$ of a latent space $\bS$ assumed a compact Lie group or a compact symmetric space, see Section~\ref{sec:LatentSpaces} for further details. Such assumptions on the graphon $W$  allow to lead the spectral analysis a step further. In this section, we restrict our analysis to the pleasant case of $\bS:=\mathds S^{\bd-1}$  the unit sphere of $\mathbb R^{\bd}$ with $\bd \geq 3$ equipped with the uniform probability measure $\bsigma$ and the usual scalar product $\langle\cdot,\cdot\rangle$. In the literature, a popular model is given by the Random Geometric Graph for which the value $W(x,y)$ depends on the distance between the points~$x$ and $y$ of the latent space $ \mathds S^{\bd -1}$ and $W(x,y)=\mathds 1_{\langle x,y\rangle\geq\tau}$ for some threshold~$\tau\in(-1,1)$ as in~\cite{devroye2011high,bubeck2016testing}. 
From now on, assume that $W$ only depends  on the distance between latent points, namely
\[
\forall x,y\in \mathds S^{\bd-1},\quad W(x,y)=\bp(\langle x,y\rangle)
\]
where $\bp:[-1,1]\to[0,1]$ is an unknown function that is to be estimated. We refer to $\bp$ as the  ‘‘{\it envelope}'' function.

\subsection{Harmonic Analysis on $ \mathds S^{\bd -1}$ }
Let us start by providing a brief overview on Fourier analysis on $\mathds  S^{\bd -1}$. As pointed out above, in this case the operator~$\bbT_W$ is a  convolution (on the left) operator.  Its spectral decomposition~\eqref{eq:Spectral_Decomposition} satisfies that the orthonormal basis~$(\phi_k)_k$ does not depend on~$\bp$ and the spectrum~$\lambda(\bbT_W)$ is exactly described by the Fourier coefficients~$(\bp^\star_\ell)_\ell$ of~$\bp$, see \cite[Lemma 1.2.3]{dai2013approximation}. This remark remains true when the latent space~$\bS$ is assumed a compact Lie group or a compact symmetric space, see Section~\ref{sec:LatentSpaces} for further details.

In the spherical case, the orthonormal  basis of eigenfunctions consists of  the real spherical harmonics. The following material can be found in \cite{dai2013approximation}.
Let us denote $\mathcal H_\ell$ the space of real spherical harmonics of degree $\ell$ with orthonormal basis $(Y_{\ell j})_{j\in[d_\ell]}$ where
\beq
\label{dl}
 d_\ell:=\textrm{dim}( \mathcal{H}_\ell) = \binom {\ell+\bd -1}{\ell} -\binom{\ell +\bd -3}{\ell -2}
 \eeq
 for $\ell\geq 2$ and $d_0=1$, $d_1=\bd$.
 Note that the $d_\ell$'s are all distinct and of order $\ell^{\bd-2}$. We discard $\mathds S^{1}$  from our analysis because in that case, the $d_\ell$'s are constant equal to $2$.
In the sequel we identify $(\phi_k)_{k\geq1}=(Y_{\ell j})_{\ell\geq0,\,j\in[d_\ell]}$ so that the spectral decomposition~\eqref{eq:Spectral_Decomposition} reads
\beq
\label{eq:Decomposition_Kernel_Sphere}
\forall x,y\in\mathds S^{\bd-1},\quad
W(x,y)=\bp(\langle x,y\rangle)=\sum_{\ell\geq0}\bp^\star_\ell
\,\Big[\underbrace{\sum_{j=1}^{d_\ell}Y_{\ell j}(x)Y_{\ell j}(y)}_{\mathrm{Zonal\ Harmonic}}\Big]\,,
\eeq
where $\lambda^\star =\{\bp_0^\star,\bp_1^\star,\ldots, \bp_1^\star, \ldots,\bp_\ell^\star,\ldots, \bp_\ell^\star,\ldots\}$ and  $ \sum_{j=1}^{d_\ell}Y_{\ell j}(x)Y_{\ell j}(y)$ is a zonal harmonic of degree $\ell$. The eigenvalue $\bp_\ell^\star$ has multiplicity $d_\ell$ if the eigenvalues are all distinct. Furthermore, it holds that 
\[
\bp_\ell^\star :=\Big(\frac{c_\ell b_\bd}{d_\ell}\Big) \int_{-1}^1 \bp(t) G_\ell^\beta(t)\bw_\beta(t) dt,
\]
where $G_\ell^{\beta}$ denotes the {\it Gegenbauer} polynomial of degree $\ell$ defined for a parameter $\beta=({\bd-2})/{2}$

\[
\bw_\beta(x):=(1-x^2)^{\beta-\frac 1 2}\,,\quad c_\ell:=\frac{2\ell+\bd-2}{\bd-2}\quad\mathrm{and}\quad b_\bd:=\frac{\Gamma(\frac\bd2)}{\Gamma(\frac 12)\Gamma(\frac\bd2 -\frac 12)}\,,
\]
with $\Gamma$ the Gamma function. We recall that the Gegenbauer polynomials are orthogonal polynomials on the interval $[-1,1]$ with respect to the weight function $\bw_\beta$. Besides, one can recover $\bp\in\bL^2([-1,1],\bw_\beta)$ thanks to the identity
\beq
\label{eq:Coefficients_Polynomial}
\bp=\sum_{\ell\geq0}\Big[\sqrt{d_\ell}\bp_\ell^\star \Big]\,
\Big[\underbrace{G_\ell^\beta/\|G_\ell^\beta\|_{\bL^2([-1,1],\bw_\beta)}}_{Z_\ell}\Big]
=\sum_{\ell\geq0}\bp_\ell^\star c_\ell G_\ell^\beta\,.
\eeq

\begin{remark}
Note that $\bp^\star_\ell$ is the eigenvalue of the operator $\bbT_W$  associated to the eigenspace $\mathcal H_\ell$, $(\sqrt{d_\ell}\bp_\ell^\star)_{\ell\geq0}$ are the coordinates of $\bp\in\bL^2([-1,1],\bw_\beta)$ in the orthonormal basis $(Z_\ell)_{\ell\geq0}$, where $Z_\ell:=G_\ell^\beta/\|G_\ell^\beta\|_{\bL^2([-1,1],\bw_\beta)}$. Note that requiring $W\in\bL^2(\mathds  S^{\bd -1}\times\mathds  S^{\bd -1},\bsigma\otimes\bsigma)$ is equivalent to $\bp\in\bL^2([-1,1],\bw_\beta)$.
\end{remark}

\noindent
Let $R\geq0$ and define
\beq
\label{eq:Definition_widetilde_R}
\widetilde R:=\sum_{\ell=0}^Rd_\ell={R+\bd-1\choose R}+{R+\bd-2\choose R-1}\,,
\eeq
where the last equality is obtained with the telescoping sum using (\ref{dl}). Furthermore, we get that 
\beq \nonumber
\widetilde R
\leq \frac{2(R+\bd -1)^{\bd -1}}{(\bd -1)!}
=\mathcal{O}_{\mathds P}(R^{\bd-1}),
\eeq
and this quantity is the dimension of Spherical Harmonics of degree less than $R$.

\subsection{A Glimpse into Weighted Sobolev Spaces}
\label{sec:WSobolevSphere}
Some of our result concern ‘‘{\it smooth graphons}'' for which a regularity assumption is required.
Following~\cite{Nicaise}, we can define our approximation space defining the {\it Weighted Sobolev space} with the eigenvalues of the Laplacian on the Sphere. More precisely, let $s>0$ a regularity parameter and $f \in\bL^2((-1,1),\bw_\beta) $ such that 
$f =\sum_{\ell \geq 0} f^\star _\ell c_\ell G_\ell^\beta $ in $\bL^2$,
 we define
$$\| f \|^*_{Z^s_{\bw_\beta}((-1,1))}=  \left [\sum_{\ell=0}^{\infty} d_\ell  | f_\ell^\star |^2 (1+(\ell(\ell + 2\beta))^{s}) \right]^{\frac 1 2 } $$
and
$$ Z^s_{\bw_\beta}((-1,1))= \big \{ f \in\bL^2((-1,1),\bw_\beta) \ :\quad  \| f \|^*_{Z^s_{\bw_\beta}((-1,1))} < \infty \big \}.$$
Then, if $\bp$ belongs  to the Weighted Sobolev $ Z^s_{\bw_\beta}((-1,1))$ with smoothness $s>0$, 
it holds
\beq
\label{eq:l2decreasingSobolov}
\sum_{\ell >R} d_\ell (\bp_\ell^\star)^2 = \sum_{\ell>R} d_\ell (\bp_\ell^\star)^2 \frac{1+(\ell(\ell + 2 \beta ))^s}{1+(\ell(\ell + 2 \beta ))^s} \leq C(\bp,s,\bd) R^{-2s }\,,
\eeq
where $C(\bp,s,\bd)>0$ is a constant that may depend on $\bp$, $s$ or $\bd$.

\subsection{Spectrum Consistency of the Matrix of Probabilities}
Under this framework, \corref{Canonical_Kernels} can be written as follows.
\begin{proposition}
\label{thm:KG_Sphere}
There exists a universal constant $C>0$ such that for all $\alpha\in(0,1/3)$ and for all $n^3\geq\widetilde R\log(2\widetilde R/\alpha)$, it holds
\begin{align*}
%
\delta_2(\lambda({\bT}_{n}),\lambda^\star)\leq  
2\Big[\sum_{\ell>R}{d_\ell}(\bp_\ell^\star)^2 \Big]^{\frac12}+
C\sqrt{{\widetilde R}(1+{\log(\widetilde R/\alpha)})/n}
\end{align*}
with probability at least $1-3\alpha$. Moreover, if $\bp$ belongs to the Weighted Sobolev space $ Z^s_{\bw_\beta}((-1,1))$, then for~$n$ large enough
$$\E[\delta_2^2(\lambda({\bT}_{n}),\lambda^\star)]\leq  
C'\left[\frac{n}{\log n }\right]^{-\frac{2 s }{2s +(\bd-1)}} $$
where $C'$ only depends on $s$, $\bd$ and $\| \bp \|^*_{Z^s_{\bw_\beta}((-1,1))}$.
\end{proposition}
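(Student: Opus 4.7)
The plan is to derive the first inequality from \corref{Canonical_Kernels}, applied with truncation rank $\widetilde R$ and with $(\phi_r)_{r=1}^{\widetilde R}$ taken to be the orthonormal basis of real spherical harmonics $\{Y_{\ell,j}:\ell\leq R,\ j\in[d_\ell]\}$ enumerated in any order. The first thing to verify is the canonical hypothesis. Since $W-W_R$ is a rotation-invariant kernel (a function of $\langle x,y\rangle$ only), the partial expectation $x\mapsto\E[(W-W_R)^2(x,X_1)]$ is $SO(\bd)$-invariant, hence constant on $\mathds S^{\bd-1}$ by the uniqueness of the uniform measure~$\bsigma$, and therefore equal to $\E[(W-W_R)^2(X_1,X_2)]$.

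The structural properties of the sphere then supply every estimate entering the right-hand side of the Corollary. The addition formula $\sum_{j=1}^{d_\ell}Y_{\ell,j}^2\equiv d_\ell$ gives $\sum_{r=1}^{\widetilde R}\phi_r^2\equiv\widetilde R$, so $\rho(\widetilde R)\leq\widetilde R$ and $\max_r\|\phi_r^2\|_\infty\leq d_R$. Parseval applied to \eqref{eq:Decomposition_Kernel_Sphere} yields $\|W-W_R\|_2^2=\sum_{\ell>R}d_\ell(\bp_\ell^\star)^2$ and $\|W_R\|_2\leq\|W\|_\infty\leq 1$. Combining these with the Cauchy--Schwarz bound $|W_R(x,y)|\leq\|W_R\|_2\sqrt{\widetilde R}$ (from the addition formula) produces $\|W-W_R\|_\infty\leq 1+\sqrt{\widetilde R}$. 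Substituting into the Corollary and absorbing all universal constants into a single $C$ gives the first inequality; the hypothesis $n^3\geq\widetilde R\log(2\widetilde R/\alpha)$ matches the Corollary's assumption since $\rho(\widetilde R)\leq\widetilde R$.

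For the second claim I would insert the Sobolev tail bound $\sum_{\ell>R}d_\ell(\bp_\ell^\star)^2\leq C(\bp,s,\bd)R^{-2s}$ from \eqref{eq:l2decreasingSobolov} into the first inequality with $\alpha=1/n$, and use $\widetilde R\lesssim R^{\bd-1}$ to obtain
\[
\delta_2^2(\lambda(\bT_n),\lambda^\star)\ \lesssim\ R^{-2s}+\frac{R^{\bd-1}\log n}{n}
\]
on an event of probability at least $1-3/n$. Balancing the two summands gives $R\sim(n/\log n)^{1/(2s+\bd-1)}$ and a squared error of the stated order $(n/\log n)^{-2s/(2s+\bd-1)}$. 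To pass from high probability to expectation, I would use the deterministic upper bound $\delta_2^2(\lambda(\bT_n),\lambda^\star)\leq 2\|W\|_2^2+2\|\bT_n\|_F^2\leq 4$ (coming from Parseval and the fact that all entries of $\bTheta_0$ lie in $[0,1]$). The failure event then contributes at most $12/n$, negligible compared with the main rate.

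The most delicate bookkeeping step is the third summand of the Corollary, namely $\|W_R\|_2\bigl[2\rho(\widetilde R)/n\cdot(1+\max_r\|\phi_r^2\|_\infty\sqrt{\log/(2n)})\bigr]^{1/2}$. With only the crude bound $\max_r\|\phi_r^2\|_\infty\leq d_R$, this quantity is $\lesssim\sqrt{\widetilde R(1+\log)/n}$ provided $d_R\sqrt{\log/n}\lesssim\log$, a condition that is automatically fulfilled along the adaptive regime $R\sim(n/\log n)^{1/(2s+\bd-1)}$ used for the second statement (so the rate is not affected), but whose derivation from the blanket assumption $n^3\geq\widetilde R\log(2\widetilde R/\alpha)$ in the first statement requires sharper control\textemdash either a finer $\bL^\infty$ estimate on individual Gegenbauer-type harmonics, or an averaging argument that trades $\max_r\|\phi_r^2\|_\infty$ for the uniform bound $\|\sum_r\phi_r^2\|_\infty=\widetilde R$ already used.
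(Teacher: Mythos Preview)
Your approach matches the paper's: apply \corref{Canonical_Kernels} with truncation level $\widetilde R$, verify the canonical hypothesis via rotation invariance, use the addition formula for $\rho(\widetilde R)\leq\widetilde R$, and bound $\|W-W_R\|_\infty$ by $1+\|\bp^R\|_\infty\leq 1+\sqrt{2\widetilde R}$ via Cauchy--Schwarz and $G_\ell^\beta(1)=d_\ell/c_\ell$.

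The gap you flag in the diagonal term is real under your route, and the paper closes it not by sharpening $\max_r\|\phi_r^2\|_\infty$ but by sidestepping \lemref{Lemma1b_KG_revisited} altogether. The observation is that on the sphere
\[
W_{\widetilde R}(x,x)=\sum_{\ell=0}^R \bp_\ell^\star c_\ell G_\ell^\beta(1)=\sum_{\ell=0}^R d_\ell\,\bp_\ell^\star
\]
is a \emph{constant}, independent of $x$ (again by the addition formula). Hence the Hoffman--Wielandt contribution \eqref{eq:proof_delta2_2} equals $\frac{1}{\sqrt n}\bigl|\sum_{\ell=0}^R d_\ell\bp_\ell^\star\bigr|\leq\sqrt{2\widetilde R/n}$ \emph{almost surely}, by Cauchy--Schwarz and $\|\bp^R\|_2\leq\sqrt2$. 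No concentration argument is needed for this term, and the problematic factor $\max_r\|\phi_r^2\|_\infty$ never appears. This is the sharpest form of the ``averaging argument'' you allude to.

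For the expectation bound your argument with $\alpha=1/n$ and the deterministic ceiling $\delta_2^2\leq 4$ is correct and in fact tighter than the paper's version, which takes $\alpha=n^{-2}$ together with the cruder bound $\delta_2(\lambda(\bT_n),\lambda^\star)\leq\sqrt n+\|\bp\|_2$; both lead to the same rate after balancing $R\sim(n/\log n)^{1/(2s+\bd-1)}$.
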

A proof can be found in Appendix~\ref{proof:KG_Sphere}. These theoretical results show that the eigenvalues of~${\bT}_{n}$ converge towards  the unknown spectrum $\lambda^\star$. 
\subsection{Nonparametric Estimation of the Kernel Spectrum}
\label{sec:NEKS}

Let us now define our estimation procedure. Recall that we observe a graph and then its
${n\times n}$ adjacency matrix $\bA$, where $\bA_{ij}$ are independent Bernoulli random variables. Our model is that $$\P\{\bA_{ij}=1\}=(\bTheta_0)_{ij}=W(X_i,X_j)=\bp(\langle X_i,X_j\rangle),\qquad 1\le i<j\le n,$$
where $X_1,\ldots,X_n$ are i.i.d. uniform variables on $\mathds S^{\bd-1}$. 
Our aim is to recover the envelope function $\bp$ using only observations $\bA$, the variables $X_i$ being unobserved. The idea is to estimate the coefficients $\bp_\ell^\star$ of $\bp$ in the Gegenbauer polynomial basis, using that 
\[
\lambda^\star:=\big\{\bp_0^\star,\bp_1^\star,\ldots, \bp_1^\star, \ldots,\bp_\ell^\star,\ldots, \bp_\ell^\star,\ldots\big\}
\] is close to $\lambda(\bT_n)$ and this latter is close to the spectrum 
\[
\lambda:=\lambda({\widehat{\bT}_{n}})
\] of our observable $\widehat{\bT}_{n}=(1/n)\bA$.
 Let us fix $R\geq 0$ some 
resolution level, and denote
$$\lambda^{\star R}:=\big(\underbrace{\bp_0^\star}_{d_0},\underbrace{\bp_1^\star,\ldots, \bp_1^\star}_{d_1}, \ldots,\underbrace{\bp_R^\star,\ldots, \bp_R^\star}_{d_R}\big)$$ the first coefficients of $\bp$, {\it i.e.,} the first eigenvalues of $\bbT_W$\textemdash not necessarily the largest. In view of~\eqref{eq:Decomposition_Kernel_Sphere} and defining $\widetilde R$ as in~\eqref{eq:Definition_widetilde_R}, we understand that the $\widetilde R$ first eigenvalues of $\bbT_W$ belong to the convex set

\beq
\label{eq:MR}
\mathcal M_R:=
\Big\{
(\underbrace{u^\star_0}_{d_0},\underbrace{u^\star_1,\ldots, u^\star_1}_{d_1}, \ldots,\underbrace{u^\star_R,\ldots, u^\star_R}_{d_R})\in\bbR^{\widetilde R}
\Big\}.
\eeq

\begin{remark}
One can consider the convex set $\mathcal M_R^{[0,1]}$ of admissible coefficients 
\[
(\underbrace{u^\star_0}_{d_0},\underbrace{u^\star_1,\ldots, u^\star_1}_{d_1}, \ldots,\underbrace{u^\star_R,\ldots, u^\star_R}_{d_R})
\] 
corresponding to a function between $0$ and $1$, namely
\begin{align*}
\mathcal M_R^{[0,1]}:=\Big\{
(u^\star_0,&\, u^\star_1,\ldots, u^\star_1, \ldots,u^\star_R,\ldots, u^\star_R)\in\bbR^{\widetilde R}\text{ s.t.}\\
&\text{there exists an extension } (u^\star_\ell)_{\ell>R} \text{ s.t.}\\
&\text{for a.e. }t\in[-1,1],\quad
0\leq \sum_{\ell=0}^\infty u_\ell^\star c_\ell G_\ell^\beta(t)\leq 1
\Big\}.
\end{align*}
Then, note that $\lambda^{\star R}\in\mathcal M_R^{[0,1]}$ and that for all $x\in\mathcal M_R$
\[
\delta_2(\mathcal P_{\mathcal M_R^{[0,1]}}(x),\lambda^{\star R})\leq\delta_2(x,\lambda^{\star R})
\]
where $\mathcal P_{\mathcal M_R^{[0,1]}}$ denotes the ${\mathbf L}^2$-projection onto $\mathcal M_R^{[0,1]}$. It follows that all the results presented applies if we substitute $\mathcal M_R$ by $\mathcal M_R^{[0,1]}$. But, since we do not use the fact that the coefficients $(u_0^\star,u_1^\star,\ldots, u_1^\star, \ldots,u_R^\star,\ldots, u_R^\star)$ correspond to a function between $0$ and~$1$ in our proofs and our numerical study, we choose to alleviate presentation using $\mathcal M_R$ instead of $\mathcal M_R^{[0,1]}$.
\end{remark}

We assume that $n\geq \widetilde R$ and we denote $\mathfrak S_n$  the set of all permutation of~$[n]$.
    We define the estimator 
$\widehat\lambda^R$ as the closest sequence to $\lambda$ which belongs to the set of ‘‘{\it admissible}'' spectra $\mathcal M_R$ as follows:
\beq
\label{eq:Min_Admissible}
\widehat\lambda^R
\in\argmin_{u\in\mathcal M_R}\min_{\sigma\in\mathfrak S_n}
\Big\{
\sum_{k=1}^{\widetilde R}(u_k-\lambda_{\sigma(k)})^2+\sum_{k=\widetilde R+1}^n\lambda_{\sigma(k)}^2
\Big\}\,.
\eeq
where we recall that $\lambda$ denotes the spectrum of $\widehat{\bT}_n$.
%
We denote $\widehat\bp_\ell^R$ the stage values of $\widehat\lambda^R$, such that 
$$
\widehat\lambda^R=(\widehat\lambda_1^R,\ldots,\widehat\lambda_{\widetilde R}^R)=(\widehat\bp_0^R,\widehat\bp_1^R,\ldots, \widehat\bp_1^R, \ldots,\widehat\bp_R^R,\ldots, \widehat\bp_R^R).
$$ 
One can check that 
\[\displaystyle\widehat\bp_\ell^R=\frac1{d_\ell}\sum_{k=\widetilde{\ell-1}}^{\widetilde \ell}\lambda_{\sigma(k)}
\]
where $\sigma$ (that depends on $R$) is a permutation achieving the minimum in~\eqref{eq:Min_Admissible} and we use the notation~\eqref{eq:Definition_widetilde_R} with the convention $\widetilde{-1}=1$. Furthermore, the true complexity of this estimator is not $n!$ which matches the complexity of~$\mathfrak S_n$. The true computation complexity of our estimator is at most $(R+2)!$ as shown by the next theorem.

\begin{theorem}[Computational Complexity]
\label{thm:TrueComplexity}
Let $R\geq0$ such that $\widetilde R\leq n$. For any sequence of real numbers~$(\lambda_k)_{k=1}^n$ such that $\lambda_1\geq\lambda_2\geq\ldots\geq\lambda_n$ it holds that
\begin{align*}
\exists \mathfrak H_R\subseteq\mathfrak S_n\ \mathrm{s.t.}\ \forall {u\in\mathcal M_R},\ 
\min_{\sigma\in\mathfrak S_n}&
\Big\{
\sum_{k=1}^{\widetilde R}(u_k-\lambda_{\sigma(k)})^2+\sum_{k=\widetilde R+1}^n\lambda_{\sigma(k)}^2
\Big\}\\
&
=
\min_{\sigma\in\mathfrak H_R}
\Big\{
\sum_{k=1}^{\widetilde R}(u_k-\lambda_{\sigma(k)})^2+\sum_{k=\widetilde R+1}^n\lambda_{\sigma(k)}^2
\Big\}
\end{align*}
where the set $\mathfrak H_R$ depends only on $R$ and has size at most $(R+2)!$.
\end{theorem}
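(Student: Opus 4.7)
The plan is to reformulate the inner minimization as an assignment problem on $n$ elements and then apply the rearrangement inequality. For $u \in \mathcal M_R$, extend $u$ to a length-$n$ vector $\tilde u$ by setting $\tilde u_k := u_k$ for $k \leq \widetilde R$ and $\tilde u_k := 0$ for $k > \widetilde R$. Then the cost in \eqref{eq:Min_Admissible} equals $\sum_{k=1}^n (\tilde u_k - \lambda_{\sigma(k)})^2$, and since $\sum_k \tilde u_k^2$ and $\sum_k \lambda_{\sigma(k)}^2 = \sum_k \lambda_k^2$ do not depend on $\sigma$, minimizing over $\sigma \in \mathfrak S_n$ is equivalent to maximizing $\sum_{k=1}^n \tilde u_k \lambda_{\sigma(k)}$. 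By construction, $\tilde u$ is piecewise constant with at most $R+2$ level sets, corresponding to the values $u_0^\star, u_1^\star, \dots, u_R^\star, 0$ with multiplicities $s_\ell := d_\ell$ for $\ell \in \{0,\dots,R\}$ and $s_\star := n - \widetilde R$.

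By the rearrangement inequality \cite[Theorem 368]{hardy1952inequalities} applied to $(\tilde u_k)_k$ and $(\lambda_k)_k$, maximizing $\sum_k \tilde u_k \lambda_{\sigma(k)}$ forces $(\lambda_{\sigma(k)})_k$ to be ordered the same way as $(\tilde u_k)_k$. Concretely, any optimal $\sigma$ must partition the sorted indices $\{1,\dots,n\}$ into $R+2$ contiguous blocks of prescribed sizes $s_0, s_1, \dots, s_R, s_\star$ and assign each such block to one level set of $\tilde u$, with the block of highest-ranked $\lambda$'s going to the level set of largest target value, the next block to the level set of second-largest target value, and so on. The resulting assignment is characterized by a single bijection $\pi : \{1, \dots, R+2\} \to \{0, 1, \dots, R, \star\}$ that ranks the targets. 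For each such $\pi$, fix a canonical representative $\sigma_\pi \in \mathfrak S_n$ implementing the contiguous assignment it prescribes; since the cost is invariant under permutations of $\lambda_{\sigma(k)}$ within any single block where $\tilde u$ is constant, the internal ordering of each block is irrelevant and one representative $\sigma_\pi$ per $\pi$ suffices.

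Set $\mathfrak H_R := \{\sigma_\pi : \pi \in \mathfrak S_{R+2}\}$. This set has cardinality at most $(R+2)!$ and is specified a priori: it depends only on $R$ (through the multiplicities $d_0, \dots, d_R$ and the tail size $n - \widetilde R$), and not on $u$ or the specific numerical values of $\lambda$. The inclusion $\mathfrak H_R \subseteq \mathfrak S_n$ yields one direction of the claimed equality, while the rearrangement argument above produces, for every $u \in \mathcal M_R$, a minimizer of the original problem lying in $\mathfrak H_R$, giving the reverse direction. The essential obstacle is a conceptual one: recognizing that after the extension trick the objective becomes a bilinear rearrangement problem $\sum \tilde u_k \lambda_{\sigma(k)}$ whose optimum is block-monotone. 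Once that observation is in place, the $(R+2)!$ bound is immediate and the rearrangement inequality does all the work.
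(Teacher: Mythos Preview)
Your proof is correct and follows essentially the same approach as the paper: extend $u$ to length $n$ with zeros, apply the Hardy--Littlewood rearrangement inequality \cite[Theorem 368]{hardy1952inequalities}, and observe that the optimal permutation is determined by how the $R+2$ constant blocks of $\tilde u$ (the $R+1$ stages of sizes $d_0,\dots,d_R$ plus the zero block of size $n-\widetilde R$) are interleaved, yielding at most $(R+2)!$ candidates. The only cosmetic difference is that the paper parameterizes the block ordering by a pair $(\tau,q)\in\mathfrak S_{R+1}\times\{0,\dots,R+1\}$ (with $q$ marking where the zero block is inserted), whereas you parameterize directly by $\pi\in\mathfrak S_{R+2}$; these are equivalent descriptions of the same set $\mathfrak H_R$.
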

A proof can be found in Appendix~\ref{proof:TrueComplexity}. This proof is constructive and it gives the expression of $\mathfrak H_R$. 

\begin{remark}
Remark that the hypothesis $\lambda_1\geq\lambda_2\geq\ldots\geq\lambda_n$ is not necessary and can be removed. Indeed, if $\tau\in\mathfrak S_n$ a permutation such that $\lambda_{\tau(1)}\geq\lambda_{\tau(2)}\geq\ldots\geq\lambda_{\tau(n)}$ then it holds that 
\begin{align*}
\exists \mathfrak H_R\subseteq\mathfrak S_n\ \mathrm{s.t.}\ \forall {u\in\mathcal M_R},\ 
\min_{\sigma\in\mathfrak S_n}&
\Big\{
\sum_{k=1}^{\widetilde R}(u_k-\lambda_{\sigma(k)})^2+\sum_{k=\widetilde R+1}^n\lambda_{\sigma(k)}^2
\Big\}\\
&=
\min_{\sigma\in\mathfrak H_R}
\Big\{
\sum_{k=1}^{\widetilde R}(u_k-\lambda_{\sigma\circ\tau(k)})^2+\sum_{k=\widetilde R+1}^n\lambda_{\sigma\circ\tau(k)}^2
\Big\}
\end{align*}
where the set $\mathfrak H_R$ depends only on $R$ and has size at most $(R+2)!$.
\end{remark}

\begin{remark}
Interestingly the computational complexity of our spectral estimator depends cubicly on the sample size $n$ which is important when observing large networks. The presented algorithm (see Section~\ref{simus}) has exponential complexity in the dimension~$R$ of the model. Hence, it is relevant only for low degree $R$ kernels {\it stricto sensu}. However, it can be accelerated:
\begin{itemize}
\item If the experimenter knows that the eigenvalues are monotone (when sorting the eigenvalues so that the corresponding eigen-spaces have increasing dimensions) then the complexity is linear in $R$;
\item If not, different clustering strategies might be used in place of {\bf Steps 2-5} in Section~\ref{simus}. One may consult Section~\ref{speed} for further details, where we argue that hierarchical agglomerative clustering (HAC) with single linkage, of time complexity $\mathcal O(n^3)$, can be invoked here.
\end{itemize}
\end{remark}

Using Proposition~\ref{prop:BvH} and Theorem~\ref{thm:KG_Sphere} we can prove that $\widehat\lambda^R$ is a relevant estimator of the true first eigenvalues~$\lambda^{\star R}$ as shown in the next theorem.
\begin{theorem}
\label{thm:New_Main_Sphere}
There exists a universal constant $\kappa_0>0$ such that the following holds. For all $\alpha\in(0,1)$, if $n^3\geq(2\widetilde R)^3\vee\widetilde R\log(2\widetilde R/\alpha)$, with probability greater that $1-3\alpha$, it holds
 \begin{align*}
\delta_2(\widehat\lambda^R,\lambda^{\star R})\leq  4
\delta_2(\lambda^{\star R}, \lambda^{\star }) +
\kappa_0\sqrt{{\widetilde R}\left(1+\log\left({\widetilde R}/{\alpha}\right)\right)/n}.
\end{align*}
Moreover, there exists a universal constant $\kappa_1>0$ such that, if $n\geq 2\widetilde R$ then 
\begin{align*}
\E[\delta_2^2(\widehat \lambda^{R}, \lambda^{\star R})]&\leq \kappa_1\Big\{\delta_2^2(\lambda^{\star R}, \lambda^{\star }) + \frac{\widetilde R\log n}n\Big\}.
\end{align*}
\end{theorem}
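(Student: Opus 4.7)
The plan is an oracle-type argument that exploits the minimizing property of $\widehat\lambda^R$ together with the spectral concentration results of Proposition~\ref{prop:BvH} and Proposition~\ref{thm:KG_Sphere}. The first step is to recognize the objective in~\eqref{eq:Min_Admissible} as a squared $\delta_2$-distance: if $u \in \mathcal M_R \subset \mathbb R^{\widetilde R}$ is extended to length $n$ by appending $n-\widetilde R$ zeros, then the inner minimization over $\sigma \in \mathfrak S_n$ equals $\delta_2^2(\lambda, u)$ by the rearrangement identity~\eqref{eq:Hardy_Littlewood_L2}, where $\lambda := \lambda(\widehat\bT_n)$. Since $\lambda^{\star R} \in \mathcal M_R$, optimality of $\widehat\lambda^R$ yields $\delta_2(\widehat\lambda^R, \lambda) \leq \delta_2(\lambda^{\star R}, \lambda)$, so the triangle inequality gives $\delta_2(\widehat\lambda^R, \lambda^{\star R}) \leq 2\,\delta_2(\lambda, \lambda^{\star R})$.

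Next I chain through the three intermediate spectra: $\delta_2(\lambda, \lambda^{\star R}) \leq \delta_2(\lambda(\widehat\bT_n), \lambda(\bT_n)) + \delta_2(\lambda(\bT_n), \lambda^\star) + \delta_2(\lambda^\star, \lambda^{\star R})$, noting that $\delta_2(\lambda^\star, \lambda^{\star R})^2 = \sum_{\ell > R} d_\ell (\bp_\ell^\star)^2$ by the structure of the spherical spectrum. Proposition~\ref{thm:KG_Sphere} bounds the middle term by $2\,\delta_2(\lambda^\star, \lambda^{\star R})$ plus noise of order $\sqrt{\widetilde R(1+\log(\widetilde R/\alpha))/n}$. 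For the first term, the naive Hoffman--Wielandt bound $\delta_2 \leq \|\widehat\bT_n - \bT_n\|_F$ is of order~$1$ and therefore too coarse; one must combine Weyl's pointwise estimate $|\lambda_k(\widehat\bT_n) - \lambda_k(\bT_n)| \leq \|\widehat\bT_n - \bT_n\|$ applied to the top $\widetilde R$ eigenvalues (using Proposition~\ref{prop:BvH}) with a tail estimate on the remaining eigenvalues that exploits the fast decay of the spectrum of $\bbT_W$. Careful bookkeeping of the constants across the successive triangle inequalities then delivers the stated high-probability bound, with multiplicative factor $4$ on $\delta_2(\lambda^{\star R}, \lambda^\star)$ and noise $\kappa_0\sqrt{\widetilde R(1+\log(\widetilde R/\alpha))/n}$.

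For the expectation bound, I apply the layer-cake formula: writing $Z := \delta_2(\widehat\lambda^R, \lambda^{\star R})$ and $a := 4\,\delta_2(\lambda^{\star R}, \lambda^\star)$, one has $\E[Z^2] \leq 2a^2 + 4\int_0^\infty t\,\P((Z-a)_+ > t)\,dt$. Inverting the relation $t = \kappa_0\sqrt{\widetilde R(1+\log(\widetilde R/\alpha))/n}$ for $\alpha$ produces an exponentially decaying tail whose integral contributes $O(\widetilde R \log\widetilde R / n) \leq O(\widetilde R \log n / n)$, using the hypothesis $n \geq 2\widetilde R$. In the complementary regime $\alpha \geq 1/3$ where the tail inequality is vacuous, the deterministic bound $Z = O(\sqrt{\widetilde R})$ coming from $\|\widehat\bT_n\| \leq 1$ and from the structure of $\widehat\lambda^R$ keeps the contribution under control. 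Combining both yields $\E[\delta_2^2(\widehat\lambda^R, \lambda^{\star R})] \leq \kappa_1\{\delta_2^2(\lambda^{\star R}, \lambda^\star) + \widetilde R\log n /n\}$.

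The main obstacle is the control of $\delta_2(\lambda(\widehat\bT_n), \lambda(\bT_n))$ at the sharp rate $\sqrt{\widetilde R/n}$, which Hoffman--Wielandt alone does not provide. Overcoming it requires a two-part estimate that handles the top $\widetilde R$ eigenvalues via Weyl's inequality and Proposition~\ref{prop:BvH}, and the tail via arguments in the spirit of the proof of Proposition~\ref{thm:KG_Sphere}, crucially exploiting the rapid decay of the spectrum of the target operator $\bbT_W$.
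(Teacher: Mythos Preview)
Your oracle inequality $\delta_2(\widehat\lambda^R,\lambda^{\star R})\le 2\,\delta_2(\lambda,\lambda^{\star R})$ is valid but not useful, because the right–hand side is of order~$1$, not of order $\sqrt{\widetilde R/n}$. Indeed, the objective in~\eqref{eq:Min_Admissible} at $u=\lambda^{\star R}$ equals $\delta_2^2(\lambda,\lambda^{\star R})$, and this quantity contains the full residual $\sum_{k>\widetilde R}\lambda_{\sigma(k)}(\widehat\bT_n)^2$. Since $\|\widehat\bT_n\|_F^2=n^{-2}\sum_{i\neq j}\bA_{ij}$ is of order~$1$ for dense graphs while only $\widetilde R$ of the eigenvalues can be matched to the nonzero coordinates of $\lambda^{\star R}$, the bulk of the noise eigenvalues of $\widehat\bT_n$ survives. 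The same obstruction kills the ``two–part'' estimate you propose for $\delta_2(\lambda(\widehat\bT_n),\lambda(\bT_n))$: Weyl on the top $\widetilde R$ eigenvalues gives the right contribution, but the tail $\sum_{k>\widetilde R}(\lambda_k(\widehat\bT_n)-\lambda_k(\bT_n))^2$ is essentially $\sum_{k>\widetilde R}\lambda_k(\widehat\bT_n)^2$ (since the tail of $\bT_n$ is small by Proposition~\ref{thm:KG_Sphere}), and this is again $O(1)$. The fast decay of the spectrum of $\bbT_W$ controls the tail of $\bT_n$, not the tail of $\widehat\bT_n$.

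The paper avoids ever forming $\delta_2(\lambda(\widehat\bT_n),\lambda(\bT_n))$. It introduces the auxiliary minimiser $\overline\lambda^R\in\mathcal M_R$ defined exactly as $\widehat\lambda^R$ but with $\lambda(\bT_n)$ in place of $\lambda(\widehat\bT_n)$, and then restricts the comparison to the index set $S:=\widehat\sigma([\widetilde R])\cup\overline\sigma([\widetilde R])$, which has cardinality at most $2\widetilde R$. On this set both $\widehat\lambda^R$ and $\overline\lambda^R$ are expressed as the solutions of the \emph{same} finite minimisation over permutations of $S$, so the optimality argument can be run on $\bbR^{|S|}$ rather than $\bbR^n$; the passage from $\lambda(\widehat\bT_n)$ to $\lambda(\bT_n)$ then costs only $\sqrt{|S|}\,\|\widehat\bT_n-\bT_n\|\le\sqrt{2\widetilde R}\,\|\widehat\bT_n-\bT_n\|$ via Weyl. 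This restriction to~$S$ is the missing ingredient: it is what ensures that only $O(\widetilde R)$ eigenvalue differences, rather than all $n$, ever enter the estimate.
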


\noindent
A proof can be found in Appendix~\ref{proof:New_Main_Sphere}.

\begin{remark}
\label{rem:Thm6}
Possibly considering larger numerical constants $\kappa_0,\kappa_1>0$, in the relatively sparse model~\eqref{eq:relativelysparse}, the previous result reads as follows: if $n^3\geq (2\widetilde R)^3\vee\widetilde R\log(2\widetilde R/\alpha)$ then
$$\delta_2(\widehat\lambda^R,\zeta_n\,\lambda^{\star R})\leq4\,\zeta_n\,\,\delta_2(\lambda^{\star R}, \lambda^{\star }) 
+
\kappa_0\sqrt{\frac{\zeta_n\widetilde R}n}
\Bigg[1+\sqrt{\zeta_n(1+{\log(\widetilde R/\alpha)})}+\sqrt{\frac{\log(n/\alpha)}{n\zeta_n}}\Bigg].
$$
with probability at least $1-3\alpha$. If $n\geq 2\widetilde R$ then 
\begin{align*}
\E[\delta_2^2(\widehat \lambda^{R}, \zeta_n\,\lambda^{\star R})]&\leq \kappa_1\Big\{\zeta_n^2\delta_2^2(\lambda^{\star R}, \lambda^{\star }) + {\zeta_n\frac{\widetilde R\,(1+\zeta_n\log n)}n}\Big\}.
\end{align*}
In the aforementioned setting, we have denoted the eigenvalues of $\bbT_W$ by $\lambda^\star$ (as before), and their estimation (based on the adjacency matrix) by $\widehat\lambda$. These latter are scaled by a factor $\mathcal O_{\bbP}(\zeta_n)$ in the relatively sparse model~\eqref{eq:relativelysparse}.
\end{remark}

\pagebreak[3]


To go further
we need to analyze the behavior of the bias term $\delta_2(\lambda^{\star R}, \lambda^{\star })$ as a function of $R$ under some regularity conditions on the envelope $\bp$.
Indeed we can write $$\delta_2(\lambda^{\star R}, \lambda^{\star})^2=\sum_{k>\widetilde R}|\lambda_k^{\star}|^2=\sum_{\ell>R}{d_\ell}(\bp_\ell^\star)^2
.$$

Assume that $\bp$ belongs to the weighted Sobolev space $Z^s_{\bw_\beta}((-1,1))$ of regularity $s>0$ defined in Section~\ref{sec:WSobolevSphere}. Thus, since $\widetilde R= \mathcal{O}(R^{\bd-1})$, 
using~\eqref{eq:l2decreasingSobolov} and setting $R_{\mathrm{opt}}=\lfloor ({n}/{\log n })^{\frac{1}{2s+\bd-1}} \rfloor$, we get
\begin{align*}
\E\left[\delta_2^2(\widehat\lambda^{R_{\mathrm{opt}}},\lambda^\star)\right]&\leq
2\delta_2^2(\lambda^{\star R_{\mathrm{opt}}},\lambda^\star)+2\E\delta_2^2(\widehat\lambda^{R_{\mathrm{opt}}},\lambda^{\star R_{\mathrm{opt}}})\\
&\lesssim R_{\mathrm{opt}}^{-2s }+\frac{\widetilde R_{\mathrm{opt}}\log n}n
\lesssim  \left[\frac{n}{\log n }\right]^{-\frac{2 s }{2s +(\bd-1)}}\,.
\end{align*}
Thus we recover a classical nonparametric rate of convergence for  estimating a function with smoothness $s$ in a space of dimension $\bd -1$, see  \cite{hasminskii1990} for instance. This is also the rate towards the probability matrix obtained by \cite{xu2017rates}. However, assuring that this is the optimal rate of convergence is beyond the scope of the paper. Note that the present setting to estimate a graphon nonparametrically differs from the  regression framework. First, the $\delta_2$ loss is defined up to the action of the permutation group. Moreover, despite the
 number~$n^2$ of  observations, the problem suffers from the presence of latent variables. Indeed the design points~$X_i$'s are unobserved. This all contributes to a non standard estimation problem.
 
 \pagebreak[3]
 
\begin{remark}
In the relatively sparse model~\eqref{eq:relativelysparse}, the same calculation leads to 
\[
R_{\mathrm{opt}}=\Big(\frac{n\zeta_n}{1+\zeta_n\,\log n }\Big)^{\frac{1}{2s+\bd-1}} 
\]
and
\begin{align*}
\E\left[\delta_2^2(\widehat\lambda^{R_{\mathrm{opt}}},\zeta_n\,\lambda^\star)\right]&
\lesssim  \zeta_n^2\left[\frac{n\zeta_n}{1+\zeta_n\log n }\right]^{-\frac{2 s }{2s +(\bd-1)}}\,.
\end{align*}
\end{remark}

We also face a classical issue of nonparametric statistics: {\it how to choose $R$, given that the best theoretical choice $R_{\mathrm{opt}}$ depends on the unknown smoothness~$s$?} This is the point of the next section.

\subsection{Adaptation to the Smoothness of $\bp$}
\label{sec:Adaptation}

Let us define
$\R=\{1, 2,\dots, R_{\max}\}$ the possible values for $R$, with $2\widetilde R_{\max}\leq n$.
Following the Goldenshluger-Lepski method \citep{GoldLepski2013}, set
\begin{align}\label{defGL1}
B(R):=\max_{R'\in\R} \Big\{\delta_2(\widehat \lambda^{R'},\widehat \lambda^{R'\wedge R})-\kappa\sqrt{\frac{\widetilde R'\log n}n}\Big\},
\end{align}
where $R\wedge R'=\min(R,R')$ and $\kappa>0$ is a constant to be specified later. This function can be seen as an estimation of the (unknown) bias $\delta_2(\lambda^{\star R}, \lambda^{\star })$. Then we define our final resolution level  $\widehat R$ as a minimizer of an approximation of the risk as 
\begin{align}\label{defGL2}
\widehat R\in\argmin_{R\in\R}\Big\{B(R)+\kappa\sqrt{\frac{\widetilde R\log n}n}\Big\}.
\end{align}
The estimator of $\lambda^\star$ is then $\widehat \lambda^{\widehat R}$, which depends on the choice of constant $\kappa$ in~\eqref{defGL1} and~\eqref{defGL2}. 
The following results show that this estimator is as good as the best one of the collection $( \widehat \lambda^{ R})_{R\in \R}$,
up to a constant $C$, provided that $\kappa$ is large enough.

\begin{theorem}\label{thm:adap} 
Let $\widehat \lambda^{\widehat R}$ the estimator defined by~\eqref{eq:Min_Admissible},~\eqref{defGL1} and~\eqref{defGL2}. 
There exist numerical constants $C>0$ and $\kappa_0>0$ $($as in Theorem~\ref{thm:New_Main_Sphere}$)$ such that ,
if $\kappa\geq \kappa_0\sqrt{11}$, 
with probability $1-3n^{-8}$
$$\delta_2(\widehat \lambda^{\widehat R}, \lambda^{\star})\leq
C\min_{R\in\R}\Big\{\delta_2(\lambda^{\star R}, \lambda^{\star })+\kappa\sqrt{\frac{\widetilde R\log n}n}\Big\}.$$
Moreover, for $\kappa\geq \kappa_0\sqrt5$, there exists a numerical  constant $C'>0$ such that 
$$\E[\delta_2^2(\widehat \lambda^{\widehat R}, \lambda^{\star})]\leq
C'\min_{R\in\R}\Big\{\delta_2^2(\lambda^{\star R}, \lambda^{\star })+\kappa^2\frac{\widetilde R\log n}n\Big\}.$$
\end{theorem}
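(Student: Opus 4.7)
The plan follows the standard Goldenshluger-Lepski template, treating $\delta_2(\lambda^{\star R}, \lambda^\star)$ as the bias and $\kappa\sqrt{\widetilde R\log n/n}$ as the variance-type term. First, apply Theorem~\ref{thm:New_Main_Sphere} at every $R\in\mathcal{R}$ with $\alpha=n^{-9}$ and union-bound over the at most $R_{\max}\leq n$ resolutions: on an event $\Omega$ of probability $\geq 1-3n^{-8}$ the inequality
\[
\delta_2(\widehat\lambda^R, \lambda^{\star R}) \leq 4\,\delta_2(\lambda^{\star R}, \lambda^\star) + \kappa_0\sqrt{\widetilde R(1+\log(\widetilde R/\alpha))/n}
\]
holds simultaneously for all $R\in\mathcal{R}$. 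With this choice of $\alpha$ and $\widetilde R\leq n$ one has $1+\log(\widetilde R/\alpha)\leq 11\log n$, so for $\kappa\geq\kappa_0\sqrt{11}$ the variance term is dominated by $\kappa\sqrt{\widetilde R\log n/n}$ on $\Omega$.

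Next, control $B(R)$ on $\Omega$. Terms with $R'\leq R$ in the maximum are non-positive since $R'\wedge R=R'$. For $R'\geq R$, the triangle inequality for $\delta_2$ (which is a genuine metric by \eqref{eq:Hardy_Littlewood_L2}) gives
\[
\delta_2(\widehat\lambda^{R'}, \widehat\lambda^R) \leq \delta_2(\widehat\lambda^{R'}, \lambda^{\star R'}) + \delta_2(\lambda^{\star R'}, \lambda^{\star R}) + \delta_2(\lambda^{\star R}, \widehat\lambda^R),
\]
where the middle piece equals $\bigl(\sum_{R<\ell\leq R'} d_\ell(\bp_\ell^\star)^2\bigr)^{1/2}\leq\delta_2(\lambda^{\star R},\lambda^\star)$ by the zero-padding structure of $\lambda^{\star R}$. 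Applying the Theorem~\ref{thm:New_Main_Sphere} bound at both $R'$ and $R$ (the former dominated by the $-\kappa\sqrt{\widetilde R'\log n/n}$ penalty inside the max), one obtains
\[
B(R)\leq 9\,\delta_2(\lambda^{\star R},\lambda^\star)+\kappa\sqrt{\widetilde R\log n/n}.
\]

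To close the high-probability claim, decompose $\delta_2(\widehat\lambda^{\widehat R}, \widehat\lambda^R)\leq\delta_2(\widehat\lambda^{\widehat R},\widehat\lambda^{\widehat R\wedge R})+\delta_2(\widehat\lambda^{\widehat R\wedge R},\widehat\lambda^R)$, bound the summands by the defining maxima of $B(R)$ and $B(\widehat R)$ evaluated respectively at $R'=\widehat R$ and $R'=R$, and use the minimality $B(\widehat R)+\kappa\sqrt{\widetilde{\widehat R}\log n/n}\leq B(R)+\kappa\sqrt{\widetilde R\log n/n}$ to collapse everything to $\delta_2(\widehat\lambda^{\widehat R},\widehat\lambda^R)\leq 2B(R)+2\kappa\sqrt{\widetilde R\log n/n}$. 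Combining with $\delta_2(\widehat\lambda^{\widehat R},\lambda^\star)\leq\delta_2(\widehat\lambda^{\widehat R},\widehat\lambda^R)+\delta_2(\widehat\lambda^R,\lambda^{\star R})+\delta_2(\lambda^{\star R},\lambda^\star)$ and the bounds above yields the desired $\delta_2(\widehat\lambda^{\widehat R},\lambda^\star)\leq C\bigl(\delta_2(\lambda^{\star R},\lambda^\star)+\kappa\sqrt{\widetilde R\log n/n}\bigr)$ on $\Omega$ for every $R\in\mathcal{R}$; minimising in $R$ gives the first statement.

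For the expectation bound, rerun the argument with $\alpha=n^{-3}$: the logarithmic factor now satisfies $1+\log(\widetilde R/\alpha)\leq 5\log n$, so only $\kappa\geq\kappa_0\sqrt 5$ is needed, and the good event $\Omega$ has probability $\geq 1-3n^{-2}$. Square the pathwise oracle inequality on $\Omega$ via $(a+b)^2\leq 2(a^2+b^2)$. On $\Omega^c$ use the crude deterministic bound $\delta_2^2(\widehat\lambda^{\widehat R},\lambda^\star)\leq 4$, which follows from $\|\widehat\lambda^{\widehat R}\|_2\leq\|\lambda(\widehat{\bT}_n)\|_2=\|\widehat{\bT}_n\|_F\leq 1$ (the averaging expression for $\widehat\bp_\ell^R$ is $\ell^2$-contractive) and $\|\lambda^\star\|_2=\|W\|_{L^2}\leq 1$; its contribution is $O(n^{-2})$, absorbed into the $\widetilde R\log n/n$ term. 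The main bookkeeping subtlety, and the one to verify carefully, is precisely that the maximum over $R'$ inside $B(R)$ does not inflate the variance to $\widetilde R_{\max}\log n/n$: this is where the exact calibration $\kappa\geq\kappa_0\sqrt{11}$ (respectively $\kappa_0\sqrt 5$) is dictated by the choice of $\alpha$ and by the requirement that the concentration penalty $-\kappa\sqrt{\widetilde R'\log n/n}$ cancel the variance contribution at every level $R'$ uniformly.
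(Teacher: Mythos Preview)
Your proof is correct and follows the same Goldenshluger--Lepski template as the paper: the decomposition $\delta_2(\widehat\lambda^{\widehat R},\lambda^\star)\leq 2B(R)+2\kappa D(R)+\delta_2(\widehat\lambda^R,\lambda^{\star R})+\delta_2(\lambda^{\star R},\lambda^\star)$, the control of $B(R)$ via the triangle inequality and the monotonicity $\delta_2(\lambda^{\star R'},\lambda^\star)\leq\delta_2(\lambda^{\star R},\lambda^\star)$ for $R'\geq R$, and the choice $\alpha=n^{-9}$ (resp.\ $n^{-3}$) to calibrate $\kappa\geq\kappa_0\sqrt{11}$ (resp.\ $\kappa_0\sqrt5$) are identical to the paper's argument. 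The one minor difference is your deterministic bound on $\Omega^c$: you use $\delta_2^2(\widehat\lambda^{\widehat R},\lambda^\star)\leq 4$ via the $\ell^2$-contractivity of the block-averaging, whereas the paper uses the cruder $(1+2\sqrt2)^2\widetilde R_{\max}$; yours is sharper but both are absorbed by the variance term, so this does not change the result or the structure of the proof.
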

A proof can be found in Appendix~\ref{proof:adap}. Thus we choose $\kappa\geq \kappa_0\sqrt5$  in~\eqref{defGL1} and~\eqref{defGL2}, the practical choice of the tuning constant $\kappa$ will be tackled in Section~\ref{sec:numeric}. Note also that the interesting choice of $\mathcal R$ is such that $R_{\mathrm opt}\in\mathcal R$ which is the case for $cn^{\frac{1}{2s+\mathbf d-1}}\leq R_{\max}$ where $c>0$ is a constant. A more simple choice of~$R_{\max}$ may be $cn\leq 2\widetilde R_{\max}\leq n$ where $0<c<1$ is a constant. In these cases, we get the following rate of convergence.

\begin{corollary}\label{vitesse} Assume that $\bp$ belongs to the Weighted Sobolev space $ Z^s_{\bw_\beta}((-1,1))$. Then
there exists a constant $C>0$ depending only on $\| \bp \|^*_{Z^s_{\bw_\beta}((-1,1))}$, $s$ and $\bd$ such that 
$$\E[\delta_2^2(\widehat \lambda^{\widehat R}, \lambda^{\star})]\leq C\left[\frac{n}{\log n }\right]^{-\frac{2 s }{2s +(\bd-1)}}.$$
\end{corollary}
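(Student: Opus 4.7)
The plan is to apply the oracle inequality of Theorem~\ref{thm:adap} (expectation version) and then to balance bias against variance under the Sobolev assumption. First, I would invoke Theorem~\ref{thm:adap} with $\kappa\geq\kappa_0\sqrt 5$ to obtain
\[
\E[\delta_2^2(\widehat\lambda^{\widehat R},\lambda^\star)]
\leq C'\min_{R\in\mathcal R}\Big\{\delta_2^2(\lambda^{\star R},\lambda^\star)+\kappa^2\frac{\widetilde R\log n}{n}\Big\},
\]
so it suffices to bound the bracket for a particular $R=R_{\mathrm{opt}}\in\mathcal R$.

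Next, I would control the two terms in the bracket using the Sobolev assumption. For the bias term, since by construction $\delta_2^2(\lambda^{\star R},\lambda^\star)=\sum_{k>\widetilde R}|\lambda_k^\star|^2=\sum_{\ell>R}d_\ell(\bp_\ell^\star)^2$ and $\bp\in Z^s_{\bw_\beta}((-1,1))$, the estimate \eqref{eq:l2decreasingSobolov} yields
\[
\delta_2^2(\lambda^{\star R},\lambda^\star)\leq C(\bp,s,\bd)\,R^{-2s}.
\]
For the variance term, the bound \eqref{eq:Definition_widetilde_R} and the discussion that follows give $\widetilde R\leq \frac{2(R+\bd-1)^{\bd-1}}{(\bd-1)!}=\mathcal O(R^{\bd-1})$.

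I would then choose $R_{\mathrm{opt}}=\lfloor (n/\log n)^{1/(2s+\bd-1)}\rfloor$ so that the bias $R_{\mathrm{opt}}^{-2s}$ and the variance $R_{\mathrm{opt}}^{\bd-1}(\log n)/n$ are of the same order $(n/\log n)^{-2s/(2s+\bd-1)}$; this is the standard nonparametric tradeoff alluded to in the paragraph preceding the corollary. Substituting into the oracle bound gives the claimed rate.

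The only technical point to verify is that $R_{\mathrm{opt}}\in\mathcal R=\{1,\ldots,R_{\max}\}$; as noted in the paragraph following Theorem~\ref{thm:adap}, the standing hypothesis $2\widetilde R_{\max}\leq n$ together with any lower bound such as $R_{\max}\geq c\, n^{1/(2s+\bd-1)}$ (in particular $R_{\max}\asymp n^{1/(\bd-1)}$) suffices, and for sufficiently large $n$ one has $R_{\mathrm{opt}}\geq 1$. The constant $C$ absorbs $C'$, $\kappa^2$, $C(\bp,s,\bd)$, and the implicit constants in $\widetilde R=\mathcal O(R^{\bd-1})$, and therefore depends only on $\|\bp\|^*_{Z^s_{\bw_\beta}((-1,1))}$, $s$, and $\bd$, as claimed. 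There is no real obstacle here: the work is entirely done by Theorem~\ref{thm:adap} and the Sobolev inequality \eqref{eq:l2decreasingSobolov}; the corollary is obtained by plugging in $R_{\mathrm{opt}}$.
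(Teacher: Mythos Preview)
Your proposal is correct and follows exactly the approach the paper intends: the corollary is a direct consequence of the oracle inequality in Theorem~\ref{thm:adap}, the Sobolev bias bound~\eqref{eq:l2decreasingSobolov}, the estimate $\widetilde R=\mathcal O(R^{\bd-1})$, and the choice $R_{\mathrm{opt}}=\lfloor (n/\log n)^{1/(2s+\bd-1)}\rfloor$, as spelled out in the paragraphs surrounding Theorem~\ref{thm:New_Main_Sphere} and Theorem~\ref{thm:adap}. The paper does not give a separate proof for this corollary, and your write-up matches the in-text derivation.
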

This means that the algorithm automatically adapts $\widehat R$ to the unknown smoothness $s$ of $\bp$: it chooses a small resolution level for smooth functions
and a greater $\widehat R$ for irregular functions, that provides the best result in each case. 

The final step is to define the following estimator of envelope $\bp$ by
\begin{equation}\label{pestimator}
\forall t\in[-1,1],\quad\widehat\bp^{\widehat R}(t):=\sum_{\ell=0}^{\widehat R}\widehat\bp_\ell^{\widehat R} c_\ell G_\ell^\beta(t)\,.
\end{equation}

\subsection{Estimating the envelope function}

Inferring from the estimation of $\lambda^\star$ to the estimation of $\bp$, we face an identifiability problem. Indeed, consider for instance the case $\bd=3$, which implies $\beta=1/2$,
$d_\ell=2\ell+1$, $c_\ell=2\ell+1$. For $\mu>0$, let
\begin{align*}
\bp_a&=\frac12 c_0 G_0^\beta+\mu c_1 G_1^\beta+0 \times c_2 G_2^\beta+0\times  c_3 G_3^\beta+\mu c_4 G_4^\beta\,,\\
\bp_b&=\frac12 c_0 G_0^\beta+0\times c_1 G_1^\beta+\mu c_2 G_2^\beta+\mu c_3 G_3^\beta+0\times c_4 G_4^\beta
\end{align*}
Then the associated spectrum are
\begin{align*}
\lambda_a^\star&=(1/2,\; \underbrace{\mu,\mu, \mu}_3,\; \underbrace{0,0,0,0,0}_5,\; \underbrace{0,0,0,0,0,0,0}_7, \; \underbrace{\mu,\mu, \mu,\mu,\mu, \mu,\mu,\mu, \mu}_9)\\
\lambda_b^\star&=(1/2,\; \underbrace{0,0,0}_3,\; \underbrace{\mu,\mu, \mu,\mu,\mu}_5,\; \underbrace{\mu,\mu, \mu,\mu,\mu, \mu,\mu}_7,\;\underbrace{0,0,0, 0,0,0,0,0,0}_9)
\end{align*}
which are indistinguishable in $\delta_2$ metric, although $\|\bp_a-\bp_b\|_2=\mu\sqrt{24}$. Furthermore, note that, for $\mu\leq 1/24$, these functions have values in $[0,1]$.

\begin{remark}
A natural question is then: {\rm Can we recover the right eigenvalues labels from the empirical eigenvectors?}\newline
 Under stronger requirements (RKHS-type assumptions), convergence of the eigenvectors of~$\bA/n$ towards the eigenfunctions of the integral operator $\bbT_W$ may be proved as in~\cite{tang2013universally}. Essentially, it is possible to prove that the orthogonal projections $\Pi_\ell$ onto eigenspaces of $\bA/n$  are closed in operator norm to the $n\times n$ matrix with entries $\sum_{j=1}^{d_\ell}Y_{\ell j}(X_i)Y_{\ell j}(X_j)$ given by the Zonal Harmonics. Unfortunately, this statistics depends on the latent points and suffers from the ‘‘agnostic'' error as explained in~\cite{klopp2017oracle}. While possible theoretically, it seems difficult in practice to use the information of the observed eigenvectors to uncover the right labels of the eigenvalues.
\end{remark}

\pagebreak[3]

Nevertheless we can state a result in the case of  a finite spectrum of distinct eigenvalues. 

\begin{proposition} 
\label{cor:variancepr} 
 Assume that the envelope function $\bp$ is polynomial of degree~$D$, {\it i.e.,} $\bp_\ell^\star=0$ for any $\ell>D$ and $\bp_D^\star\neq0$. Assume also that all nonzeros $\bp_\ell^\star$ for $\ell\in\{0,\dots, D\}$ are distinct.  If $R\geq D$ and  $n$ is large enough then 
$$\|\widehat \bp^{R}-\bp\|_2^2\leq 11\kappa_0^2\frac{\,{\widetilde R}\log n}{n}\,, $$
with probability greater that $1-3n^{-8}$ where $\kappa_0>0$ is the constant defined in Theorem~\ref{thm:New_Main_Sphere}. Furthermore, it holds
$$\E[\|\widehat \bp^{R}-\bp\|_2^2]\leq (18+4\kappa_0^2) \frac{
\,\widetilde R\log n}n\,,$$ 
for $n$ large enough.
\end{proposition}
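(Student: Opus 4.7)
The plan is to reduce the claim to the $\delta_2$-bound already proved in Theorem~\ref{thm:New_Main_Sphere} by exploiting that the bias vanishes for $R\geq D$ and that both $\widehat\lambda^{R}$ and $\lambda^{\star R}$ share the block structure of $\mathcal M_R$. First, because $\bp$ is a polynomial of degree $D\leq R$, the decomposition~\eqref{eq:Coefficients_Polynomial} is finite and the orthonormality of $(Z_\ell)_{\ell\geq 0}$ in $\bL^2([-1,1],\bw_\beta)$ yields the Parseval identity
\[
\|\widehat\bp^{R}-\bp\|_{2}^{2}=\sum_{\ell=0}^{R} d_\ell\,(\widehat\bp_\ell^{R}-\bp_\ell^\star)^{2},
\]
which is precisely the cost of the block-respecting matching in the definition of $\delta_2(\widehat\lambda^{R},\lambda^{\star R})$. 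Hence $\delta_2^{2}(\widehat\lambda^{R},\lambda^{\star R})\leq \|\widehat\bp^{R}-\bp\|_{2}^{2}$ unconditionally, and the whole game is to prove the reverse inequality under the distinctness hypothesis.

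This identifiability step is the main obstacle. I would address it by opening up the variational definition~\eqref{eq:Min_Admissible}: for a fixed $\sigma\in\mathfrak S_n$, minimization in $u\in\mathcal M_R$ is closed-form and yields $u_\ell^{*}=d_\ell^{-1}\sum_{k\text{ in block }\ell}\lambda_{\sigma(k)}$, after which the criterion reduces to maximizing $\sum_{\ell=0}^{R}d_\ell(u_\ell^{*})^{2}$ over $\sigma$. Substituting the empirical eigenvalues by their targets (the values $\bp_\ell^\star$ with multiplicity $d_\ell$, padded by zeros), a blockwise Cauchy--Schwarz argument shows that this quantity is at most $\sum_{\ell}d_\ell(\bp_\ell^\star)^{2}$ with equality only when each block of $\widehat\lambda^{R}$ is filled by empirical eigenvalues clustering around a single value $\bp_{\ell'}^{\star}$. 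Since Theorem~\ref{thm:New_Main_Sphere} with $R\geq D$ forces $\delta_2(\widehat\lambda^{R},\lambda^{\star R})$ below half the minimum gap between distinct nonzero $\bp_\ell^{\star}$ as soon as $n$ exceeds a threshold depending only on $\bp$, this block allocation becomes rigid: for each $\ell$ with $\bp_\ell^{\star}\neq 0$ the algorithm must assign to block $\ell$ of $\widehat\lambda^{R}$ the $d_\ell$ empirical eigenvalues close to $\bp_\ell^{\star}$, while the zero-valued blocks may be permuted among themselves at zero cost. This pins down $\widehat\bp_\ell^{R}$ as the natural empirical estimator of $\bp_\ell^{\star}$ and upgrades the trivial inequality above to the equality $\|\widehat\bp^{R}-\bp\|_2^2=\delta_2^{2}(\widehat\lambda^{R},\lambda^{\star R})$ on the favourable event $A$ of Theorem~\ref{thm:New_Main_Sphere}.

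Granting this equality, the first assertion follows by applying Theorem~\ref{thm:New_Main_Sphere} with $\alpha=n^{-8}$: the hypothesis $R\geq D$ kills the bias $\delta_2(\lambda^{\star R},\lambda^{\star})$, and the tail term $\kappa_0^{2}\widetilde R(1+\log(\widetilde R/n^{-8}))/n$ is, for $n$ large and $\widetilde R\leq n$, bounded by $11\kappa_0^{2}\widetilde R\log n/n$. For the expectation I would split on the event $A$. On $A$, the identification above together with the in-expectation bound of Theorem~\ref{thm:New_Main_Sphere} contribute a term of order $\widetilde R\log n/n$. On $A^c$, whose probability is at most $3n^{-8}$, I would use a deterministic control obtained by testing $u=0$ in~\eqref{eq:Min_Admissible}: this yields $\sum_{k=1}^{\widetilde R}(\widehat\lambda_k^{R})^{2}\leq 4\|\widehat\bT_n\|_F^{2}\leq 4$ by Cauchy--Schwarz and the fact that $\bA$ has $\{0,1\}$-entries, so $\|\widehat\bp^{R}-\bp\|_2^2=O(1)$ deterministically. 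The $A^c$-contribution is therefore $O(n^{-8})$, negligible compared to $\widetilde R\log n/n$, and summing the two pieces gives the stated expected-value bound.
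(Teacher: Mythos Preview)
Your overall strategy matches the paper's: kill the bias via $R\geq D$, invoke Theorem~\ref{thm:New_Main_Sphere}, use a gap argument to upgrade the $\delta_2$-bound to an $\|\cdot\|_2$-bound on the high-probability event, then split the expectation. Two points deserve tightening.

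First, the identifiability step. Your Cauchy--Schwarz analysis of the criterion in the idealized problem is correct but is a detour; the passage from ``the idealized optimum has pure blocks'' to ``the empirical $\widehat\bp_\ell^R$ lands near $\bp_\ell^\star$ for the \emph{same} index $\ell$'' is where the real work lies, and you do not close it. The paper argues directly from $\delta_2(\widehat\lambda^R,\lambda^{\star R})<\delta/2$ (with $\delta$ the minimum gap among nonzero $\bp_\ell^\star$ and their distance to $0$): the sum of squared coordinate differences being below $(\delta/2)^2$ forces every coordinate difference below $\delta/2$, and then the fact that the multiplicities $d_\ell$ are \emph{strictly increasing} pins the block matching. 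You never invoke this monotonicity of $(d_\ell)$, which is the linchpin---without it, blocks of equal size could be swapped at zero $\delta_2$-cost while changing $\|\widehat\bp^R-\bp\|_2$.

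Second, the constants in expectation. Keeping $\alpha=n^{-8}$ throughout gives at best $(11\kappa_0^2+o(1))\widetilde R\log n/n$, which does not recover the stated $(18+4\kappa_0^2)$ unless $\kappa_0$ happens to be small. The paper instead reruns the argument with $\alpha=n^{-2}$ for the expectation, so that $1+\log(\widetilde R/\alpha)\leq 4\log n$ produces the $4\kappa_0^2$, and pairs it with the coarse bound $\|\widehat\bp^R-\bp\|_2^2\leq(1+\sqrt2)^2\widetilde R$ on the complement (probability $\leq 3n^{-2}$) to produce the $18$. Your $O(1)$ deterministic bound via testing $u=0$ is fine, but you should couple it with the smaller $\alpha$ if you want the exact constant in the statement.
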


A proof can be found in Appendix~\ref{proof:variancepr}.  
We actually  prove that these upper bounds are true as soon as
\[
n\geq 2\widetilde R\quad\mathrm{and}
\quad2\kappa_0\sqrt{11{\widetilde R}\log(n)/n}<\min_{0\leq i\neq j\leq  D;\ \bp_i^\star\neq0}|\bp_i^\star-\bp_j^\star|\wedge|\bp_i^\star|
\]
Note that we uncover (up to a log factor) the parametric rate of estimation. 

{
\begin{remark}
Under the same assumptions than Proposition \ref{cor:variancepr}, in the relatively sparse model~\eqref{eq:relativelysparse},  the previous result reads as follows: 
 If $R\geq D$ and  $n$ is large enough then 
$$\|\widehat \bp^{R}-\bp\|_2^2\leq \kappa_0^2 \frac{\zeta_n \widetilde R}{n} \left(1 +11 \zeta_n \log n  \right)$$
with probability greater that $1-3n^{-8}$, for some constant $\kappa_0$ possibly large. Furthermore, we have 
$$
\E[\|\widehat \bp^{R}-\bp\|_2^2]\leq \kappa_1 \frac{\zeta_n \widetilde R}{n}\left (1+ 4\zeta_n \log n \right),
$$
for $n$ large enough and for some constant $\kappa_1$ possibly large.
\end{remark}
}

Let us now state what the adaptive procedure defined by~\eqref{defGL1} and~\eqref{defGL2} can do in this polynomial case. 

\begin{corollary}\label{u} 
 Assume that the envelope function $\bp$ is polynomial of degree~$D$, {\it i.e.,} $\bp_\ell^\star=0$ for any $\ell>D$ and $\bp_D^\star\neq0$. Assume also that all nonzeros $\bp_\ell^\star$ for $\ell\in\{0,\dots, D\}$ are distinct. If ${R_{\max}}\geq D$, there exists a numerical constant $C$ such that,
if $n$ large enough, then $\widehat R\geq D$ a.s. 
and $$\E[\|\widehat \bp^{\widehat R}-\bp\|_2^2]\leq C\widetilde{D}\left(\frac{\log n }{n}\right).$$
\end{corollary}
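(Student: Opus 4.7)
The plan is to combine three ingredients: the oracle inequality from Theorem~\ref{thm:adap} (which yields a $\widetilde D(\log n)/n$ bound on the $\delta_2$ loss since the bias vanishes at $R=D$), an underfitting argument showing $\widehat R \geq D$ with high probability, and a correct-labeling argument converting the $\delta_2$ distance on spectra into the $\bL^2$ norm on envelope functions.

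First I would establish $\widehat R \geq D$. For any fixed $R<D$, since $\lambda^\star$ carries the nonzero coefficient $\bp_D^\star$ with multiplicity $d_D$ and no element of $\mathcal M_R$ can accommodate this, the quantity $c_R := \inf_{z\in\mathcal M_R}\delta_2^2(z,\lambda^\star)$ is strictly positive; let $c := \min_{R<D} c_R > 0$. Taking $R'=D$ in the max defining $B(R)$ (valid since $R_{\max}\geq D$), and using Theorem~\ref{thm:New_Main_Sphere} with $\alpha=n^{-8}$ (so the vanishing bias at $R=D$ gives $\delta_2(\widehat\lambda^D,\lambda^\star)\leq\kappa_0\sqrt{11\widetilde D\log n/n}$), the reverse triangle inequality yields
\[
B(R) \geq \delta_2(\widehat\lambda^R,\lambda^\star) - \delta_2(\widehat\lambda^D,\lambda^\star) - \kappa\sqrt{\widetilde D\log n/n} \geq \sqrt c - o(1)
\]
on an event of probability $1-3n^{-8}$. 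On the other hand, in the max defining $B(D)$ terms with $R'\leq D$ are nonpositive, and for $R'>D$ the bias vanishes at both arguments, so by Theorem~\ref{thm:New_Main_Sphere} the corresponding term is bounded by $(2\kappa_0\sqrt{11}-\kappa)\sqrt{\widetilde{R'}\log n/n}$, which is nonpositive once $\kappa\geq 2\kappa_0\sqrt{11}$. Thus $B(D)\leq 0$ and $B(D)+\kappa\sqrt{\widetilde D\log n/n}=o(1)$, so for $n$ large the criterion is strictly smaller at $R=D$ than at any $R<D$, giving $\widehat R\geq D$.

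Next I would bound $\E[\|\widehat\bp^{\widehat R}-\bp\|_2^2]$. The second part of Theorem~\ref{thm:adap} applied at $R=D$ (zero bias) gives
\[
\E[\delta_2^2(\widehat\lambda^{\widehat R},\lambda^\star)] \leq C'\kappa^2\widetilde D\log n/n.
\]
To pass from $\delta_2$ to $\bL^2$ I would reuse the mechanism of Proposition~\ref{cor:variancepr}: the distinctness of the nonzero $\bp_\ell^\star$, combined with concentration of the spectrum of $\widehat{\bT}_n$ around that of $\bbT_W$, implies that on an event $\mathcal E_n$ of probability $\geq 1-3n^{-8}$ the optimization defining $\widehat\lambda^{\widehat R}$ assigns the cluster of empirical eigenvalues near $\bp_\ell^\star$ to group $\ell$ of $\mathcal M_{\widehat R}$ for each $\ell\leq D$, while the remaining groups absorb eigenvalues near $0$. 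On $\mathcal E_n$ the natural index matching between $\widehat\lambda^{\widehat R}$ and $\lambda^\star$ is optimal, so
\[
\|\widehat\bp^{\widehat R}-\bp\|_2^2 = \sum_{\ell=0}^{\widehat R}d_\ell(\widehat\bp_\ell^{\widehat R}-\bp_\ell^\star)^2 = \delta_2^2(\widehat\lambda^{\widehat R},\lambda^\star).
\]
On $\mathcal E_n^c$ I would crudely bound $\|\widehat\bp^{\widehat R}-\bp\|_2^2\leq 2\widetilde{R_{\max}}+2\leq n+2$ using that eigenvalues of $(1/n)\bA$ lie in $[-1,1]$ and hence $|\widehat\bp_\ell^{\widehat R}|\leq 1$; its contribution to the expectation is $O(n^{-7})$. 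Combining yields $\E[\|\widehat\bp^{\widehat R}-\bp\|_2^2]\leq C\widetilde D\log n/n$.

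The hard part will be rigorously justifying the correct-labeling event $\mathcal E_n$: showing the joint optimization over $u\in\mathcal M_R$ and permutations $\sigma$ in the definition of $\widehat\lambda^{\widehat R}$ indeed assigns the empirical clusters to the correct groups of $\mathcal M_{\widehat R}$. This is a stability argument leveraging the gaps $\min_{\ell\neq\ell'}|\bp_\ell^\star-\bp_{\ell'}^\star|>0$ (distinctness hypothesis) together with the distinctness of the multiplicities $(d_\ell)_\ell$ on the sphere (valid for $\bd\geq 3$): any swap of clusters between groups forced by a different assignment incurs extra squared cost of order at least $(\min_{\ell\neq\ell'}|\bp_\ell^\star-\bp_{\ell'}^\star|)^2\min_\ell d_\ell$, which dominates the $\widetilde R(\log n)/n$ variance for $n$ large. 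The same stability analysis already underpins Proposition~\ref{cor:variancepr} and can be reused essentially verbatim.
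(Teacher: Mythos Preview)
Your proposal is essentially correct and shares the paper's overall strategy: use the oracle inequality of Theorem~\ref{thm:adap} at $R=D$ (zero bias) to get the $\widetilde D(\log n)/n$ rate for $\delta_2$, invoke the labeling argument of Proposition~\ref{cor:variancepr} to identify $\delta_2^2(\widehat\lambda^{\widehat R},\lambda^\star)$ with $\|\widehat\bp^{\widehat R}-\bp\|_2^2$ on a high-probability event, and handle the complement with a crude bound.

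The one genuine difference is your argument for $\widehat R\geq D$. The paper obtains this almost for free from the oracle inequality itself: Theorem~\ref{thm:adap} already gives $\delta_2(\widehat\lambda^{\widehat R},\lambda^\star)\leq C\kappa\sqrt{\widetilde D\log n/n}\to 0$ with high probability, and since $\widehat\lambda^{\widehat R}\in\mathcal M_{\widehat R}$, the positive lower bound $\inf_{z\in\mathcal M_R}\delta_2(z,\lambda^\star)>0$ for $R<D$ forces $\widehat R\geq D$. You instead analyze the Goldenshluger--Lepski criterion directly, lower-bounding $B(R)$ for $R<D$ and upper-bounding $B(D)$. This is more laborious and, as you have written it, requires the stronger constraint $\kappa\geq 2\kappa_0\sqrt{11}$: your triangle-inequality bound $\delta_2(\widehat\lambda^{R'},\widehat\lambda^D)\leq 2\kappa_0\sqrt{11}\sqrt{\widetilde{R'}\log n/n}$ for $R'>D$ only yields $B(D)\leq 0$ under that stronger assumption, and without it $B(D)$ could be as large as $O(\sqrt{\widetilde{R_{\max}}\log n/n})$, which need not be $o(1)$. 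This is fixable (use the decomposition of $B(R)$ from the proof of Theorem~\ref{thm:adap} to get $B(D)\leq\kappa_0\sqrt{11}\,D(D)$ under the original $\kappa\geq\kappa_0\sqrt{11}$), but the paper's route via the oracle bound avoids the issue entirely.
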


More precisely, the result is true as soon as $\sqrt{\widetilde D \log(n)/n}$ is smaller than a constant times $\min_{0\leq i\neq j\leq  D;\ \bp_i^\star\neq0}|\bp_i^\star-\bp_j^\star|\wedge|\bp_i^\star|$.
A proof can be found in Appendix~\ref{proof:adaptpolynomial}.  
Here again, the parametric rate of estimation is attained by the adaptive procedure. 


\section{Extensions to Compact Symmetric Spaces}
\label{sec:LatentSpaces}

The aim of this section is to extend the previous result on spheres to numerous spaces such as compact Lie groups and compact symmetric spaces. A useful reference might be the books~\cite{wolf2007harmonic,bump2013lie} or the nice survey written in \cite[Chapter 3]{plmHDR} (see also \cite{meliot2014cut} for a presentation of compact symmetric spaces) which has been useful to polish this section.

\subsection{Harmonic Analysis on Compact Symmetric Spaces}

In this section, we consider that $(\bS,\gamma)$ is a compact Lie group with an invariant Riemannian metric~$\gamma$, or more generally a compact symmetric space. The definitions will be given below when describing \textit{Cartan's Classification} and, to be specific, this section focuses on (semi)simple connected compact Lie groups (sscc in short) and simple simply connected compact symmetric spaces (ssccss in short). These structures encompass spheres, projective spaces, Grassmannians, and orthogonal or unitary groups; and one can handle explicit eigenvectors computations in this framework.

Consider again that the graphon $W(g,h)$ depends only on (the cosine of) the distance $\gamma(g,h)$ (normalized so that the range of $\gamma$ equals $[0,\pi]$) between points $g,h\in\bS$ such that
\beq
\notag
W(g,h)=\bp(\cos \gamma(g,h))=\bp(\cos \gamma(gh^{-1},e_{\bS}))=:p(gh^{-1})
\eeq
where $e_{\bS}$ denotes the identity element and $p(g)=\bp(\cos\gamma(g,e_{\bS}))$. Also we assume that $0\leq W\leq 1$ since~$W$ defines a probability matrix. In particular, $W$ is square-integrable on the compact $\bS\times \bS$. Observe that estimating $W$ reduces to estimate~$\bp$ that reduces to estimate~$p$ and \textit{vice versa}. By definition of the distance, note that
\begin{itemize}
\item When $\bS$ is a sscc Lie group, the function $p$ is invariant by conjugation, namely $p(hgh^{-1})=p(g)$ for any latent points $g,h\in\bS$. We denote by $\mathbf L^2(\bS)^{\bS}$ the space of square-integrable functions $p$ on~$\bS$ that are invariant by conjugation.
\item When $\bS=G/K$ is a ssccss, the function $p$ is bi-$K$-invariant, namely $p(k_1gk_2)=p(g)$ for any $k_1,k_2\in K$ and $g\in G$. We may denote by $\mathbf L^2(K\setminus\! G/K)$ the space of square-integrable functions on $G$ that are bi-$K$-invariants.
\end{itemize}
In particular, Peter–Weyl’s decomposition (presented below) gives an $\mathbf L^2$-decomposition of $p$ in these settings. The measure on $\bS$ is the Haar measure (normalized to be a probability measure), denoted~$\mathrm dg$, standardly defined for any compact topological group $\bS$.  The harmonic analysis on $\bS$ is based on the Fourier transform of the space $\mathbf L^2(\bS,\mathrm dg)$ of square integrable (complex valued) functions on $\bS$. This space $\mathbf L^2(\bS,\mathrm dg)$ is a Hilbert space for the scalar product
\[
\langle f_1,f_2\rangle=\int_{\bS}\overline{f_1}(g)f_2(g)\mathrm dg\,.
\]
We define also the convolution product
\[
(f_1*f_2)(g)=\int_{\bS}f_1(gh^{-1})f_2(h)\mathrm dh\,.
\]
Now, recall that $W$ defines a symmetric Hilbert-Schmidt operator $\bbT_W$ on $\mathbf L^2(\bS,\mathrm dg)$ and the spectral theorem~\eqref{eq:Spectral_Decomposition} gives 
\beq
\notag
W(g,h)=\sum_{k\geq1}\lambda_k^\star {\phi_k}(g){\phi_k}(h)\,,
\eeq
for an $\bL^2(\bS,\mathrm dg)$-orthonormal basis $(\phi_i)_{i\geq1}$. Remark also that
\begin{align*}
(\bbT_W(f))(g_1)&=\int_{\bS}W(g_1,g_2)f(g_2)\mathrm dg_2=\int_{\bS}W(g_1g_2^{-1},e_{\bS})f(g_2)\mathrm dg_2
\\&=\int_{\bS}p(g_1h^{-1})f(h)\mathrm dh=(p*f)(g_1)
\end{align*}
for all $f\in\bL^2(\bS,\mathrm dg)$. We deduce that $\bbT_W$ is the convolution on the left by $p$. We continue with a short reminder on harmonic analysis on compact groups and compact quotients.

\begin{description}
\item[Representation of Compact Groups and Irreducible Characters] The first ingredient is representations of any compact group $\bS$. It is defined by a finite dimensional complex vector space $V$ and by a continuous morphism of groups $\rho:\bS\to \mathrm{GL}(V)$ where $\mathrm{GL}(V)$ denotes the group of isomorphisms of $V$. A linear representation~$(V,\rho)$ is \textit{irreducible} if one cannot find a subspace $W$ such that $0\subsetneq W\subsetneq V$ and that is $\bS$-stable, \textit{i.e.}, for all~$w\in W$ and all $g\in\bS$, one has $\rho(g)(w)\in W$. If $V$ is a linear representation then one can always split it into irreducible components
\[
V=\bigoplus_{\mathbf{r}\in\widehat{\bS}}m_{\mathbf{r}} V^{\mathbf{r}}
\]
where $\widehat{\bS}$ is the countable set of isomorphism classes of irreducible representations ${\mathbf{r}}=(\rho^{\mathbf{r}},V^{\mathbf{r}})$ of~$\bS$ and $m_{\mathbf{r}}\geq1$. Furthermore, we denote by 
\[
\mathrm{ch}^{{\mathbf{r}}}(g)=\mathrm{tr}(\rho^{\mathbf{r}}(g))\,,
\]
the irreducible characters associated to the irreducible representation ${\mathbf{r}}=(\rho^{\mathbf{r}},V^{\mathbf{r}})$ of~$\bS$ where $\mathrm{tr}$ denotes the trace operator on $\mathrm{End}_{\bbC}(V^{\mathbf{r}})$ the set of (complex) endomorphisms of $V^{\mathbf{r}}$. In particular, since $\rho^{\mathbf{r}}(g)$ is unitary, it holds
\[
\forall g\in\bS,\quad|\mathrm{ch}^{{\mathbf{r}}}(g)|\leq d_{\mathbf{r}}=\mathrm{ch}^{{\mathbf{r}}}(e_{\bS})\,,
\]
where $d_{\mathbf{r}}$ is the dimension of $V^{\mathbf{r}}$. Also, note that
\[
\mathrm{ch}^{{\mathbf{r}}}*\mathrm{ch}^{{\mathbf{s}}}=\frac{\delta_{{\mathbf{r}}{\mathbf{s}}}}{d_{\mathbf{r}}}\mathrm{ch}^{{\mathbf{r}}}\,,
\]
where $\delta_{{\mathbf{r}}{\mathbf{s}}}$ denotes the Kroneker delta.
\item[Peter–Weyl’s Decomposition] 
The Peter–Weyl’s Decomposition shows that $(\mathrm{ch}^{{\mathbf{r}}})_{{\mathbf{r}}\in\widehat{\bS}}$ is an orthonormal basis of  $\mathbf L^2(\bS)^{\bS}$. It follows that 
\[
p=\sum_{{\mathbf{r}}\in\widehat{\bS}}\langle p,\mathrm{ch}^{{\mathbf{r}}}\rangle \mathrm{ch}^{{\mathbf{r}}}
\]
in $\mathbf L^2(\bS)^{\bS}$. Using that $\bbT_W$ is a left convolution operator by $p$, we find that $(\mathrm{ch}^{{\mathbf{r}}})_{\mathbf{r}\in\widehat{\bS}}$ is an eigenfunction basis of  $\bbT_W$ associated to the eigenvalues $(\lambda^\star_{\mathbf{r}})_{\mathbf{r}\in\widehat{\bS}}$ given by
\[
\lambda^\star_{\mathbf{r}}=\frac{\langle p,\mathrm{ch}^{{\mathbf{r}}}\rangle}{d_{\mathbf{r}}}\,,
\]
with multiplicity $d_{\mathbf{r}}^2=\mathrm{dim}(\mathrm{End}_{\bbC}(V^{\mathbf{r}}))$. 
\item[Compact Gelfand Pairs and Zonal Spherical Functions] There is an extension of this decomposition to quotients $\bS=G/K$ of a compact topological group $G$ by a closed subgroup~$K$. The most convenient setting for this extension is the one of compact Gelfand pairs defined as follows. 
\begin{definition}[Gelfand Pair]
We say that $(G,K)$ is a Gelfand pair if for any irreducible representation~$V^{\mathbf{r}}$ of~$G$, the space of $K$-fixed vectors 
\[
V^{{\mathbf{r}},K}=\Big\{v\in V^{\mathbf{r}}\ :\ \forall k\in K\,,\ \rho^{\mathbf{r}}(k)(v)=v\Big\}
\]
has dimension at most one.
\end{definition}
An irreducible representation $V^{\mathbf{r}}$ is called \textit{spherical} if $\mathrm{dim}_{\bbC}(V^{{\mathbf{r}},K})=1$. We denote by $\widehat G^{K}$ the set of spherical representations of the Gelfand pair $(G,K)$. If ${\mathbf{r}}\in\widehat G^{K}$ then we denote by $e^{\mathbf{r}}$ a unit vector vector in $V^{{\mathbf{r}},K}$ which is unique up to a multiplicative complex constant of modulus one. The \textit{zonal spherical functions} 
\[
\mathrm{zon}^{\mathbf{r}}(g)=\sqrt{d_{\mathbf{r}}}\big\langle e^{\mathbf{r}},\rho^{\mathbf{r}}(g)(e^{\mathbf{r}})\big\rangle_{V^{\mathbf{r}}}
\]
where $d_{\mathbf{r}}$ is the dimension of $V^{\mathbf{r}}$. In particular, since $\rho^{\mathbf{r}}(g)$ is unitary and~$e^{\mathbf{r}}$ normalized, it holds
\beq
\label{eq:BorneZon}
\forall g\in G,\quad|\mathrm{zon}^{\mathbf{r}}(g)|\leq \sqrt{d_{\mathbf{r}}}=\mathrm{zon}^{\mathbf{r}}(e_{G})\,,
\eeq
where $e_{G}$ is the identity element of $G$. Also, note that
\[
\mathrm{zon}^{\mathbf{r}}*\mathrm{zon}^{{\mathbf{s}}}=\frac{\delta_{{\mathbf{r}}{\mathbf{s}}}}{\sqrt{d_{\mathbf{r}}}}\mathrm{zon}^{{\mathbf{r}}}\,.
\]

\item[Cartan’s Extension of Peter–Weyl’s Decomposition] In the case of bi-$K$-invariant functions on $G$, an extension of Peter–Weyl’s decomposition theorem shows that $(\mathrm{zon}^{\mathbf{r}})_{{\mathbf{r}}\in\widehat G^{K}}$ is an orthonormal basis of $\mathbf L^2(K\setminus\! G/K)$. It follows that 
\[
p=\sum_{{\mathbf{r}}\in\widehat G^{K}}\langle p,\mathrm{zon}^{{\mathbf{r}}}\rangle \mathrm{zon}^{{\mathbf{r}}}
\]
in $\mathbf L^2(K\setminus\! G/K)$. Using that $\bbT_W$ is a left convolution operator by $p$, we find that $(\mathrm{zon}^{{\mathbf{r}}}))_{\mathbf{r}\in\widehat{G}^K}$ is an eigenfunction of  $\bbT_W$ associated to the eigenvalue~$(\lambda^\star_{\mathbf{r}})_{\mathbf{r}\in\widehat{G}^K}$ given by
\[
\lambda^\star_{\mathbf{r}}=\frac{\langle p,\mathrm{zon}^{{\mathbf{r}}}\rangle}{\sqrt{d_{\mathbf{r}}}}\,.
\]
with multiplicity $d_{\mathbf{r}}=\mathrm{dim}(V^{\mathbf{r}})$. The reader may recognize here the case of the sphere studied in the previous section.
\item[Cartan's Classification of sscc Lie Groups and ssccss] Now, a crucial question is how explicit are these decompositions. We begin with the notion of sscc Lie Groups that is based on Cartan’s criterion for semisimplicity. It implies that a simply connected compact Lie group can always be written as a direct product of simple simply connected compact Lie group (in short sscc Lie group). Here, by simple we mean a Lie group $\bS$ whose Lie algebra is simple, that is nonabelian and without non-trivial ideal. Interestingly, Cartan’s classification of sscc Lie groups shows that any sscc Lie group fall into one of the following infinite families:
\begin{description}
\item[Group type] 
{\ }
\begin{itemize}
\item Special unitary group $\mathrm{SU}(n + 1)$,
\item Odd spin group $\mathrm{Spin}(2n + 1)$,
\item Compact symplectic group $\mathrm{USp}(n)$,
\item Even spin group $\mathrm{Spin}(2n)$,
\end{itemize}
or, it is one of the five exceptional compact Lie groups. 
\end{description}
The sscc Lie groups belong to a larger class of compact Riemannian manifolds called symmetric spaces. Moreover, any simply connected compact symmetric space is isometric to a product of simple simply connected compact symmetric spaces (in short ssccss), which cannot be split further. A classification of all the ssccss $\bS$ has been proposed by Cartan which shows that $\bS$ is either of Group type (see above) or one of the following objects
\begin{description}
%
%
%
\item[non-Group type] In this case, $\bS$ falls into one of the following infinite families:
\begin{itemize}
\item Real Grassmannians $\mathrm{SO}(p + q)/(\mathrm{SO}(p)\times\mathrm{SO}(q))$,
\item Complex Grassmannians $\mathrm{SU}(p + q)/(\mathrm{SU}(p)\times\mathrm{SU}(q))$,
\item Quaternionic Grassmannians $\mathrm{USp}(p + q)/(\mathrm{USp}(p)\times\mathrm{USp}(q))$,
\item Space of real structures on a complex space $\mathrm{SU}(n)/\mathrm{SO}(n)$,
\item Space of quaternionic structures on an even complex space $\mathrm{SU}(2n)/\mathrm{USp}(n)$,
\item Space of complex structures on a quaternionic space $\mathrm{USp}(n)/\mathrm{U}(n)$,
\item Space of complex structures on an even real space $\mathrm{SO}(2n)/\mathrm{SU}(n)$,
\end{itemize}
or, it is one of the twelve exceptional sscc symmetric spaces.
\end{description}
Remark that, for all the ssccss examples, the eigenfunctions of the spectral decomposition of $\bbT_W$ do not depend on $\bbT_W$ and they are irreducible characters in the group case and zonal spherical functions in the non-group case.
\item[Weyl’s Highest Weight theorem and Cartan–Helgason's Extension] 
Given a ssccss, we can make explicit the set $\widehat{\bS}$ in the group case, and the set $\widehat G^K$ in the non-group case thanks to the Weyl’s highest weight theorem and Cartan–Helgason's extension, see \cite[Chapter 3]{plmHDR} for a short and well written introduction. The highest weight theorem is completed by a formula for the irreducible character~$\mathrm{ch}^{\mathbf r}$ of the module $V^{\mathbf r}$ with highest weight $\mathbf r$, see for instance \cite[Chapter~22]{bump2013lie} and Weyl’s integration formula \cite[Chapter~17]{bump2013lie}. 

The same analysis can be lead in the non-group case. The only additional difficulty is the manipulation of zonal spherical functions. This issue will be handled by considering compact symmetric spaces of rank 1 in the following.
\end{description}
Now, we are ready to extend the previous results on the sphere to other latent spaces $\bS$, namely the compact symmetric spaces of rank $1$.
\subsection{Compact Symmetric Spaces of Rank One}
We focus here on the interesting case of compact symmetric spaces of rank one for which the zonal spherical functions can be explicitly computed. Indeed, one has the following classification of the compact symmetric spaces of rank one and of the corresponding spherical representations, see \cite[Chapter 3]{plmHDR} and \cite{volchkov2009harmonic} for instance. We recall that  ‘‘{\it intuitively, a symmetric space is a Riemannian manifold where geodesics are ‘‘nicely'' arranged in a symmetric way around any point}''. More formally, a compact symmetric space of rank one is 
ssccss that is $2$-point homogeneous, namely
\begin{itemize}
\item {\bf [Compact Symmetric Spaces of Rank One]} {\it Given two pairs of points $(x_1,x_2)$ and $(y_1,y_2)$ such that $\gamma(x_1,x_2)=\gamma(y_1,y_2)$, there is an isometry of $\bS$ that maps $x_1$ $($resp. $x_2)$ onto $y_1$ $($resp. $y_2)$.}
\end{itemize}
The compact symmetric spaces of rank one are
\begin{itemize}
\item the real spheres $\bbS^{\mathbf d-1}=\mathrm{SO}(\mathbf d)/\mathrm{SO}(\mathbf d-1)$,
\item the real projective spaces $\bbR\bbP^{\mathbf d-1}=\mathrm{SO}(\mathbf d)/\mathrm{O}(\mathbf d-1)$,
\item the complex projective spaces $\bbC\bbP^{\mathbf d-1}=\mathrm{SU}(\mathbf d)/\mathrm{U}(\mathbf d-1)$,
\item the quaternionic projective spaces $\bbH\bbP^{\mathbf d-1}=\mathrm{USp}(\mathbf d)/(\mathrm{USp}(\mathbf d-1)\times\mathrm{USp}(1))$,
\item or the octonionic projective plane $\bbO\bbP^{2}=F_4/\mathrm{Spin}(9)$.
\end{itemize}
In the case of compact symmetric spaces of rank $1$, one can explicitly described the spherical representations $\widehat G^K$. The dimension $d_\ell:=\mathrm{dim}_\bbC(V^{\ell\omega_0})$ of the $\ell$-th spherical representation $V^{\ell\omega_0}$ are given in Table~\ref{t:CSR1} in the Appendix. One can even describe the zonal spherical functions of these spaces, and thus compute the eigenvalues~$\bp^\star_\ell$ (recall that their multiplicities $d_\ell$ are given by Table~\ref{t:CSR1} in the Appendix). 

For compact symmetric spaces of rank one, on can define 
\begin{itemize}
\item a probability density function $\bw(t)$ on $[-1,1]$ defined as the density of the pushforward measure of the Haar measure by the map $x\mapsto t=\cos(\gamma(x,e))$,
\item the pushforwards  $Z_\ell$ on $[-1,1]$ of the zonal spherical functions, normalized so that they are an orthonormal basis of $\mathbf L^2([-1,1],\bw)$, the space of square-integrable functions with respect to the weight function~$\bw$ on~$[-1,1]$.
\end{itemize}


In Table~\ref{t:CSR1}, one hase the following standard parameterizations of latent space $\bS$:
\begin{itemize}
\item the real sphere~$\bbS^{\mathbf d-1}$ is endowed with the coordinates $x=(x_1,\ldots,x_{\mathbf d})$ such that $|| x||_2=1$ and the ‘‘{\it north pole}'' is given by  $e=(0,\ldots,0,1)$. We denote the ‘‘{\it weight function}'' by $\bw(x)$, it is the density of the push forward measure of the Haar measure by the map $x\mapsto\cos(\gamma(x,e))$ where we recall that $\gamma(x,e)=\arccos x_{\mathbf d}$.
\item the projective space $\bbF\bbP^{\mathbf d-1}$ (where $\bbF=\bbR,\bbC,\bbH$ or $\bbO$) is endowed with projective coordinates $[x_1 : x_2 : \cdots : x_{\mathbf d}]$ with the $x_i$'s in $\bbF$, and the ‘‘north pole'' is given by  $e=[0  : \cdots : 0 : 1]$. We denote the ‘‘weight function'' by $\bw(x)$, it is the density of the push forward measure of the Haar measure by the map $x\mapsto\cos(\gamma(x,e))$ where we recall that $\gamma(x,e)=2\arccos(|x_{\mathbf d}|/||x||_2)$.

\end{itemize}
One can show that the Jacobi polynomials (resp. beta distributions on $[-1,1]$) are the pushforward zonal spherical functions $Z_\ell$ (resp. the Haar measure) with shape parameters $(\alpha,\beta)$ depending on the base field and the dimension, see Table~\ref{t:CSR1} in the Appendix. In the case of real spheres, these Jacobi polynomials are the Legendre/Gegenbauer polynomials seen in Section~\ref{sec:Sphere_Example}. We recall that for shape parameters $(\alpha,\beta)$ the beta density distribution $\mathbf w$ is given by
\begin{align}
\label{eq:Beta}
\mathbf w(t)&=\frac{\Gamma(\alpha+\beta)}{2^{\alpha+\beta-1}\Gamma(\alpha)\Gamma(\beta)}(1-t)^{\alpha-1}(1+t)^{\beta-1}\mathds1_{[-1,1]}(t)\,,
\end{align}
where $\Gamma$ is the Gamma function. In particular, recall that one has
\[
\bp=\sum_\ell \sqrt{d_\ell}\bp_\ell^\star Z_\ell\quad\mathrm{and}\quad\bp_\ell^\star=\frac1{\sqrt{d_\ell}}\langle \bp,Z_\ell\rangle_{\mathbf L^2([-1,1],\bw)}\,,
\]
in $\mathbf L^2([-1,1],\bw)$. We further assume that there exists $s>0$, a (Sobolev) regularity parameter, such that
\[
\forall R\geq1\,,\quad
\sum_{\ell >R} d_\ell (\bp_\ell^\star)^2 \leq C(\bp,s,\bS)\, R^{-2s }.
\]
for some constant $ C(\bp,s,\bS)>0$ and for dimensions $(d_\ell)_{\ell\geq0}$ that depends only on $\bS$. 

\pagebreak[3]

Now, recall the definition of the set of models $\mathcal M_R$ in~\eqref{eq:MR} (the dimensions~$(d_\ell)_{\ell\geq0}$ are given by Table~\ref{t:CSR1}), of the estimator~$\widehat\lambda^R$ in~\eqref{eq:Min_Admissible}, of the adaptation~$\widehat R$ in~\eqref{defGL2}, of $\widehat\bp^{\widehat R}$ in~\eqref{pestimator}, and of $\bT_n$ in~\eqref{def:Tn}. Our estimation procedure is the same as in the sphere example the only difference is that the dimensions~$(d_\ell)_{\ell\geq0}$, $\widetilde R$ and the zonal spherical function $Z_\ell$ depend on the latent space under consideration, see Table~\ref{t:CSR1} in the Appendix. 

\begin{theorem}
\label{thm:SSCCSSversion}
Let $\bS$ be a compact symmetric space of rank one with Riemanian dimension $\mathbf d-1$. There exist constants $C_0,C_1,C_2,\kappa_0,\kappa_1>0$ such that the following holds. Let $\alpha\in(0,1/3)$ and $n,\,R\geq0$ such that $n\geq2\widetilde R$ and $n^3\geq\widetilde R\log(2\widetilde R/\alpha)$ where~$\widetilde R$ is given in Table~\ref{t:CSR1} in the Appendix. Then it holds,
\begin{itemize}
\item {\bf [Convergence of the matrix of probabilities]} 
\begin{align*}
%
\delta_2(\lambda({\bT}_{n}),\lambda^\star)\leq  
2\Big[\sum_{\ell>R}{d_\ell}(\bp_\ell^\star)^2 \Big]^{\frac12}+
C_0\sqrt{{\widetilde R}(1+{\log(\widetilde R/\alpha)})/n}
\end{align*}
with probability at least $1-3\alpha$ and 
$$
\E[\delta_2^2(\lambda({\bT}_{n}),\lambda^\star)]=\mathcal{O}
\left[\Big(\frac{n}{\log n }\Big)^{-\frac{2 s }{2s +(\bd-1)}}\right] \,.
$$
\item {\bf [Convergence of the matrix of finite rank approximation]}
 \begin{align*}
\delta_2(\widehat\lambda^R,\lambda^{\star R})\leq  4
\delta_2(\lambda^{\star R}, \lambda^{\star }) +
\kappa_0\sqrt{{\widetilde R}\left(1+\log\left({\widetilde R}/{\alpha}\right)\right)/n}.
\end{align*}
with probability at least $1-3\alpha$ and 
\begin{align*}
\E[\delta_2^2(\widehat \lambda^{R}, \lambda^{\star R})]
&\leq \kappa_1\left\{\delta_2^2(\lambda^{\star R}, \lambda^{\star }) + \frac{\widetilde R\log n}n\right\}.
\end{align*}
\item {\bf [Convergence of the adaptation]}
For $\kappa\geq \kappa_0\sqrt{11}$, it holds that 
$$\delta_2(\widehat \lambda^{\widehat R}, \lambda^{\star})\leq
C_1\min_{R\in\R}\Big\{\delta_2(\lambda^{\star R}, \lambda^{\star })+\kappa\sqrt{\frac{\widetilde R\log n}n}\Big\},$$
with probability $1-3n^{-8}$. Furthermore, for $\kappa\geq \kappa_0\sqrt{5}$, it holds that
$$\E[\delta_2^2(\widehat \lambda^{\widehat R}, \lambda^{\star})]\leq
C_2\min_{R\in\R}\Big\{\delta_2^2(\lambda^{\star R}, \lambda^{\star })+\kappa^2\frac{\widetilde R\log n}n\Big\}.$$
\end{itemize}
\end{theorem}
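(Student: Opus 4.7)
The plan is to import wholesale the sphere analysis of \secref{Sphere_Example}, substituting the spherical harmonics by the Peter--Weyl/Cartan--Helgason basis attached to the spherical representations of the Gelfand pair defining $\bS$. Once the analogue of \prpref{KG_Sphere} is established at the dimensions $d_\ell$ and $\widetilde R$ listed in Table~\ref{t:CSR1}, the remaining arguments (finite-rank recovery and Goldenshluger--Lepski adaptation) are purely structural and transfer without any geometric input specific to the sphere.

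The first and essentially only genuinely new step is to apply \corref{Canonical_Kernels}. For each spherical representation indexed by $\ell$, pick an $\bL^2(\bS,\mathrm dg)$-orthonormal basis $(\phi_{\ell,j})_{j=1}^{d_\ell}$ of the corresponding eigenspace of $\bbT_W$. The key input is the addition formula
\[
\forall x\in\bS,\quad \sum_{j=1}^{d_\ell} |\phi_{\ell,j}(x)|^2 = d_\ell,
\]
which in the rank-one setting follows from $2$-point homogeneity: the left-hand side is $G$-invariant on $\bS$ hence constant, and it integrates to $d_\ell$. This yields $\rho(R)\leq\widetilde R$ and $\max_r\|\phi_r^2\|_\infty\leq d_R$. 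Moreover the centered kernel $(W-W_R)^2-\E[(W-W_R)^2]$ is canonical: since $W(x,y)=p(xy^{-1})$ and the latent variables are Haar-distributed, $\E[(W-W_R)^2(x,X_1)]$ is $G$-invariant in $x$, hence constant. Feeding these into \corref{Canonical_Kernels}, with $\|W-W_R\|_2^2=\sum_{\ell>R}d_\ell(\bp_\ell^\star)^2$ from Parseval in the $Z_\ell$ basis and $\|W-W_R\|_\infty\leq 1$, produces the high-probability bound of the first claim. Optimizing $R\asymp(n/\log n)^{1/(2s+\mathbf d-1)}$, invoking the regularity assumption $\sum_{\ell>R}d_\ell(\bp_\ell^\star)^2\leq C R^{-2s}$ and the estimate $\widetilde R=\mathcal O(R^{\mathbf d-1})$ readable off Table~\ref{t:CSR1}, then delivers the announced expected rate.

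The second claim follows line-for-line from the proof of \thmref{New_Main_Sphere}, combining Weyl's perturbation inequality through \prpref{BvH}, the just-established convergence of $\lambda(\bT_n)$ towards $\lambda^\star$, and the fact that~\eqref{eq:Min_Admissible} projects onto the convex set $\mathcal M_R$ of spectra exhibiting the multiplicity pattern $(d_0,\dots,d_R)$. None of these ingredients refers to the sphere specifically; only the dimensions $d_\ell$ and $\widetilde R$ change. The same remark applies to the third claim: the Goldenshluger--Lepski bias-variance argument of \thmref{adap} and \corref{vitesse} uses only the second-moment bound just obtained and the definitions~\eqref{defGL1}--\eqref{defGL2}, so it transfers directly, with the same constant $\kappa_0$ inherited from the second claim.

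The main technical obstacle is the addition formula displayed above. On the sphere it is the classical addition theorem for spherical harmonics; in the general rank-one setting it is a consequence of Cartan--Helgason's extension of Peter--Weyl recalled in \secref{LatentSpaces} combined with the $G$-transitivity of $\bS$. A secondary verification concerns the canonical-kernel condition, which in the non-group ssccss case $\bS=G/K$ requires rewriting the expectation in terms of the Haar measure on $G$ and invoking the bi-$K$-invariance of $p$; no new inequality is needed, but the symmetry chain must be stated carefully.
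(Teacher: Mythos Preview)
Your proposal is correct and matches the paper's own proof almost exactly: both reduce to verifying the addition formula $\sum_j|\phi_{\ell,j}|^2=d_\ell$ (giving $\rho(\widetilde R)\leq\widetilde R$), checking the canonical-kernel condition via $G$-invariance, applying \corref{Canonical_Kernels}, and then transporting the sphere arguments for \thmref{New_Main_Sphere} and \thmref{adap} verbatim with the new $d_\ell,\widetilde R$. One small slip: the bound $\|W-W_R\|_\infty\leq 1$ is not justified, since the truncation $W_R$ need not take values in $[0,1]$; the paper instead uses $\|\bp-\bp^R\|_\infty\leq 1+\sqrt{2\widetilde R}$ via $|Z_\ell|\leq\sqrt{d_\ell}$ (the bound \eqref{eq:BorneZon} on zonal functions), which is still absorbed into the $\sqrt{\widetilde R\log(\widetilde R/\alpha)/n}$ term and leaves the conclusion unchanged.
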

A proof can be found in Appendix~\ref{app:SSCCSSversion}. Note that the same results as in Proposition~\ref{cor:variancepr} and Corollary~\ref{u} hold when $\bS$ is a compact symmetric space of rank one. Namely, adaptive estimation of the envelope function $\bp$ is possible when $\bp$ is a polynomial. 

\section{Numerical Experiments}\label{sec:numeric}
\subsection{Simulations}\label{simus}

In this section we shall assess the performances of our estimation procedure by estimating numerous envelope functions $\bp$. We consider the example of $\bS= \mathds S^{2}$, the unit sphere  in dimension $\bd=3$. 
The functions $G_\ell^\beta$ turn to be the Legendre polynomials and the dimension of the space of spherical harmonics of degree $\ell$ is $d_\ell= 2\ell +1$.  

First, we shall explain how our algorithm works in practice to compute the adaptive estimator $\widehat\bp^{\widehat R}$ of $\bp$, see~\eqref{eq:Min_Admissible} and~\eqref{pestimator}. For sake of clarity, we deal with a simple example.  Suppose we are given an adjacency matrix $\bA$ of size $20\times 20$ and we set $R_{\max}=1$. Thus  $n=20$, $d_0=1$ and $d_1=3$.

\pagebreak[3]

\begin{description}
\item[Step 1]
Compute the $20$ eigenvalues of $\bA$ and sort them in decreasing order $\lambda_{(1)} \geq  \dots \geq  \lambda_{(20)}$, see Figure~\ref{graphalgo}.
\item[Step 2]
Take $0\leq R\leq R_{\max}$. Generate $\mathfrak S_{R+2}$, the set of all permutation of~$\{\mathbf 0,d_0,\ldots,d_R\}$, the set with $R+2$ elements. The factor $+1$ in $R+2=(R+1)+1$ is due to the ‘‘zeros'' (represented by the symbol $\mathbf 0$) to be placed, see Step 3 for a proper definition. For instance, for $R=1$, we have 
 $$\mathfrak S_{R+2}= \left\{
\begin{array}{ ccc }
   \sigma_1= [d_1, & d_0, & \mathbf 0]\\
\sigma_2= [d_1, & \mathbf 0, & d_0 ]\\
\sigma_3 = [ d_0, & d_1, & \mathbf 0] \\
\sigma_4= [ d_0, & \mathbf 0, & d_1] \\
\sigma_5=  [\mathbf 0, & d_0, &d_1 ] \\
\sigma_6= [ \mathbf 0, & d_1, & d_0]
 \end{array}
\right.$$
\item[Step 3]
For each permutation $\sigma_{i}$, $i \in \{ 1,\dots, 6 \}$ of $\mathfrak S_{R+2}$, compute the following $(\widetilde \bp_{\sigma_i,\ell})_{\ell \in \{ 0,1,2\}}$  
which are the ‘‘stage means'' of the $\lambda_{(i)}$'s,  $i \in \{ 1,\dots, 20 \}$ according to the order of appearance of the $d_\ell$'s in the permutation $\sigma_i$. For instance, for $\sigma_1=  [d_1, \quad   d_0,  \quad  \mathbf 0  ] $ (see Figure \ref{graphalgo}), we get 
$$  \widetilde \bp_{\sigma_1,2} = \frac 1 3 \sum_{\ell=1}^3 \lambda_{(\ell)} , \quad \widetilde \bp_{\sigma_1,1} =\lambda_{(4)},\quad   \widetilde \bp_{\sigma_1,0}=0, $$
and for 
 $\sigma_4=  [d_0, \quad   \mathbf 0,  \quad  d_1  ]$  one gets
$$  \widetilde \bp_{\sigma_2,1} = \lambda_{(1)} ,\quad  \widetilde \bp_{\sigma_2,0}= 0 ,\quad  \widetilde \bp_{\sigma_2,2} =\frac 1 3 \sum_{i=18}^{20} \lambda_{(\ell)} . $$
In Step 2, we have called ‘‘zeros'' the fact that we always set $\widetilde \bp_{\sigma_i,0}= 0$.
\item[Step 4]
For each permutation $\sigma_i$, compute the corresponding vector $\widetilde \lambda_{\sigma_i}$ of size $20$, containing the~$\widetilde \bp_{\sigma_i,\ell}$ with multiplicity $d_\ell$. Then compute the risk $\mathrm{Score}(\sigma_i)$ for each $\sigma_i$. For example for $\sigma_1=  [d_1, \quad   d_0,  \quad  \mathbf 0  ] $  (see Figure \ref{graphalgo}), one gets
$$\widetilde \lambda_{\sigma_1}= (\underbrace{\widetilde \bp_{\sigma_1,2}, \widetilde \bp_{\sigma_1,2}, \widetilde  \bp_{\sigma_1,2}}_{d_1=3}, \underbrace{\widetilde \bp_{\sigma_1,1}}_{d_0=1}, \underbrace{0,\dots,0}_{n-d_0-d_1=16})$$
and its risk is $\displaystyle \mathrm{Score}({\sigma_1}) =  \sum_{\ell=1}^{20}(\lambda_{(\ell)}-\widetilde \bp_{\sigma_1,\ell} )^2$.
\item[Step 5]
Select the permutation $\sigma_{\min}$ such that $\displaystyle \sigma_{\min}= \argmin_{\sigma_i}\;  \mathrm{Score}({\sigma_i})$. 
\item[Step 6]
Get the estimate $\widehat \lambda^{R}$ defined by
$$ \widehat \lambda^{R} =(\underbrace{\widetilde \bp_{\sigma_{\min},1}}_{d_0=1}, \underbrace{\widetilde \bp_{\sigma_{\min},2}, \widetilde \bp_{\sigma_{\min},2}, \widetilde \bp_{\sigma_{\min},2}}_{d_1=3} )= (\widehat\bp_0^{R},\widehat\bp_1^{R}, \widehat\bp_1^{R}, \widehat\bp_1^{R})\,,
 $$
 see~\eqref{eq:Min_Admissible}.
 \item[Step 7]
 Iterate Steps 2 to 6 for $R=0$ to $R_{\max}$. Compute the level  $\widehat R$ according to~\eqref{defGL2} and the adaptive estimator  $\widehat\bp^{\widehat R}(t)$ according to~\eqref{pestimator}.
  \item[Step 8]
  Troncate $\widehat\bp^{\widehat R}(t)$ so as to it belongs to $[0,1]$. 
\end{description}

\pagebreak[3]

\begin{figure}[h]
\centering
\includegraphics[width=0.7\textwidth]{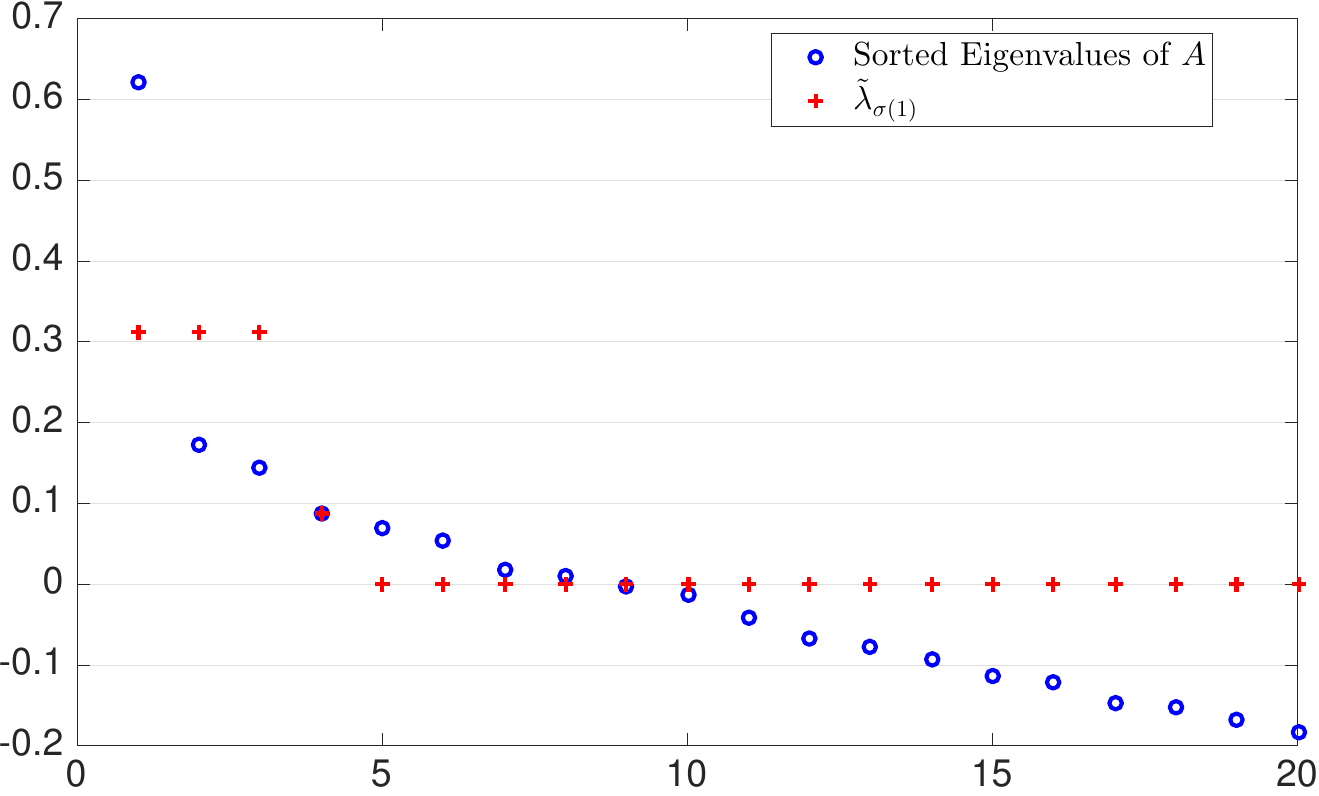}
\caption{Plot of the 20 sorted eigenvalues $\lambda_{(i)}$ of adjacency matrix $\bA$ and the values of vector~$\widetilde \lambda_{\sigma(1)}$.}
\label{graphalgo}
\end{figure}

Of course, the choice of level $R$ is crucial and the estimation is sensitive to $R$.  That is why we use our selection method, as described in Section~\ref{sec:Adaptation} (see Step~7 in the description of the algorithm above). As almost all estimators selection methods, this Goldenshluger-Lepski method uses an hyper-parameter $\kappa$. Our theoretical result ensures a good performance as soon as $\kappa$ is large enough, but it is well known that a more precise choice is better in practice. Heuristics exist to calibrate $\kappa$, but they are all based on the behavior of the estimator for very large $R$ (see for instance \cite{Baudry2012}). Hence these techniques  are not possible here, due to the computational cost of the estimation when $R$ is large (we can hardly consider larger than $R_{\max}=7$ because of the complexity in $(R_{\max}+2)!$).
Fortunately, the stability of the estimation allows us to choose here a fixed $\kappa$, namely $\kappa=0.25$, and this choice ensures good selection of $\hat R$ for a wide range of functions $\bp$.

Now, let us deal with the estimation of the six following envelope functions~$\bp$ 
\begin{align*}
\bp_1(t)&= \Big (\frac{1+t}{2} \Big)^4, \\
\bp_2(t)&= \mathds{1}_{t>0.7},\\ 
\bp_3(t)&= e^{-(t-1)^2}, \\ 
\bp_{4}(t)&=0.5+0.5 \sin \Big ({\pi t}/{2} \Big ), \\
\bp_5(t)&= \frac 1 3 +\frac{1}{12}\Big(35 t^4-30 t^2 +3\Big),\\
\bp_6(t)&= t^{10} \mathds{1}_{t>0}\,.
\end{align*}
 We consider graphs of size $n=5000$. We set $R_{\max}=4$ and $\kappa=0.25$ for the adaptive selection rule of $R$, see~\eqref{defGL2}.

 Figure \ref{graphp} presents our simulation results. For each envelope function $\bp$, we represent on the top side, the estimated coefficients $\widehat \bp_{\ell}^{ \widehat R}$  and the true coefficients~$\bp^\star_{\ell}$ with their multiplicity $2\ell +1$. On the bottom side, we represent the estimated envelope function $\widehat \bp $ and the true $\bp $. Note that our procedure is not constrained by dealing with envelope functions $\bp$ defining positive kernels~$W$. Such an example is given by  the step function $\bp_2$ as its Fourier coefficients $\bp_{2,\ell}^\star$'s can be negative, see Figure \ref{graphp}. 

The estimation of all functions are good except for the step function $\bp_2$ which is more demanding due to its discontinuity. Despite that function $\bp_6$ is not easy to be estimated because of its flatness, our estimation is satisfying. Furthermore, it is interesting to remark that except for $\bp_2$, the estimated coefficients are very close to the true ones. 



\begin{figure}[h!]
\begin{center}
\includegraphics[width=0.49\textwidth]{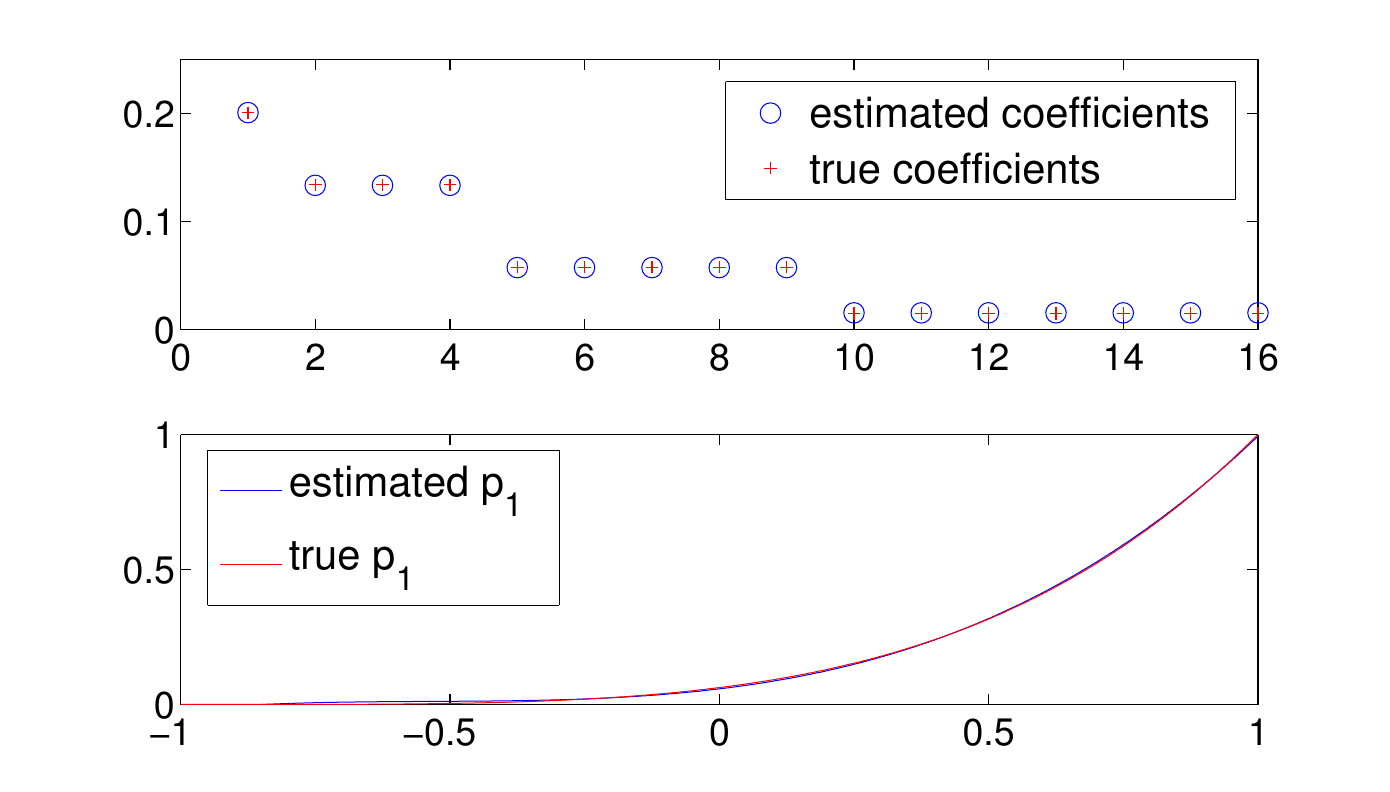}
 \includegraphics[width=0.49\textwidth]{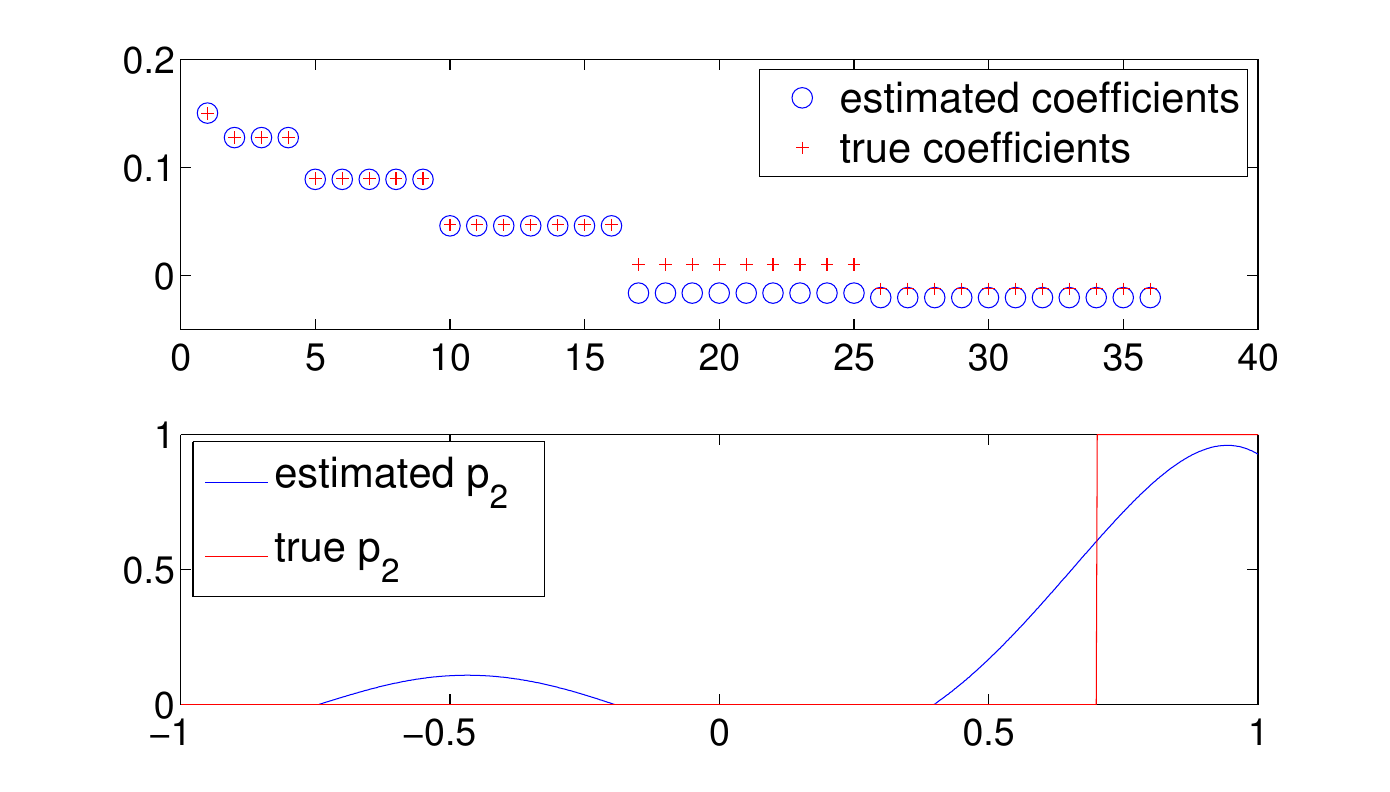}  \\
\includegraphics[width=0.49\textwidth]{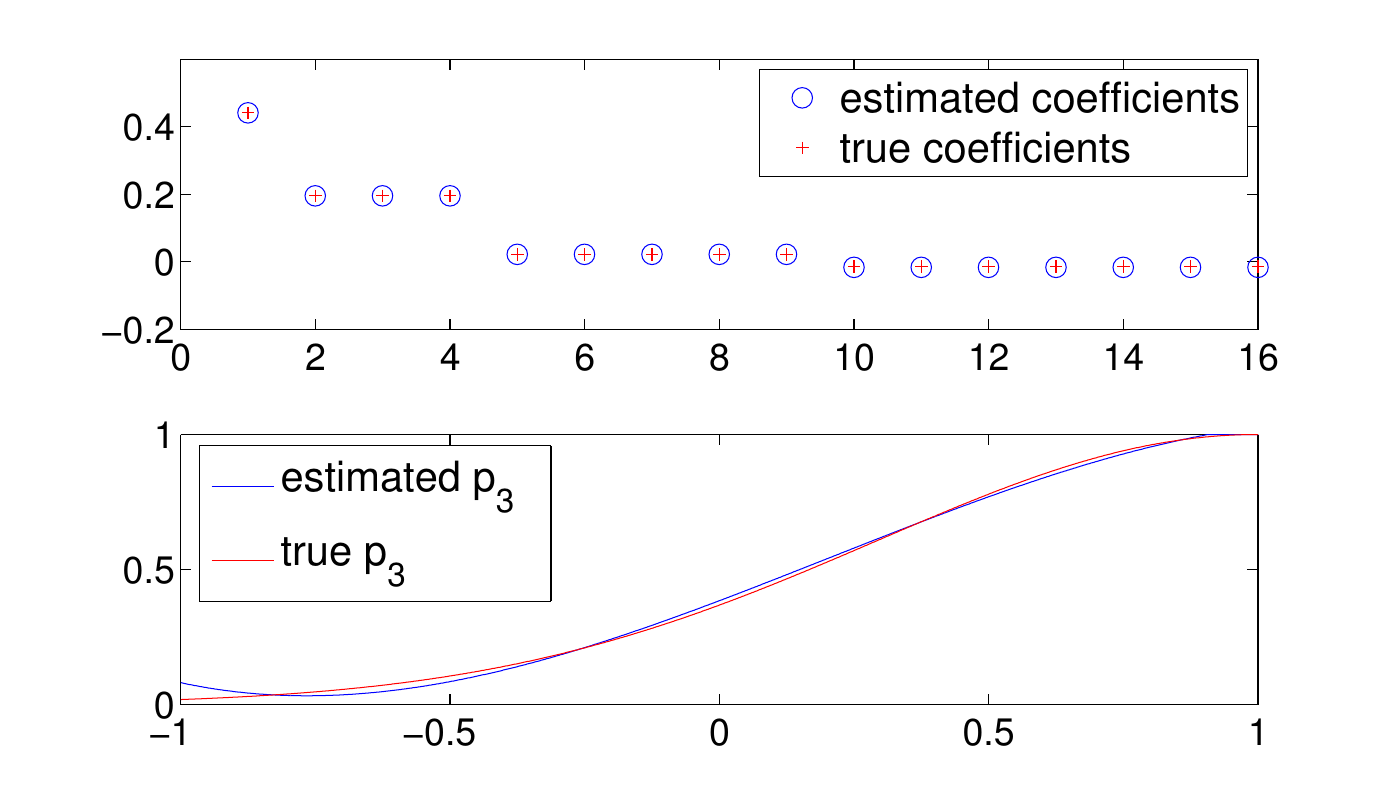} 
 \includegraphics[width=0.49\textwidth]{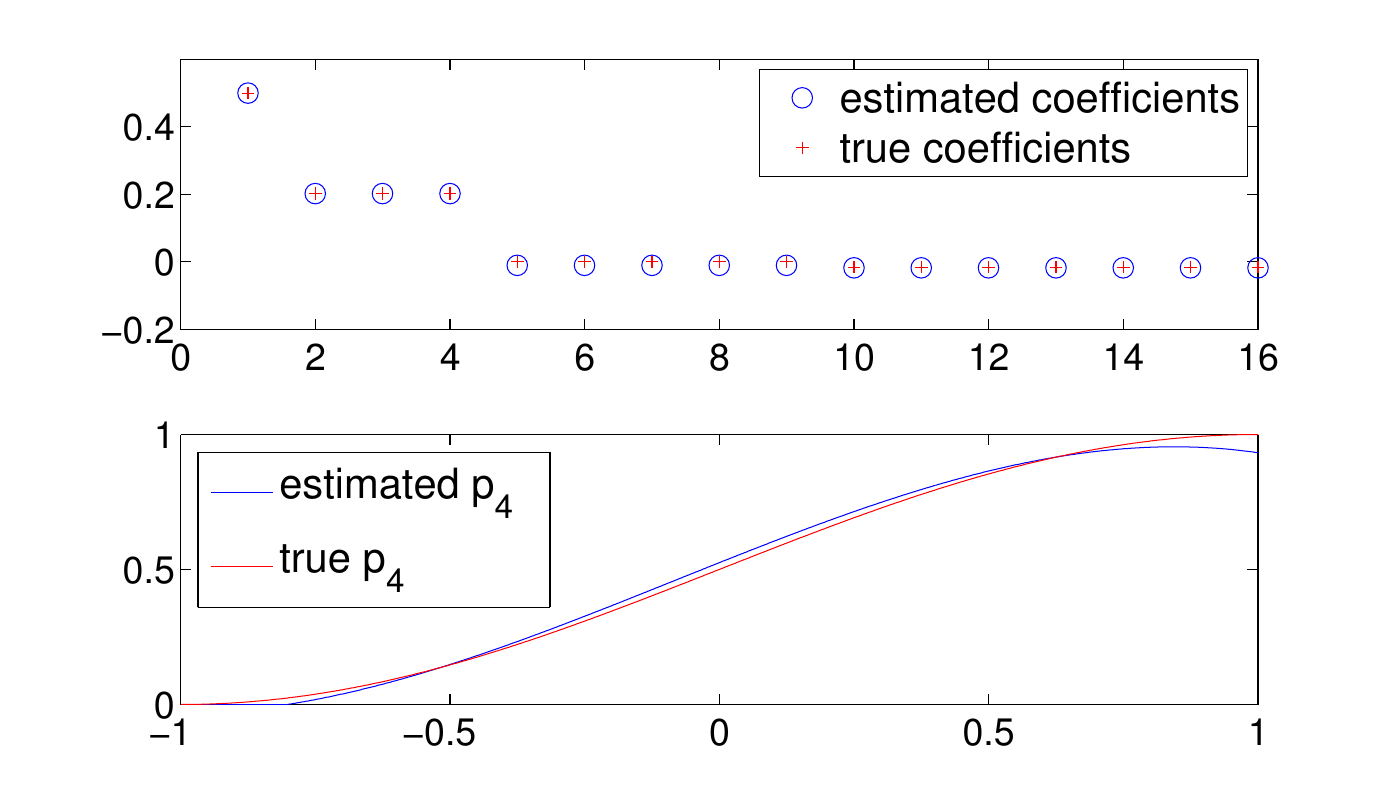} \\
\includegraphics[width=0.49\textwidth]{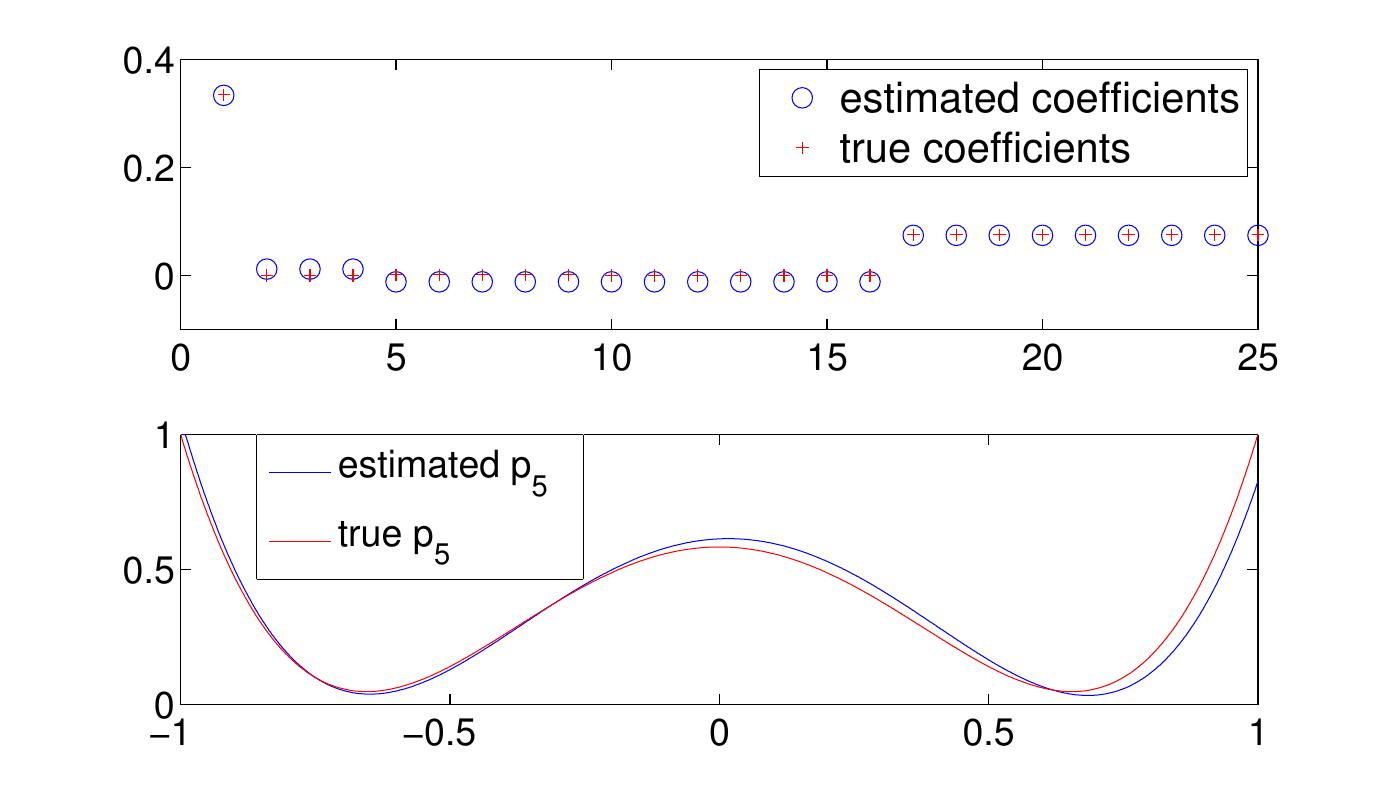} 
 \includegraphics[width=0.49\textwidth]{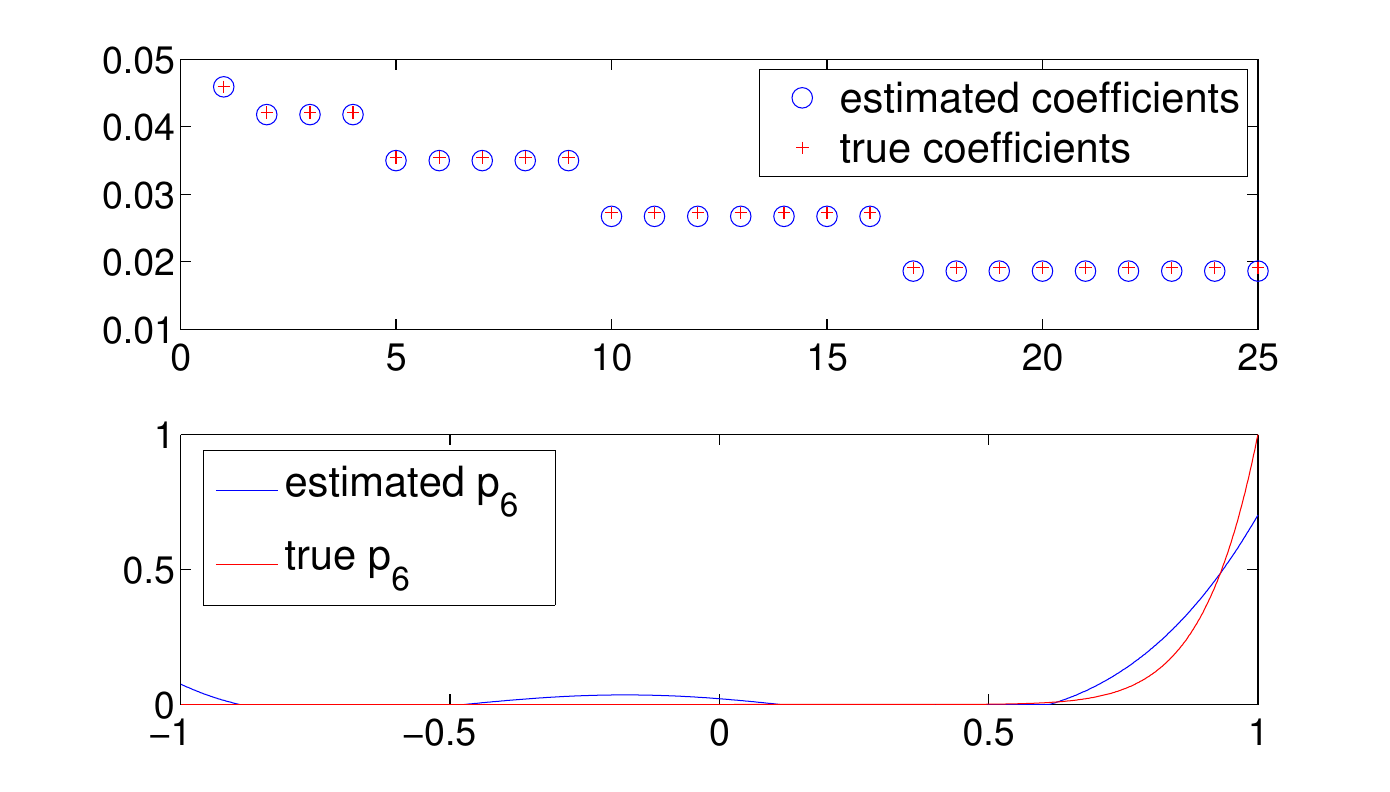}
\caption{Estimation of envelope functions $\bp_1, \dots, \bp_6$.}
\label{graphp}
\end{center}
\end{figure}

\subsection{How to speed the algorithm}\label{speed}

The aforementioned algorithm is a spectral algorithm with complexity $\mathcal O(n^3)$, which is standard. Costly steps might be {\bf Steps 2-5} which rely on evaluating the empirical risk function for all possible permutations of the ordering of the eigen-spaces. These steps are of complexity $\mathcal O(R^{\mathbf d})$, which is exponential on the number $R$ of estimated eigen-spaces. Yet, these steps are just meant to cluster empirical eigenvalues (on the real line) into groups of prescribed sizes $d_0,d_1,\ldots, d_R$ and one might be willing to use standard clustering approaches to perform this task. 

For instance a {\it Hierarchical Agglomerative Clustering} (HAC) with {\it Single Linkage} function should be able to identify {\it well separated} clusters of eigenvalues. Looking at the dimensions matching the $d_\ell$'s along the HAC tree, one might recover the right labeling of the eigenvalues. The success of these clustering approaches will rely on the event that eigenvalues are {\it well separated}. This notion has been investigated in \citep{valdivia2019latent} where the authors showed that, for $n$ sufficiently large and under mild assumptions (essentially spectral gaps between distinct eigenvalues), the empirical eigenvalues gather in clusters of size $d_\ell$. As a conclusion, for $n$ sufficiently large, one might use standard clustering approaches such as HAC, in place of the costly {\bf Steps 2-5}, to identify right clusters of empirical eigenvalues.

\bigskip

{\bf Acknowledgements:}\ 
The authors would like to thank Pierre Loïc Méliot for many useful discussions on compact symmetric spaces. 
We would like to thank the referees for their interesting and fruitful comments that helped us to improve this article.


 \bibliographystyle{imsart-nameyear}
 \bibliography{biblio}



\appendix

\begin{landscape}
{\ }\vfill
\renewcommand{\arraystretch}{2}
{   
\begin{table}[htp]
\begin{tabular}{|c|c|c|c|c|}
\hline
$\bS$ & $d_\ell$ & $\displaystyle\widetilde R=\sum_{0\leq \ell\leq R}d_\ell $ & $t=\cos(\gamma(x,e))$ &  Density $\mathbf w$ \\
\hline \hline 
$\bbS^{\mathbf d-1}$ & $\binom {\ell+\bd -1}{\ell} -\binom{\ell +\bd -3}{\ell -2}$
& $\frac{2R+\mathbf d-1}{R+\mathbf d-1}{ R+\mathbf d -1 \choose \mathbf d -1}$ & $ x_{\mathbf d}$  
& $\mathrm{Beta}(\frac{\mathbf d-1}2,\frac{\mathbf d-1}2)$ \\
\hline
$\bbR\bbP^{\mathbf d-1}$ & 
$ \frac{(6 + \bd^2 + 8\ell (2\ell-3) + \bd ( 8 \ell-5)) \Gamma(\bd + 2\ell-3)}{
\Gamma(\bd-1) \Gamma(1 + 2\ell)}$
& $\frac{4R+\mathbf d-1}{2R+\mathbf d-1}{ 2R+\mathbf d -1 \choose \mathbf d -1}$ 
& $ \frac{x_{\mathbf d}^2-x_{1}^2-\cdots-x_{\mathbf d-1}^2}{x_{1}^2+\cdots+x_{\mathbf d}^2}$     
& $\mathrm{Beta}(\frac{\mathbf d-1}2,\frac12)$\\
\hline
$\bbC\bbP^{\mathbf d-1}$ & $\frac{2\ell+\mathbf d}{\mathbf d}{ \ell+\mathbf d -1 \choose \mathbf d-1}^2-\frac{2\ell+\mathbf d-2}{\mathbf d}{ \ell+\mathbf d -2 \choose \mathbf d-1}^2$
& $\frac{2R+\mathbf d}{\mathbf d}{ R+\mathbf d -1 \choose \mathbf d-1}^2$ 
& $ \frac{|x_{\mathbf d}|^2-|x_{1}|^2-\cdots-|x_{\mathbf d-1}|^2}{|x_{1}|^2+\cdots+|x_{\mathbf d}|^2}$ 
& $\mathrm{Beta}({\mathbf d-1},1)$\\
\hline
$\bbH\bbP^{\mathbf d-1}$ 
& $ \frac{2 (\ell+1) (\bd ( 4 \bd^2-1) + 2 \bd (4 \bd-1) \ell + (4 \bd-1) \ell^2) \Gamma(2 \bd + \ell-1) \Gamma(
  2 \bd + \ell)}{\Gamma(2 \bd)\Gamma(2 + 2\bd) \Gamma(2 + \ell)^2} $
& $\frac{2R+2\mathbf d+1}{(2\mathbf d+1)(R+1)} {{R+2\mathbf d}\choose 2\mathbf d} {{R+2\mathbf d-1}\choose{2\mathbf d-1}}$
& $ \frac{|x_{\mathbf d}|^2-|x_{1}|^2-\cdots-|x_{\mathbf d-1}|^2}{|x_{1}|^2+\cdots+|x_{\mathbf d}|^2}$  
& $\mathrm{Beta}({2\mathbf d-2},2)$ \\
\hline
$\bbO\bbP^{2}$& 
$ \frac{(4 + \ell) (5 + \ell)^2 (6 + \ell)}{924}$ 
&  $\frac{2R+11}{385}{{R+7}\choose 4}{{R+10}\choose 10}$
& $ \frac{|x_{3}|^2-|x_{1}|^2-|x_{2}|^2}{|x_{1}|^2+|x_{2}|^2+|x_{3}|^2}$    
& $\mathrm{Beta}(8,4)$ \\
\hline
\end{tabular}
\medskip
\caption{Review of the dimensions $d_\ell$ of the spherical representations, the distance $\cos(\gamma(x,e))$ to the identity $e$, the weight function $\bw(t)$ 
of the compact symmetric spaces $\bS$ of rank~$1$. These latter are respectively the density $\mathbf w$ and the orthonormal polynomials of the beta law on~$[-1,1]$ with shape parameters $(\alpha,\beta)$, see~\eqref{eq:Beta}.}
\label{t:CSR1}
\end{table}%
}
\vfill{\ }
\end{landscape}

\section{Proofs}
\label{sec:proofs}
\subsection{Proof of Proposition~\ref{prop:BvH}}
\label{proof:BvH}
This result is a consequence of \cite[Corollary 3.12]{bandeira2014sharp} and \cite[Remark 3.13]{bandeira2014sharp} 
with $X_{ij}=\bA_{ij}-(\bTheta_0)_{ij}$ a centered but not symmetric random variable, $\varepsilon=1/2$ say, $\widetilde\sigma^2=\bD_0$ by definition, and observing that $ \widetilde\sigma^2_*=\max_{ij}((\bTheta_0)_{ij}\vee(1-(\bTheta_0)_{ij}))\leq1$. It gives
\[
\forall t>0,\quad
\pr{\|\bA-\bTheta_0\|\geq3\sqrt{2\bD_0}+Ct}\leq n\exp(-t^2)\,,
\]
for some universal constant $C>0$.

\subsection{Proof of Theorem~\ref{thm:KG_revisited}}
\label{proof:KG_revisited}
Let $R\geq1$ and define
\begin{align*}
\Phi_i^n		&:=(1/\sqrt n)(\phi_i(X_1),\ldots,\phi_i(X_n))\in\bbR^n,\\
K_R			&:=\mathrm{Diag}(\lambda_1(\bbT_W)\ldots,\lambda_R(\bbT_W))\in\bbR^{R\times R},\\
E_{R,n}		&:=(\langle\Phi_i^n,\Phi_j^n\rangle-\delta_{ij})_{i,j\in[R]}\in\bbR^{R\times R},\\
X_{R,n}		&:=\Big[\Phi_1^n\cdots \Phi_R^n\Big]\in\bbR^{n\times R},\\
A_{R,n}		&:=\Big(X_{R,n}^\top X_{R,n}\Big)^{\frac12}\in\bbR^{R\times R} \mbox{ and note that }A_{R,n}^2=\mathrm{Id}_R+E_{R,n},\\
\bT_{R,n}		&:=\sum_{r=1}^R\lambda_r(\bbT_W)\Phi_r^n(\Phi_r^n)^\top=X_{R,n}K_RX_{R,n}^\top\in\bbR^{n\times n},\\
\widetilde{\bT}_{R,n}&:=((1-\delta_{ij}) \bT_{R,n})_{i,j\in[n]}\in\bbR^{n\times n},\\
\bT^\star_{R,n}	&:=A_{R,n}K_RA_{R,n}\in\bbR^{R\times R},\\
\mathrm{and\ }W_R(x,y)&:=\sum_{i=1}^R\lambda_i(\bbT_W)\phi_i(x)\phi_i(y),
\end{align*}
where the last identity holds point-wise. Observe that $A_{R,n}^2=\mathrm{Id}_R+E_{R,n}$. It holds
\beq
\label{eq:proof_delta2_0}
\delta_2(\lambda(\bbT_W),\lambda(\bbT_{W_R}))=\Big(\sum_{r>R}\lambda_r^2(\bbT_W)\Big)^{\frac12}\,.
\eeq
Note the equalities between spectra $\lambda(\bbT_{W_R})=\lambda(K_R)$ and $\lambda(\bT_{R,n})=\lambda(\bT_{R,n}^\star)$ where the last one follows by using a SVD of $X_{R,n}$. Hence, we deduce that
\beq
\notag
\delta_2(\lambda(\bbT_{W_R}),\lambda(\bT_{R,n}))=\delta_2(\lambda(K_R),\lambda(\bT_{R,n}^\star))\leq\|\bT_{R,n}^\star-K_R\|_F=\|A_{R,n}K_RA_{R,n}-K_R\|_F\,,
\eeq
by Hoffman-Wielandt inequality, see \cite[page 118]{koltchinskii2000random} for instance. Equation~(4.8) at \cite[page 127]{koltchinskii2000random} gives that
\beq
\label{eq:proof_delta2}
\delta_2(\lambda(\bbT_{W_R}),\lambda(\bT_{R,n}))\leq\sqrt2\|K_R\|_F\|E_{R,n}\|=\sqrt2\|W_R\|_2\|E_{R,n}\|\,,
\eeq
Actually, one can remove the constant $\sqrt 2$ using Ostrowski's theorem, see \cite[Theorem A.2]{braun2006accurate} for instance. Also, by  Hoffman-Wielandt inequality, we have
\beq
\label{eq:proof_delta2_2}
\delta_2(\lambda(\bT_{R,n}),\lambda(\widetilde{\bT}_{R,n}))\leq\|\widetilde{\bT}_{R,n}-{\bT}_{R,n}\|_F
=\Big[\frac1{n^2}\sum_{i=1}^nW_R^2(X_i,X_i)\Big]^{\frac12}\,,
\eeq
and
\beq
\label{eq:proof_delta2_3}
\delta_2(\lambda(\widetilde{\bT}_{R,n}),\lambda(\bT_{n}))\leq\|\widetilde{\bT}_{R,n}-{\bT}_{n}\|_F=\Big[\frac1{n^2}\sum_{i\neq j}(W-W_R)^2(X_i,X_j)\Big]^{\frac12}\,.
\eeq


Invoke Lemma~\ref{lem:Lemma1_KG_revisited} to bound~\eqref{eq:proof_delta2}, Lemma~\ref{lem:Lemma1b_KG_revisited} to bound~\eqref{eq:proof_delta2_2} and Lemma~\ref{lem:Lemma2_KG_revisited} to bound~\eqref{eq:proof_delta2_3}.
\begin{lemma}
\label{lem:Lemma1_KG_revisited}
Let $R\geq1$ and denote by $\rho(R):=\max(1,\|\sum_{r=1}^R\phi_r^2\|_\infty-1)$ then it holds
\[
\forall t>0,\quad
\pr{\|E_{R,n}\|\geq t}
\leq 2R\exp\Big[-\frac n{2\rho(R)}\frac{t^2}{1+t/(3n)}\Big]\,.
\]
In particular, for all $\alpha\in(0,1)$ and for $n^3\geq \rho(R)\log(2R/\alpha)$, it holds
\[
\pr{\|E_{R,n}\|\geq \sqrt{\frac{\rho(R)\log(2R/\alpha)}n}}
\leq \alpha\,.
\]
\end{lemma}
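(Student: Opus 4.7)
The plan is to recognize $E_{R,n}$ as a normalized sum of $n$ centered i.i.d.\ self-adjoint random matrices in $\mathbb{R}^{R\times R}$ and then apply Tropp's matrix Bernstein inequality (the version in \cite{tropp2012user} already cited in the body of the paper). This is the natural tool because the bound we want to prove has exactly the shape ``$\exp(-nt^2/(v+Lt/\text{const}))$'' characteristic of Bernstein-type concentration, with the two parameters $L$ (almost-sure norm bound) and $v$ (matrix variance) both equal to $\rho(R)/n$ after normalization.

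\textbf{Step 1: a sum decomposition.} For each $k\in[n]$, put $v_k:=(\phi_1(X_k),\ldots,\phi_R(X_k))^{\top}\in\mathbb R^R$, so that $\langle\Phi_i^n,\Phi_j^n\rangle=\frac1n\sum_{k=1}^n(v_k v_k^{\top})_{ij}$. Setting $Z_k:=\frac1n(v_k v_k^{\top}-I_R)$, the orthonormality of $(\phi_r)_{r\in[R]}$ in $\mathbf L^2(\bS,\bsigma)$ gives $\E[v_k v_k^{\top}]=I_R$, so $\E[Z_k]=0$ and $E_{R,n}=\sum_{k=1}^n Z_k$ is a sum of independent centered self-adjoint random matrices.

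\textbf{Step 2: almost-sure bound on each $Z_k$.} Since $v_k v_k^{\top}$ is a rank-one positive semidefinite matrix with unique nonzero eigenvalue $\|v_k\|^2=\sum_{r=1}^R\phi_r(X_k)^2$, the eigenvalues of $v_k v_k^{\top}-I_R$ are $\|v_k\|^2-1$ and $-1$ (with multiplicity $R-1$). Hence
\[
\|v_k v_k^{\top}-I_R\|\leq \max\bigl(1,\|v_k\|^2-1\bigr)\leq \max\bigl(1,\bigl\|{\textstyle\sum_{r=1}^R}\phi_r^2\bigr\|_\infty-1\bigr)=\rho(R),
\]
so $\|Z_k\|\leq L:=\rho(R)/n$ almost surely.

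\textbf{Step 3: matrix variance.} Expanding $(v v^{\top}-I_R)^2=\|v\|^2 v v^{\top}-2vv^{\top}+I_R$ and taking expectations yields $\E[(vv^{\top}-I_R)^2]=\E[\|v\|^2 vv^{\top}]-I_R$. Because $\|v\|^2\leq\|\sum_r\phi_r^2\|_\infty$ pointwise, one has $\E[\|v\|^2 vv^{\top}]\preceq\|\sum_r\phi_r^2\|_\infty\,\E[vv^{\top}]=(\rho(R)+1)I_R$ in the semidefinite order when $\rho(R)\geq1$, so $\E[(vv^{\top}-I_R)^2]\preceq \rho(R)\,I_R$ and consequently
\[
\Bigl\|\sum_{k=1}^n \E[Z_k^2]\Bigr\|\leq \frac{\rho(R)}{n}=:v.
\]

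\textbf{Step 4: matrix Bernstein and the explicit probability bound.} Applying Tropp's matrix Bernstein inequality to the sum $E_{R,n}=\sum_k Z_k$ with parameters $L=v=\rho(R)/n$ and dimension $R$ gives the claimed tail bound (after simplifying the ratio $t^2/2$ divided by $v+Lt/3$). For the second assertion, substitute $t=\sqrt{\rho(R)\log(2R/\alpha)/n}$; the hypothesis $n^3\geq\rho(R)\log(2R/\alpha)$ is exactly what is needed to control the correction factor in the Bernstein denominator so that the exponent is at least $\log(2R/\alpha)$, yielding $\P(\|E_{R,n}\|\geq t)\leq\alpha$.

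\textbf{Main obstacle.} None of the individual steps is subtle; everything is reduced to a direct invocation of matrix Bernstein. The only care required is in identifying the correct bounds $L$ and $v$ and in exploiting the definition of $\rho(R)$ with the $\max(1,\cdot)$ to cover both the eigenvalue $\|v_k\|^2-1$ (which can be large) and the eigenvalue $-1$ (which is bounded by $1$). The algebraic simplifications relating the Bernstein exponent to $\rho(R)\log(2R/\alpha)/n$, and checking that the stated cubic condition on $n$ suffices for the deviation regime, are routine.
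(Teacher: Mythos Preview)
Your proposal is correct and follows essentially the same route as the paper: write $E_{R,n}$ as a sum of i.i.d.\ centered matrices $v_kv_k^\top-I_R$ (the paper works with $nE_{R,n}$ instead, a cosmetic difference), bound both the almost-sure norm and the matrix variance by $\rho(R)$ via the rank-one structure, and invoke Tropp's matrix Bernstein inequality. The only discrepancy is presentational\textemdash the paper applies Bernstein to the unnormalized sum and substitutes $t\leftarrow nt$, while you normalize first\textemdash but the computations and the resulting bound are identical.
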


\begin{lemma}
\label{lem:Lemma1b_KG_revisited}
Let $R\geq1$ and $\alpha\in(0,1)$ then, with probability at least $1-\alpha$, it holds
\[
\frac1{n^2}\sum_{i=1}^nW_R^2(X_i,X_i)\leq
\Bigg[1+\max_{1\leq r\leq R}||\phi^2_r||_\infty \sqrt{\frac{\log(R/\alpha)}{{2n}}}\Bigg]\frac{2\rho(R)||W_R||^2}{n}\,.
\]
\end{lemma}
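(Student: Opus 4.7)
The plan is to view $Z_i := W_R^2(X_i,X_i)$ as i.i.d.\ nonnegative random variables and apply Hoeffding's inequality, after controlling the expectation $\mathbb{E}[Z_1]$ and a uniform bound $M\geq \|Z_1\|_\infty$.

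The key observation is that a single Cauchy-Schwarz estimate delivers both bounds at once. Writing $W_R(x,x)=\sum_{r=1}^R \lambda_r^\star \phi_r(x)^2$ and applying Cauchy-Schwarz to this sum yields
\[
W_R(x,x)^2 \leq \Big(\sum_{r=1}^R \phi_r(x)^2\Big)\Big(\sum_{r=1}^R (\lambda_r^\star)^2\,\phi_r(x)^2\Big).
\]
The first factor is bounded pointwise by $\bigl\|\sum_{r=1}^R \phi_r^2\bigr\|_\infty \leq \rho(R)+1 \leq 2\rho(R)$ by the very definition of $\rho(R)$ and the fact that $\rho(R)\geq 1$. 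Taking expectations in the displayed inequality and using the orthonormality identity $\mathbb{E}[\phi_r(X_1)^2]=1$ in the second factor then gives $\mathbb{E}[Z_1] \leq 2\rho(R)\|W_R\|_2^2$. Alternatively, bounding the second factor deterministically by $\max_{1\leq r\leq R}\|\phi_r^2\|_\infty\,\sum_{r=1}^R(\lambda_r^\star)^2 = \max_{1\leq r\leq R}\|\phi_r^2\|_\infty \|W_R\|_2^2$ produces the uniform bound $\|Z_1\|_\infty \leq M := 2\rho(R)\|W_R\|_2^2\,\max_{1\leq r\leq R}\|\phi_r^2\|_\infty$.

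With $\mathbb{E}[Z_1]$ and $M$ in hand, I apply Hoeffding's inequality to $\bar Z_n := \frac{1}{n}\sum_{i=1}^n Z_i$ with deviation $t = M\sqrt{\log(R/\alpha)/(2n)}$, so that $\bar Z_n \leq \mathbb{E}[Z_1] + t$ holds with probability at least $1-\exp(-\log(R/\alpha)) = 1-\alpha/R \geq 1-\alpha$. Substituting the two bounds above, factoring out $2\rho(R)\|W_R\|_2^2$, and dividing by $n$ yields exactly the claimed inequality. No step is delicate; the only substantive trick is the single Cauchy-Schwarz split that simultaneously exposes $\sum_r \phi_r^2$ (hence $\rho(R)$) and $\max_r \|\phi_r^2\|_\infty$, which are precisely the two quantities appearing in the target bound.
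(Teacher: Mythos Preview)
Your proof is correct and in fact more direct than the paper's. The paper rewrites $\frac{1}{n^2}\sum_i W_R^2(X_i,X_i)=x^\top \bA x$ with $x=(\lambda_r^\star/\sqrt n)_r$ and $\bA_{rs}=\frac1n\sum_i\phi_r^2(X_i)\phi_s^2(X_i)$, bounds $\|\bA\|$ by its maximal row sum via Perron--Frobenius, factors out $\sum_s\phi_s^2\leq 1+\rho(R)$, and then applies Hoeffding separately to each empirical mean $\frac1n\sum_i\phi_r^2(X_i)$ before taking a union bound over $r\in[R]$; this union bound is the source of the $\log(R/\alpha)$ in the statement. You instead apply a single Cauchy--Schwarz to $W_R(x,x)=\sum_r\lambda_r^\star\phi_r(x)^2$, which simultaneously yields the expectation bound $\E[Z_1]\leq 2\rho(R)\|W_R\|_2^2$ and the uniform bound $\|Z_1\|_\infty\leq 2\rho(R)\|W_R\|_2^2\max_r\|\phi_r^2\|_\infty$, and then apply one Hoeffding inequality to the i.i.d.\ sum $\sum_i Z_i$. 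Your route is shorter, avoids the matrix detour, and actually shows that $\log(R/\alpha)$ could be replaced by $\log(1/\alpha)$ (you kept the $R$ only to match the stated constant); the paper's approach genuinely needs the factor $R$ inside the logarithm because of the union bound.
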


\begin{lemma}
\label{lem:Lemma2_KG_revisited}
It holds, for all $\alpha\in(0,1)$,
\[
\pr{\frac1{n(n-1)}\sum_{i\neq j}(W-W_R)^2(X_i,X_j)\geq\sum_{r>R}\lambda_r^2(\bbT_W)+
\|W-W_R\|_\infty^2
\sqrt{\frac{\log(2/\alpha)}{n-1}}}\leq\alpha\,.
\]
\end{lemma}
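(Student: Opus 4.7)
The observation that opens this proof is that $V_n := \frac{1}{n(n-1)}\sum_{i\neq j}(W-W_R)^2(X_i,X_j)$ is a U-statistic of order $2$ with symmetric bounded kernel $h(x,y):=(W-W_R)^2(x,y)\in[0,\|W-W_R\|_\infty^2]$, based on the i.i.d.\ sample $X_1,\ldots,X_n\sim\bsigma$. The overall strategy is therefore to identify $\E V_n$ and then apply a classical Hoeffding-type bound for U-statistics.

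First, I would compute the mean. By Fubini and the i.i.d.\ assumption, $\E V_n = \E\bigl[(W-W_R)^2(X_1,X_2)\bigr] = \|W-W_R\|_2^2$. Using the spectral decomposition~\eqref{eq:Spectral_Decomposition}, one has $W-W_R = \sum_{r>R}\lambda_r^\star\,\phi_r\otimes\phi_r$ in $\bL^2(\bS\times\bS,\bsigma\otimes\bsigma)$, and since $(\phi_r\otimes\phi_r)_{r\geq 1}$ is an orthonormal family in this space, Parseval's identity yields
\[
\E V_n \;=\; \|W-W_R\|_2^2 \;=\; \sum_{r>R}(\lambda_r^\star)^2,
\]
which matches the first term in the stated inequality.

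Second, I would invoke Hoeffding's inequality for U-statistics of order $m=2$: whenever $a\leq h\leq b$,
\[
\pr{V_n-\E V_n \geq t}\;\leq\;\exp\!\Bigl(-\tfrac{2\lfloor n/2\rfloor\, t^2}{(b-a)^2}\Bigr)\;\leq\;\exp\!\Bigl(-\tfrac{(n-1)t^2}{(b-a)^2}\Bigr).
\]
Applied with $a=0$, $b=\|W-W_R\|_\infty^2$, and choosing $t=\|W-W_R\|_\infty^2\sqrt{\log(2/\alpha)/(n-1)}$, this gives the stated bound; the $\log(2/\alpha)$ versus $\log(1/\alpha)$ discrepancy is absorbed by standard constant slack (or alternatively by bounding the two-sided deviation through a union bound).

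There is essentially no serious obstacle here: the kernel $h$ is deterministic, symmetric and uniformly bounded, so the classical Hoeffding U-statistic inequality applies directly without any decoupling or symmetrization. The only conceptual point worth noting is that, in contrast to Corollary~\ref{cor:Canonical_Kernels}, we do \emph{not} assume the centered kernel $h-\E h$ to be canonical/degenerate, which is why the deviation rate obtained here is the Hoeffding rate $n^{-1/2}$ rather than the faster $n^{-1}$ available in the canonical case.
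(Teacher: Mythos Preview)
Your proposal is correct and follows essentially the same approach as the paper: apply Hoeffding's inequality for bounded U-statistics of order~2 to the kernel $h=(W-W_R)^2$, identify $\E h(X_1,X_2)=\|W-W_R\|_2^2=\sum_{r>R}\lambda_r^2(\bbT_W)$, and use $\|h\|_\infty=\|W-W_R\|_\infty^2$. Your remark contrasting this with the canonical-kernel case of Corollary~\ref{cor:Canonical_Kernels} is also in line with the paper's treatment.
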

\noindent
These lemmas are proven in Appendix~\ref{proof:Lemma1KG}, Appendix~\ref{proof:Lemma1bKG} and Appendix~\ref{proof:Lemma2KG}. Collecting~\eqref{eq:proof_delta2_0},~\eqref{eq:proof_delta2},~\eqref{eq:proof_delta2_2} and~\eqref{eq:proof_delta2_3}, the triangular inequality gives the result.

\subsection{Proof of Lemma~\ref{lem:Lemma1_KG_revisited}}
\label{proof:Lemma1KG}
Observe that $nE_{R,n}=\sum_{i=1}^n(Z_R(X_i)Z_R^\top(X_i)-\mathrm{Id}_R)$ is a sum of independent centered symmetric matrices where we denote by $Z_R(x):=(\phi_1(x),\ldots,\phi_R(x))$. In particular, $Z_R(X_i)Z_R^\top(X_i)$ are rank one matrices so that it holds
\begin{align*}
\| Z_R(X_i)Z_R^\top(X_i)-\mathrm{Id}_R\|&=1\vee\big(\|Z_R(X_i)\|_2^2-1\big)\\
&=1\vee\big((\sum_{r=1}^R\phi_r^2(X_i))-1\big)\\
&\leq
1\vee\big(\|\sum_{r=1}^R\phi_r^2\|_\infty-1\big)=: \rho(R)\,.
\end{align*}
Moreover, one has
\begin{align*}
\sigma_{R,n}^2:&=n\|\expect{(Z_R(X_1)Z_R^\top(X_1)-\mathrm{Id}_R)^2}\|\\
&=n\|\expect{\|Z_R(X_1)\|_2^2Z_R(X_1)Z_R^\top(X_1)-2Z_R(X_1)Z_R^\top(X_1)-\mathrm{Id}_R}\|\\
&=n\|\expect{\|Z_R(X_1)\|_2^2Z_R(X_1)Z_R^\top(X_1)}-\mathrm{Id}_R\|\\
&\leq n\Big\|\|\sum_{r=1}^R\phi_r^2\|_\infty\expect{Z_R(X_1)Z_R^\top(X_1)}-\mathrm{Id}_R\Big\|\\
&= n\Big\|\|\sum_{r=1}^R\phi_r^2\|_\infty\mathrm{Id}_R-\mathrm{Id}_R\Big\|\\
&= n\Big|\|\sum_{r=1}^R\phi_r^2\|_\infty-1\Big|\leq n  \rho(R)
\end{align*}
where we invoke that a.s. $\|Z_R(X_1)\|_2^2Z_R(X_1)Z_R^\top(X_1)\preccurlyeq
(\|\sum_{r=1}^R\phi_r^2\|_\infty)Z_R(X_1)Z_R^\top(X_1) $. It follows from the matrix Bernstein inequality given in \cite[Theorem 6.1.1]{tropp2012user}
\[
\forall t>0,\quad
\pr{\|E_{R,n}\|\geq t}
\leq 2R\exp\Big[-\frac n{2 \rho(R)}\frac{t^2}{1+t/(3n)}\Big]\,.
\]
Indeed, we have used \cite[Theorem 6.1.1]{tropp2012user} with 
\begin{align*}
X_k&\leftarrow Z_R(X_i)Z_R^\top(X_i)-\mathrm{Id}_R,\\
R&\leftarrow \rho(R),\\
Y&\leftarrow nE_{R,n},\\
\sigma^2\leq n||\expect{X_1^2}||&\leftarrow \sigma_{R,n}^2,\\
t&\leftarrow nt,
\end{align*}
according to the notation of \cite{tropp2012user} on the left hand side and our notation on the right hand side.
It proves the lemma. 

\subsection{Proof of Lemma~\ref{lem:Lemma1b_KG_revisited}}
\label{proof:Lemma1bKG}
Observe that
\begin{align*}
\frac1{n^2}\sum_{i=1}^nW_R^2(X_i,X_i)
&=
\frac1{n^2}\sum_{i=1}^n\big(\sum_{r=1}^R\lambda_r(\bbT_W)\phi_r^2(X_i)\big)^2\notag\\
&=\frac1{n^2}
\sum_{r,s\in[R]}\lambda_r(\bbT_W)\lambda_s(\bbT_W)
\big(\sum_{i=1}^n\phi_r^2(X_i)\phi_s^2(X_i)
\big)\\
&=x^\top \bA x\leq||\bA||||x||_2^2
\end{align*}
with $x=(\lambda_1(\bbT_W)/\sqrt n,\ldots,\lambda_R(\bbT_W)/\sqrt n)$ and $\bA=((1/n)\sum_{i=1}^n\phi_r^2(X_i)\phi_s^2(X_i))_{r,s}$. Note that $\bA$ is an irreducible and aperiodic matrix since its coefficients are positive. It follows by Perron-Frobenius theorem that
\[
||\bA||\leq\frac1n\max_{1\leq r\leq R}(\sum_{s=1}^R\sum_{i=1}^n\phi_r^2(X_i)\phi_s^2(X_i))
\]
Now, this last quantity can be upper bounded as follows
\begin{align*}
\frac1n\sum_{s=1}^R\sum_{i=1}^n\phi_r^2(X_i)\phi_s^2(X_i)&=\frac1n\sum_{i=1}^n\phi_r^2(X_i)(\sum_{s=1}^R\phi_s^2(X_i)),\\
&\leq(\frac1n\sum_{i=1}^n\phi_r^2(X_i))(1+\rho(R)).
\end{align*}
Using the bound 
\[
\phi_r^2(X_1)\leq \max_{1\leq r\leq R}{||\phi_r^2||_\infty}=:a_R\,,
\]
and Hoeffding inequality \cite[page 34]{boucheron2013concentration}, we deduce that

\[
\forall t>0,\quad
\pr{\frac1{n}\sum_{i=1}^n\phi_r^2(X_i)>\expect{\phi_r^2(X_1)}+t}\leq\exp\big(-\frac{2nt^2}{a_R^2}\big)\,.
\]
Observe that $\expect{\phi_r^2(X_1)}=1$. Let $\alpha\in(0,1)$, choosing $t^2=a_R^2\log(R/\alpha)/(2n)$ and taking an union bound, it holds that
\begin{align*}
\pr{\forall r\in[R],\quad \frac1{n}\sum_{i=1}^n\phi_r^2(X_i)\leq1+ 
\frac{a_R\log^{\frac12}(R/\alpha)}{\sqrt{2n}}}
\geq 1-\alpha
\end{align*}
It results in 
\[
\pr{||\bA||\leq\Big(1+ \frac{(1+\rho(R))\log^{\frac12}(R/\alpha)}{\sqrt{2n}}\Big)(1+\rho(R))}
\geq 1-\alpha
\]
On this event, we deduce that
\begin{align*}
\frac1{n^2}\sum_{i=1}^nW_R^2(X_i,X_i)
&\leq||\bA||||x||_2^2,\\
&\leq\Big(1+ \frac{a_R\log^{\frac12}(R/\alpha)}{\sqrt{2n}}\Big)\frac{(1+\rho(R))||W_R||^2}{n}\,,
\end{align*}
which gives the result.

\subsection{Proof of Lemma~\ref{lem:Lemma2_KG_revisited}}
\label{proof:Lemma2KG}
By a standard inequality of Hoeffding \citep{hoeffding1963probability}, 
for a bounded kernel $h$, for all $\alpha\in(0,1)$,
\[
\pr{\Big|\frac1{n(n-1)}\sum_{i\neq j}h(X_i,X_j)-\expect{h(X_1,X_2)}\Big|>\|h\|_\infty\sqrt{\frac{\log(2/\alpha)}{n-1}}}\leq\alpha
\]
Applying this result for $h=(W-W_R)^2$ and noticing that
\begin{itemize}
\item $\expect{h(X_1,X_2)}=\|W-W_R\|_2^2=\sum_{r>R}\lambda_r^2(\bbT_W)$,
\item $\|h\|_\infty=\|W-W_R\|_\infty^2$,
\end{itemize}
the result follows.

\subsection{Proof of Corollary~\ref{cor:Canonical_Kernels}}
\label{proof:CanonicalKernels}
The symmetric kernel $h:=(W-W_{ R})^2-\expect{(W-W_{R})^2}$ is $\sigma$-canonical, see \cite[Definition 3.5.1]{de2012decoupling} for a definition. The following important improvement of Hoeffding's inequalities for canonical kernels was proved by  \cite{arcones1993limit}, it holds that there exists two universal constants $C_1>0$ and $C_2>0$ such that for all $\alpha\in(0,1)$,
\[
\pr{\Big|\frac1{n(n-1)}\sum_{i\neq j}h(X_i,X_j)\Big|>C_1\|h\|_\infty{\frac{\log(C_2/\alpha)}{n}}}\leq\alpha\,.
\]
We deduce that it holds, for all $\alpha\in(0,1)$,
\[
\pr{\frac1{n(n-1)}\sum_{i\neq j}(W-W_{R})^2(X_i,X_j)\geq
\|W-W_{R}\|^2_{2}
+
C_1
\|W-W_{R}\|_\infty^2
\frac{\log(C_2/\alpha)}{n}}\leq\alpha\,,
\]
which proves the corollary substituting Lemma~\ref{lem:Lemma2_KG_revisited} by the aforementioned inequality.
\subsection{Proof of Proposition~\ref{thm:KG_Sphere}}
\label{proof:KG_Sphere}
Define
\[
\forall t\in[-1,1],\quad\bp^R(t):=\sum_{\ell=0}^R\bp_\ell^\star c_\ell G_\ell^\beta(t)\,.
\]
We apply  \corref{Canonical_Kernels} to the kernel as follows.
\[
\forall x,y\in \mathds S^{\bd-1},\quad W_{\widetilde R}(x,y):=\sum_{\ell=0}^R\bp^\star_\ell c_\ell G_\ell^\beta(\langle x,y\rangle)=\bp^R(\langle x,y\rangle)\,,
\]
First, note that
\beq\label{biais}
\|W-W_{\widetilde R}\|_2=\|\bp-\bp^R\|_2=\Big[\sum_{\ell>R}d_\ell(\bp^\star_\ell)^2\Big]^{\frac12}\,.
\eeq
Next, invoke \cite[Corollary 1.2.7]{dai2013approximation} to get that
\beq
\notag
\forall\ell\geq0,\quad\sum_{j=1}^{d_\ell}Y_{\ell j}^2=d_\ell\,.
\eeq
It follows that the quantity $\rho(\widetilde R)$ of Theorem~\ref{thm:KG_revisited} simplifies to  $\rho(\widetilde R)\leq\widetilde R$.
Furthermore, it holds
\beq
\label{eq:GengenatOne}
\forall x\in \mathds S^{\bd-1},\quad W_{\widetilde R}(x,x)
=\sum_{\ell=0}^R\bp^\star_\ell c_\ell G_\ell^\beta(1)
=\sum_{\ell=0}^R d_\ell\bp^\star_\ell\,,
\eeq
since $G_\ell^\lambda(1)=d_\ell/c_\ell$. Then by Hoffman-Wielandt inequality, we have
\beq
\notag
\delta_2(\lambda(\bT_{{\widetilde R},n}),\lambda(\widetilde{\bT}_{{\widetilde R},n}))
\leq\|\widetilde{\bT}_{{\widetilde R},n}-{\bT}_{{\widetilde R},n}\|_F
=\Big[\frac1{n^2}\sum_{i=1}^nW_{\widetilde R}^2(X_i,X_i)\Big]^{\frac12}
=\frac1{\sqrt n}\Big|\sum_{\ell=0}^R d_\ell\bp^\star_\ell\Big|\,,
\eeq
almost surely. And we use this bound instead of the one of \lemref{Lemma1b_KG_revisited}.
The following result follows:
\beq\label{dromadaire}
\begin{split}
\delta_2(\lambda(\bbT_{ W_{\widetilde R}}),\lambda({\bT}_n))\leq
&
\Big[\sum_{\ell=0}^{R}d_\ell(\bp^\star_\ell)^2\Big]^{\frac12}\Big[\frac{\widetilde R\log(2\widetilde R/\alpha)}{n}\Big]^{\frac12}\\
&+\frac1{\sqrt n}\Big|\sum_{\ell=0}^R d_\ell\bp^\star_\ell\Big|
+\|\bp-\bp^R\|_{2}+\|\bp-\bp^{R}\|_\infty\Big[\frac{C_1\log(C_2/\alpha)}{n}\Big]^{\frac12}
\end{split}
\eeq
with probability at least $1-3\alpha$.

Let us study the various terms appearing in~\eqref{dromadaire}.
First, by orthonormality
$$\sum_{\ell=0}^{R} {d_\ell}|\bp_\ell^\star|^2 =\|\bp_R\|_2^2\leq \|\bp\|_2^2\leq 2$$
since $\bp_R$ is the orthogonal projection of $\bp$, and $|\bp|\leq 1$.
Next, using Cauchy-Schwarz inequality
$$\left|\sum_{\ell=0}^R d_\ell\bp^\star_\ell\right|\leq
\left(\sum_{\ell=0}^R d_\ell\right)^{1/2}\left(\sum_{\ell=0}^R d_\ell|\bp^\star_\ell|^2\right)^{1/2}
\leq\sqrt{2\widetilde R}.$$
Now $\|\bp-\bp_R\|_\infty\leq 1 +\|\bp_R\|_\infty$, with
$\|\bp_R\|_\infty\leq \sum_{\ell=0}^R |\bp^\star_\ell c_\ell|
\| G_\ell^{\beta}\|_\infty$.
But $\| G_\ell^{\beta}\|_\infty=G_\ell^{\beta}(1)$
by Formula~(4.7.1) and Theorems 7.32.1 and 7.33.1 of \cite{Szego} so 
$$\|\bp_R\|_\infty
\leq \sum_{\ell=0}^R |\bp^\star_\ell c_\ell| G_\ell^{\lambda}(1)
= \sum_{\ell=0}^R |\bp^\star_\ell |d_{\ell}\leq \sqrt{2\widetilde R}
\,.
$$
Finally,~\eqref{dromadaire} becomes
\beq \notag
\begin{split}
\delta_2(\lambda(\bbT_{ W_{\widetilde R}}),\lambda({\bT}_n))\leq &\sqrt{2}\Big[\frac{\widetilde R\log(2\widetilde R/\alpha)}{n}\Big]^{\frac12}+\frac{\sqrt{2\widetilde R}}{\sqrt n}
\\
&+\Big[\sum_{\ell>R}{d_\ell}(\bp_\ell^\star)^2 \Big]^{\frac12}
+\Big(1+\sqrt{2\widetilde R}\Big)\Big[\frac{C_1\log(C_2/\alpha)}{n}\Big]^{\frac12}
\end{split}
\eeq
Hence, since $\widetilde R\geq 1$ and $\log n\leq n$, there exists a numerical constant $C>0$ such that, with probability at least $1-3\alpha$
\beq
\label{TrTn}
\delta_2(\lambda(\bbT_{ W_{\widetilde R}}),\lambda({\bT}_n))\leq  
\Big[\sum_{\ell>R}{d_\ell}(\bp_\ell^\star)^2 \Big]^{\frac12}+
C\sqrt{\frac{\widetilde R(1+{\log(\widetilde R/\alpha)})}{n}}.
\eeq

 Adding~\eqref{biais} gives the first statement of Proposition~\ref{thm:KG_Sphere}.
 
Now let us denote by $\Omega$ the set with probability larger than $1-3\alpha$ such that the previous inequality 
is true. One has
\begin{align*}
\delta_2(\lambda({\bT}_n),\lambda^{\star})&= \delta_2(\lambda({\bT}_n),\lambda^{\star})\1_{\Omega}+\delta_2(\lambda({\bT}_n),\lambda^{\star})\1_{\Omega^c}.
\end{align*}
Observe that each $|\lambda_k(\bT _n)|$ is bounded by $\rho({\bT}_n)$ the spectral radius of ${\bT}_n$. Since $\bT_n:=(1/n)\bTheta_0$, it holds that $\rho({\bT}_n)\leq\|\bTheta_0/n\|_F\leq 1$. Then 
\beq\label{coarsebound}
\delta_2(\lambda({\bT}_n),\lambda^{\star})\leq \delta_2(\lambda({\bT}_n),0)+\delta_2(0,\lambda^{\star })\leq
\sqrt{n}+\|\bp\|_2
\eeq
which entails $\delta_2^2(\lambda({\bT}_n),\lambda^{\star})\leq 
(1+\sqrt{2})^2 n$. 
Hence, using this bound and previous inequality,
\begin{align*}
\E\delta_2^2(\lambda({\bT}_n),\lambda^{\star})&= \E\left(\delta_2^2(\lambda({\bT}_n),\lambda^{\star})\1_{\Omega}\right)+(1+\sqrt{2})^2 n\P({\Omega^c})\\
&\leq 
8\Big[\sum_{\ell>R}{d_\ell}(\bp_\ell^\star)^2 \Big]+
2C^2\frac{\widetilde R(1+{\log(\widetilde R/\alpha)})}{n}
+3\alpha (1+\sqrt{2})^2 n
\end{align*}
as soon as  $n^3\geq\widetilde R\log(2\widetilde R/\alpha)$.
We choose $\alpha={n^{-2}}$, and assume $n\geq2\widetilde R$. Then
$$\widetilde R\log(2\widetilde R/\alpha)
=\widetilde R\log(2\widetilde Rn^2)\leq \frac{n}2\log(n^3)\leq
n^3\,,$$
and
\begin{align*}
\E\left(\delta_2^2(\lambda({\bT}_n),\lambda^{\star})\right)
&\leq 8\Big[\sum_{\ell>R}{d_\ell}(\bp_\ell^\star)^2 \Big]+
2C^2\frac{\widetilde R(1+{\log(\widetilde R n^2)})}{n}
+3 (1+\sqrt{2})^2 n^{-1}\\
&\leq  8\Big[\sum_{\ell>R}{d_\ell}(\bp_\ell^\star)^2 \Big]+
C'\frac{ R^{\bd -1}\log n}{n}\\
\end{align*}
since $\widetilde R= \mathcal{O}(R^{\bd-1})$. Now we assume that $\bp$ belongs  to the Weighted Sobolev $ Z^s_{\bw_\beta}((-1,1))$. Then, using~\eqref{eq:l2decreasingSobolov}, 
for all $R$ such that $n\geq2\widetilde R$, it holds
$$\E\left(\delta_2^2(\lambda({\bT}_n),\lambda^{\star})\right)
\leq  8C(\bp,s,\bd) R^{-2s }+
C'\frac{ R^{\bd -1}\log n}{n}$$

To conclude it is sufficient to choose $R=\lfloor ({n}/{\log n })^{\frac{1}{2s+\bd-1}} \rfloor$.

\subsection{Proof of Theorem~\ref{thm:New_Main_Sphere}}
\label{proof:New_Main_Sphere}

We use the notation of the previous proofs and, in particular, the notation of Appendix~\ref{proof:KG_Sphere}. 
The heart of the proof lies in the following proposition, proved in Appendix~\ref{proof:ancienth4}. 

\pagebreak[3]

\begin{proposition}\label{prop:ancienth4}
Let $R\geq0$ such that $2\widetilde R\leq n$. It holds 
\[
\delta_2(\widehat\lambda^R,\lambda^{\star R})\leq4\delta_2(\lambda(\bbT_{W_{\widetilde R}}),\lambda({\bT}_n))
+\sqrt{2\widetilde R}\,\|\widehat{\bT}_{n}-{\bT}_{n}\|\,.
\]
\end{proposition}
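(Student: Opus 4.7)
The plan is a two-step oracle/triangle argument. First, the definition of $\widehat \lambda^R$ is re-read as a $\delta_2$-projection: writing $\tilde u$ for $u\in\mathcal M_R\subset\bbR^{\widetilde R}$ padded by $n-\widetilde R$ zeros, the identity
\[
\min_{\sigma\in\mathfrak S_n}\Big\{\sum_{k=1}^{\widetilde R}(u_k-\widehat\lambda_{\sigma(k)})^2+\sum_{k>\widetilde R}\widehat\lambda_{\sigma(k)}^2\Big\}=\delta_2(\tilde u,\widehat\lambda)^2
\]
identifies the minimization in \eqref{eq:Min_Admissible} with the minimization of $u\mapsto\delta_2(\tilde u,\widehat\lambda)$ over $u\in\mathcal M_R$. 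Since $\lambda^{\star R}\in\mathcal M_R$, this gives $\delta_2(\widetilde{\widehat\lambda^R},\widehat\lambda)\le \delta_2(\tilde\lambda^{\star R},\widehat\lambda)$, and the triangle inequality for $\delta_2$ (padding by zeros is harmless on both sides) yields $\delta_2(\widehat\lambda^R,\lambda^{\star R})\le 2\,\delta_2(\tilde\lambda^{\star R},\widehat\lambda)$.

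The second step controls $\delta_2(\tilde\lambda^{\star R},\widehat\lambda)$ by introducing the rank-$\widetilde R$ intermediate $\lambda(\bT_n^{(R)})$, where $\bT_n^{(R)}$ is the best rank-$\widetilde R$ approximation of $\bT_n$ (in any unitarily invariant norm). One triangle inequality gives
\[
\delta_2(\tilde\lambda^{\star R},\widehat\lambda)\le \delta_2(\tilde\lambda^{\star R},\lambda(\bT_n^{(R)}))+\delta_2(\lambda(\bT_n^{(R)}),\widehat\lambda).
\]
For the first piece, another triangle through $\lambda(\bT_n)$ produces $\delta_2(\lambda(\bbT_{W_{\widetilde R}}),\lambda(\bT_n))+\|\bT_n-\bT_n^{(R)}\|_F$, and one observes that $\|\bT_n-\bT_n^{(R)}\|_F^2=\sum_{k>\widetilde R}\mu_k^2\le \delta_2(\lambda(\bbT_{W_{\widetilde R}}),\lambda(\bT_n))^2$ by the rearrangement inequality (any pairing realizing the right-hand side must match at least $n-\widetilde R$ eigenvalues of $\bT_n$ to zeros, paying their squares). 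Hence $\delta_2(\tilde\lambda^{\star R},\lambda(\bT_n^{(R)}))\le 2\,\delta_2(\lambda(\bbT_{W_{\widetilde R}}),\lambda(\bT_n))$, which already explains the constant $4$ after multiplying by the factor $2$ from Step 1.

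For the second piece $\delta_2(\lambda(\bT_n^{(R)}),\widehat\lambda)$ I introduce $\widehat\bT_n^{(R)}$, the best rank-$\widetilde R$ approximation of $\widehat\bT_n$, and split by triangle. The key observation is that $\bT_n^{(R)}-\widehat\bT_n^{(R)}$ has rank at most $2\widetilde R$, so Hoffman--Wielandt combined with $\|A\|_F\le\sqrt{\mathrm{rank}(A)}\,\|A\|$ yield
\[
\delta_2(\lambda(\bT_n^{(R)}),\lambda(\widehat\bT_n^{(R)}))\le\|\bT_n^{(R)}-\widehat\bT_n^{(R)}\|_F\le\sqrt{2\widetilde R}\,\|\bT_n^{(R)}-\widehat\bT_n^{(R)}\|,
\]
and Weyl's inequality on singular values controls $\|\bT_n^{(R)}-\widehat\bT_n^{(R)}\|$ by $\|\widehat\bT_n-\bT_n\|$ modulo a truncation term $|\mu_{\widetilde R+1}|\le\delta_2(\lambda(\bbT_{W_{\widetilde R}}),\lambda(\bT_n))$ that is reabsorbed into the first summand. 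The remaining Frobenius ``tail'' $\|\widehat\bT_n-\widehat\bT_n^{(R)}\|_F$ is handled by the Eckart--Young minimality of $\widehat\bT_n^{(R)}$ against $\bT_n^{(R)}$, again produces only the two quantities $\delta_2(\lambda(\bbT_{W_{\widetilde R}}),\lambda(\bT_n))$ and $\|\widehat\bT_n-\bT_n\|$ (with the hypothesis $2\widetilde R\le n$ ensuring the rank-restricted regime is meaningful), and collecting the constants gives the announced $4\,\delta_2(\lambda(\bbT_{W_{\widetilde R}}),\lambda(\bT_n))+\sqrt{2\widetilde R}\,\|\widehat\bT_n-\bT_n\|$.

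The hard part will be Step 3: a naive Hoffman--Wielandt bound on $\delta_2(\lambda(\bT_n),\widehat\lambda)\le\|\widehat\bT_n-\bT_n\|_F$ would cost a $\sqrt n\,\|\widehat\bT_n-\bT_n\|$ factor, which is too weak (it would defeat the whole rate). The rank-$2\widetilde R$ argument applied to the difference of the two best rank-$\widetilde R$ approximations is exactly what replaces this $\sqrt n$ by $\sqrt{2\widetilde R}$; the bookkeeping that ensures the unavoidable truncation cross-terms get absorbed into the single prefactor $4$ on $\delta_2(\lambda(\bbT_{W_{\widetilde R}}),\lambda(\bT_n))$ (rather than contaminating the operator-norm term) is where care is needed.
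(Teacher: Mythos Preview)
Your first step is already too lossy to reach the stated bound. The inequality $\delta_2(\widehat\lambda^R,\lambda^{\star R})\le 2\,\delta_2(\tilde\lambda^{\star R},\widehat\lambda)$ is correct, but the intermediate quantity $\delta_2(\tilde\lambda^{\star R},\widehat\lambda)$ is \emph{not} controlled by $2\,\delta_2(\lambda(\bbT_{W_{\widetilde R}}),\lambda(\bT_n))+\tfrac12\sqrt{2\widetilde R}\,\|\widehat\bT_n-\bT_n\|$. Since $\tilde\lambda^{\star R}$ has at most $\widetilde R$ nonzero entries, one always has
\[
\delta_2(\tilde\lambda^{\star R},\widehat\lambda)\ \ge\ \min_{\nu:\,\#\{\nu_k\neq 0\}\le\widetilde R}\delta_2(\nu,\widehat\lambda)\ =\ \|\widehat\bT_n-\widehat\bT_n^{(R)}\|_F,
\]
and this Frobenius tail sees all $n-\widetilde R$ small noisy eigenvalues of $\widehat\bT_n$. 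Concretely, take $\bT_n$ of rank $\le\widetilde R$ sharing the spectrum of $\bbT_{W_{\widetilde R}}$ (so the first summand vanishes) and $\widehat\bT_n=\bT_n+E$ with $\|E\|=b$ but $\|E\|_F\asymp\sqrt n\,b$: then $\|\widehat\bT_n-\widehat\bT_n^{(R)}\|_F\asymp\sqrt{n-\widetilde R}\,b$, not $\sqrt{\widetilde R}\,b$. Your later steps cannot repair this: the Eckart--Young move $\|\widehat\bT_n-\widehat\bT_n^{(R)}\|_F\le\|\widehat\bT_n-\bT_n^{(R)}\|_F$ still leaves a term containing the full $\|\widehat\bT_n-\bT_n\|_F$; and the ``reabsorption'' of the truncation term in Step~3 multiplies $|\mu_{\widetilde R+1}|\le\delta_2(\lambda(\bbT_{W_{\widetilde R}}),\lambda(\bT_n))$ by $\sqrt{2\widetilde R}$, which contaminates the first summand with a factor that the target bound does not allow.

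The paper avoids routing through the full spectrum $\widehat\lambda$. It introduces the \emph{oracle} $\overline\lambda^R\in\mathcal M_R$, defined exactly as $\widehat\lambda^R$ but with $\lambda(\bT_n)$ in place of $\widehat\lambda$, and records its optimal permutation $\overline\sigma$. The comparison is then restricted to the index set $S=\widehat\sigma([\widetilde R])\cup\overline\sigma([\widetilde R])$ of size $s\le 2\widetilde R$. On this restricted set Weyl gives $\big(\sum_{k\in S}(\lambda_k(\bT_n)-\widehat\lambda_k)^2\big)^{1/2}\le\sqrt s\,\|\widehat\bT_n-\bT_n\|$, which is the origin of the $\sqrt{2\widetilde R}$ factor. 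A triangle $\widehat\lambda^R\to(\widehat\lambda_k)_{k\in S}\to(\lambda_k(\bT_n))_{k\in S}\to\overline\lambda^R$, together with the optimality of $\widehat\lambda^R$ and $\overline\lambda^R$ in their respective restricted problems, yields $\delta_2(\widehat\lambda^R,\overline\lambda^R)\le 2\,\delta_2(\lambda(\bbT_{W_{\widetilde R}}),\lambda(\bT_n))+\sqrt{2\widetilde R}\,\|\widehat\bT_n-\bT_n\|$; a second, purely oracle, triangle gives $\delta_2(\overline\lambda^R,\lambda^{\star R})\le 2\,\delta_2(\lambda(\bbT_{W_{\widetilde R}}),\lambda(\bT_n))$. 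The point is that the noisy spectrum $\widehat\lambda$ is only ever touched on at most $2\widetilde R$ coordinates.
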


Now, using inequality~\eqref{eq:BvH}, we know that 
\[
\|\widehat{\bT}_{n}-{\bT}_{n}\|\leq \frac3{\sqrt{2n}}+C_0\frac{\sqrt{\log(n/\alpha)}}n\,,
\]
 with probability at least $1-\alpha$.
\begin{remark}
In the relatively sparse model~\eqref{eq:relativelysparse}, using~\eqref{eq:BvH2}, recall that
\[
\|\widehat{\bT}_{n}-{\bT}_{n}\|\leq3\sqrt{\frac{2\zeta_n}n}+C_0\frac{\sqrt{\log(n/\alpha)}}n
\]
with probability at least $1-\alpha$. It shows that $\|\widehat{\bT}_{n}-{\bT}_{n}\|=\mathcal{O}_{\mathds P}(\sqrt{\zeta_n/n})$ under~\eqref{eq:relativelysparse}.
\end{remark}

Moreover, by~\eqref{TrTn} in proof of Proposition~\ref{thm:KG_Sphere}, for all $n^3\geq\widetilde R\log(2\widetilde R/\alpha)$,
$$\delta_2(\lambda(\bbT_{ W_{\widetilde R}}),\lambda({\bT}_n))\leq  
\Big[\sum_{\ell>R}{d_\ell}(\bp_\ell^\star)^2 \Big]^{\frac12}+
C\sqrt{\frac{\widetilde R(1+{\log(\widetilde R/\alpha)})}n}.
$$

\begin{remark}
In the relatively sparse model~\eqref{eq:relativelysparse}, it reads
\[
\delta_2(\lambda(\bbT_{ W_{\widetilde R}}),\lambda({\bT}_n))\leq  
\zeta_n\Big[\sum_{\ell>R}{d_\ell}(\bp_\ell^\star)^2 \Big]^{\frac12}+
C\zeta_n\sqrt{\frac{\widetilde R(1+{\log(\widetilde R/\alpha)})}n}.\]
with probability at least $1-\alpha$. We recall that $\bp_\ell^\star$ are the eigenvalues of $\bbT_W$.
\end{remark}

Thus there exists a numerical constant $\kappa_0>0$ such that, with probability at least $1-3\alpha$ 

$$\delta_2(\widehat\lambda^R,\lambda^{\star R})\leq4\Big[\sum_{\ell>R}{d_\ell}(\bp_\ell^\star)^2 \Big]^{\frac12}+
\kappa_0\sqrt{\frac{\widetilde R(1+{\log(\widetilde R/\alpha)})}n}.
$$
if  $n^3\geq (2\widetilde R)^3\vee\widetilde R\log(2\widetilde R/\alpha)$,
that gives the first statement of Theorem~\ref{thm:New_Main_Sphere}. The remark (Remark~\ref{rem:Thm6}) following Theorem~\ref{thm:New_Main_Sphere} can be deduced from the previous remarks of this proof.

Now let us denote by $\Omega$ the set with probability larger than $1-3\alpha$ such that the previous inequality is true. One has
\begin{align*}
\delta_2(\widehat\lambda^R,\lambda^{\star R})&= \delta_2(\widehat\lambda^R,\lambda^{\star R})\1_{\Omega}+\delta_2(\widehat\lambda^R,\lambda^{\star R})\1_{\Omega^c}
\end{align*}
As for~\eqref{coarsebound}, we can prove the coarse bound 
$$\delta_2(\widehat\lambda^R,\lambda^{\star R})\leq 
\sqrt{\widetilde R} +\|\bp\|_2\leq (1 +\sqrt2)\sqrt{\widetilde R}.$$ 
Hence, using this bound and previous inequality,
\begin{align*}
\E\left(\delta_2^2(\widehat\lambda^R,\lambda^{\star R})\right)&= \E\left(\delta_2^2(\widehat\lambda^R,\lambda^{\star R})\1_{\Omega}\right)+(1 +\sqrt2)^2{\widetilde R}\P({\Omega^c})\\
&\leq 32
\left[\sum_{\ell>R}{d_\ell}(\bp_\ell^\star)^2 \right]+
2\kappa_0^2\frac{\widetilde R\left(1+\log(\widetilde R/\alpha)\right)}{n}+3\alpha (1 +\sqrt2)^2{\widetilde R} \,,
\end{align*}
as soon as  $n^3\geq(2\widetilde R)^3\vee\widetilde R\log(2\widetilde R/\alpha)$.
We choose $\alpha={n^{-1}}$, and assume $n\geq2\widetilde  R$. Then
$\widetilde R\log(2\widetilde R/\alpha)\leq n^3$
and
\begin{align*}
\E\left(\delta_2^2(\widehat\lambda^R,\lambda^{\star R})\right)
&\leq 32
\left[\sum_{\ell>R}{d_\ell}(\bp_\ell^\star)^2 \right]+
2\kappa_0^2\frac{\widetilde R\left(1+\log(\widetilde Rn)\right)}{n}+3(1 +\sqrt2)^2\frac{\widetilde R}{n} \\
&\leq 32
\left[\sum_{\ell>R}{d_\ell}(\bp_\ell^\star)^2 \right]+
(6\kappa_0^2+18)\frac{\widetilde R\log n}{n}.
\end{align*}
This completes the proof. The same reasoning gives the second statement of Remark~\ref{rem:Thm6}.


\subsection{Proof of Proposition~\ref{prop:ancienth4}}
\label{proof:ancienth4}

$\circ\quad$Define $\Delta_R$ as follows
 \[
 \forall x,y\in\bbR^{2\widetilde R},\quad\Delta^2_R(x,y):=\min_{\sigma\in\mathfrak S_{2\widetilde R}}
 \Big\{
 \sum_{k=1}^{2\widetilde R}(x_k-y_{\sigma(k)})^2\,
 \Big\},
 \]
 where $\mathfrak S_{2\widetilde R}$ denotes the set of permutations on $[2\widetilde R]$. 
 
 \pagebreak[3]
 
 Once again, using Hardy-Littlewood rearrangement inequality \cite[Theorem 368]{hardy1952inequalities}, it holds that
  \[
 \forall x,y\in\bbR^{2\widetilde R}\ \mathrm{s.t.}\ x_1\geq\ldots\geq x_{2\widetilde R}\ \mathrm{and}\ y_1\geq\ldots\geq y_{2\widetilde R},\quad\Delta^2_R(x,y):=
  \sum_{k=1}^{2\widetilde R}(x_k-y_k)^2\,. 
  \]
 Completing with $\widetilde R$ zeros, we denote also
 \begin{align*}
  \widehat\Lambda^R&:=(\underbrace{\widehat\bp_0}_{d_0},\underbrace{\widehat\bp_1,\ldots, \widehat\bp_1}_{d_1}, \ldots,\underbrace{\widehat\bp_R,\ldots, \widehat\bp_R}_{d_R},\underbrace{0,\ldots,0}_{\widetilde R})\in\bbR^{2\widetilde R}\,,\\
  \mathrm{and}\quad
 \Lambda^{\star R}&:=(\bp^\star_0,\bp^\star_1,\ldots, \bp^\star_1, \ldots,\bp^\star_R,\ldots, \bp^\star_R,0,\ldots,0)\in\bbR^{2\widetilde R}\,.
 \end{align*}
 Since $R$ does not vary in this proof, we have denoted $\widehat\bp_\ell:=\widehat\bp_\ell^R$.
Observe that $\delta_2(\widehat\lambda^R,\lambda^{\star R})=\Delta_R(\widehat\Lambda^R,\Lambda^{\star R})$ using the property described in~\eqref{eq:Hardy_Littlewood_L2} and Hardy-Littlewood rearrangement inequality \cite[Theorem 368]{hardy1952inequalities} again.

$\circ\quad$Recall that it holds $\lambda(\bbT_{W_{\widetilde R}})=\{0,\bp_0^\star,\bp_1^\star,\ldots, \bp_1^\star, \ldots,\bp_R^\star,\ldots, \bp_R^\star\}$ where zero is the only eigenvalue with infinite multiplicity. In particular, remark that the vector $(\bp_0^\star,\bp_1^\star,\ldots, \bp_1^\star, \ldots,\bp_R^\star,\ldots, \bp_R^\star)$ belongs to $\mathcal M_R$
. We begin by defining
\beq
\label{eq:Def_P_overline}
(\overline\bp_0,\ldots,\overline\bp_R,\ldots, \overline\bp_R)\in\arg\min_{u\in\mathcal M_R}\min_{\sigma\in\mathfrak S_n}
\Big\{
\sum_{k=1}^{\widetilde R}(u_k-\lambda_{\sigma(k)}({\bT}_{n}))^2+\sum_{k=\widetilde R+1}^n\lambda_{\sigma(k)}({\bT}_{n})^2
\Big\}\,,
\eeq
where $\mathfrak S_n$ denotes the set of permutation on $[n]$. Also, define
\[
\forall x,y\in \mathds S^{\bd-1},\quad \overline{W}_{\widetilde R}(x,y)=\sum_{\ell=0}^R\overline\bp_\ell c_\ell G_\ell^\beta(\langle x,y\rangle)\,,
\]
and observe that $\lambda(\bbT_{\overline W_{\widetilde R}})=\{0,\overline \bp_0,\overline \bp_1,\ldots, \overline \bp_1, \ldots,\overline \bp_R,\ldots, \overline \bp_R\}$ where zero is the only eigenvalue with infinite multiplicity. Denote $\overline \sigma\in\mathfrak S_n$ the permutation that achieves the minimum in~\eqref{eq:Def_P_overline}. We have the following intermediate result.
\begin{lemma}
\label{lem:distance_barW_Tn}
It holds
\beq
\notag
\delta_2^2(\lambda(\bbT_{\overline W_{\widetilde R}}),\lambda({\bT}_n))=
\sum_{k=1}^{\widetilde R}(\overline \bp_k-\lambda_{\overline\sigma(k)}({\bT}_{n}))^2+\sum_{k=\widetilde R+1}^n\lambda_{\overline\sigma(k)}({\bT}_{n})^2\leq\delta_2^2(\lambda(\bbT_{ W_{\widetilde R}}),\lambda({\bT}_n))\,,
\eeq
where $(\overline \bp_\ell)_\ell$ is defined by~\eqref{eq:Def_P_overline}.
\end{lemma}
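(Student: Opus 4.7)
The plan is to reduce both $\delta_2^2$ computations to the same finite combinatorial form as the optimization problem in \eqref{eq:Def_P_overline}, which will make both the equality and the inequality transparent.

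First, I would establish a key identity: for any vector $u=(\underbrace{u_0}_{d_0},\ldots,\underbrace{u_R,\ldots,u_R}_{d_R})\in\mathcal M_R$, the associated kernel $W_u(x,y)=\sum_{\ell=0}^R u_\ell c_\ell G_\ell^\beta(\langle x,y\rangle)$ defines an operator $\bbT_{W_u}$ whose spectrum consists precisely of the $u_\ell$'s (with multiplicities $d_\ell$) together with zero of infinite multiplicity, in view of the decomposition~\eqref{eq:Decomposition_Kernel_Sphere}. Since $\bT_n$ has at most $n$ nonzero eigenvalues, the Hardy–Littlewood rearrangement formula~\eqref{eq:Hardy_Littlewood_L2} reduces the rearrangement distance between these two padded sequences to a finite minimization:
\[
\delta_2^2(\lambda(\bbT_{W_u}),\lambda(\bT_n))=\min_{\sigma\in\mathfrak S_n}\Big\{\sum_{k=1}^{\widetilde R}(u_k-\lambda_{\sigma(k)}(\bT_n))^2+\sum_{k=\widetilde R+1}^n\lambda_{\sigma(k)}(\bT_n)^2\Big\}.
\]
Intuitively, an optimal pairing assigns $\widetilde R$ of the eigenvalues of $\bT_n$ to the nonzero entries of the spectrum of $\bbT_{W_u}$ (indices $1,\ldots,\widetilde R$) and sends the remaining $n-\widetilde R$ eigenvalues of $\bT_n$ to zero entries, contributing their squares to the loss.

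Second, the equality in the lemma follows immediately by specializing the above identity to $u=\overline{\bp}$: the outer minimum is achieved at $\sigma=\overline\sigma$ by the very definition of $(\overline\bp,\overline\sigma)$ in~\eqref{eq:Def_P_overline}. Third, the inequality follows by comparing two minimizations. Applying the identity with $u=(\bp_0^\star,\ldots,\bp_R^\star)$\textemdash which belongs to $\mathcal M_R$ since these are the true coefficients of the finite-rank kernel $W_{\widetilde R}$\textemdash gives
\[
\delta_2^2(\lambda(\bbT_{W_{\widetilde R}}),\lambda(\bT_n))=\min_{\sigma\in\mathfrak S_n}\Big\{\sum_{k=1}^{\widetilde R}(\bp_k^\star-\lambda_{\sigma(k)}(\bT_n))^2+\sum_{k=\widetilde R+1}^n\lambda_{\sigma(k)}(\bT_n)^2\Big\}.
\]
Since $\overline{\bp}$ minimizes the same objective jointly over $\mathcal M_R\times\mathfrak S_n$ and $\bp^\star$ is just one element of $\mathcal M_R$, the bound $\delta_2^2(\lambda(\bbT_{\overline W_{\widetilde R}}),\lambda(\bT_n))\leq \delta_2^2(\lambda(\bbT_{W_{\widetilde R}}),\lambda(\bT_n))$ is immediate.

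The only subtle step is the reduction to a finite permutation in the first part, which requires a careful use of the signed rearrangement formula~\eqref{eq:Hardy_Littlewood_L2}. Concretely, one sorts the (few) nonzero entries of $\lambda(\bbT_{W_u})$ and of $\lambda(\bT_n)$ by sign-decreasing order; padding by zeros, a standard exchange argument shows that no improvement is obtained by pairing a zero of $\lambda(\bbT_{W_u})$ with a nonzero $u_j$ at the expense of leaving an eigenvalue of $\bT_n$ unmatched. Once this rewriting is secured, the remainder of the proof is purely an application of the definition of $\overline{\bp}$.
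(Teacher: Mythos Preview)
Your proposal is correct and follows essentially the same approach as the paper: both arguments use the Hardy--Littlewood rearrangement identity~\eqref{eq:Hardy_Littlewood_L2} to reduce $\delta_2^2(\lambda(\bbT_{W_u}),\lambda(\bT_n))$ to the finite minimization over $\mathfrak S_n$, then specialize to $u=\overline{\bp}$ for the equality and to $u=\bp^\star\in\mathcal M_R$ for the inequality. Your presentation is slightly cleaner in that you state the identity once for general $u\in\mathcal M_R$ before specializing, whereas the paper treats the two cases separately, but the content is the same.
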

\begin{proof}
Observe that $\lambda(\bbT_{\overline W_{\widetilde R}})$ has at most $\widetilde R$ nonzero eigenvalues. Using again Hardy-Littlewood rearrangement inequality \cite[Theorem 368]{hardy1952inequalities} and~\eqref{eq:Hardy_Littlewood_L2}, one may deduce that $\delta_2^2(\lambda(\bbT_{\overline W_{\widetilde R}}),\lambda({\bT}_n))$ reads $\sum_{k=1}^{\widetilde R}(\overline \bp_k-\lambda_{\sigma(k)}({\bT}_{n}))^2+\sum_{k=\widetilde R+1}^n\lambda_{\sigma(k)}({\bT}_{n})^2$ for some permutation $\sigma\in\mathfrak S_n$. Taking the infimum leads to the left hand side equality. 

Then, observe that $\lambda(\bbT_{ W_{\widetilde R}})$ has at most $\widetilde R$ nonzero eigenvalues. Using again Hardy-Littlewood rearrangement inequality \cite[Theorem 368]{hardy1952inequalities} and~\eqref{eq:Hardy_Littlewood_L2}, one may deduce again that $\delta_2^2(\lambda(\bbT_{\overline W_{\widetilde R}}),\lambda({\bT}_n))$ reads $\sum_{k=1}^{\widetilde R}(\bp_k^\star-\lambda_{\sigma(k)}({\bT}_{n}))^2+\sum_{k=\widetilde R+1}^n\lambda_{\sigma(k)}({\bT}_{n})^2$ for some permutation $\sigma\in\mathfrak S_n$. Furthermore, recall that $(\bp_0^\star,\bp_1^\star,\ldots, \bp_1^\star, \ldots,\bp_R^\star,\ldots, \bp_R^\star)$ belongs to $\mathcal M_R$ and, hence, it is admissible to Program~\eqref{eq:Def_P_overline}. In particular, the value of the objective at this point is always greater than the minimal value. This gives the right hand side inequality.
\end{proof}

$\circ\quad$
Similarly, denote $((\widehat\bp_\ell),\widehat \sigma)$ a point that achieves the minimum in~\eqref{eq:Min_Admissible}. Now, consider \beq\notag
S:=\overline\sigma([\widetilde R])\cup\widehat\sigma([\widetilde R])\,,\eeq and $S^c:=[n]\setminus S$, and define $s:=\#S\leq2\widetilde R\leq n$. 

\pagebreak[3]

On can check that
\begin{align*}
\widehat\bp_\ell=\frac1{d_\ell}\sum_{k=\widetilde{\ell-1}}^{\widetilde \ell}\lambda_{\widehat\sigma(k)}\quad
\mathrm{and\ }\quad\overline\bp_\ell=\frac1{d_\ell}\sum_{k=\widetilde{\ell-1}}^{\widetilde \ell}\lambda_{\overline\sigma(k)}
\end{align*}
with the convention $\widetilde{-1}=1$.

$\circ\quad$Denote by $\mathfrak S_{S,n}$ the set of permutation $\sigma\in\mathfrak S_{n}$ such that $\sigma([s])=S$, $\mathfrak S_{S}$ the set of bijections from~$[s]$ onto $S$ and $\mathfrak S_{s}$ the set of permutations of $[s]$. It is clear that $\mathfrak S_{S}\simeq\mathfrak S_{s} $. Observe that
\begin{align}
\notag
(\widehat\bp_\ell)&=
\arg\min_{u\in\mathcal M_R}\min_{\sigma\in\mathfrak S_n}\Big\{
\sum_{k=1}^{\widetilde R}(u_k-\lambda_{\sigma(k)})^2+\sum_{k=\widetilde R+1}^n\lambda_{\sigma(k)}^2
\Big\}\\\notag
&=\arg\min_{u\in\mathcal M_R}\min_{\sigma\in\mathfrak S_{S,n}}\Big\{
\sum_{k=1}^{\widetilde R}(u_k-\lambda_{\sigma(k)})^2+\sum_{k=\widetilde R+1}^n\lambda_{\sigma(k)}^2
\Big\}
\end{align}
since one of the permutation $\sigma\in\mathfrak S_n$ that achieves the minimum in the first row satisfies $\sigma\in\mathfrak S_{S,n}$  and it follows that $(\widehat\bp_\ell)$ is the arg minimum of the second program. Now, separating the terms~$\lambda_{\sigma(k)}^2$ for $k>\widetilde R$, we obtain
\begin{align}
(\widehat\bp_\ell)&
\notag
=\arg\min_{u\in\mathcal M_R}\min_{\sigma\in\mathfrak S_{S,n}}\Big\{
\sum_{k=1}^{\widetilde R}(u_k-\lambda_{\sigma(k)})^2+\sum_{k=\widetilde R+1}^s\lambda_{\sigma(k)}^2+\sum_{t\in S^c}\lambda_{t}^2
\Big\}\\
&
\notag
=\arg\min_{u\in\mathcal M_R}\min_{\sigma\in\mathfrak S_{S}}\Big\{
\sum_{k=1}^{\widetilde R}(u_k-\lambda_{\sigma(k)})^2+\sum_{k=\widetilde R+1}^s\lambda_{\sigma(k)}^2+\sum_{t\in S^c}\lambda_{t}^2
\Big\}\\
&
\label{eq:DeltaR_hatP}
=\arg\min_{u\in\mathcal M_R}\min_{\sigma\in\mathfrak S_{S}}\Big\{
\sum_{k=1}^{\widetilde R}(u_k-\lambda_{\sigma(k)})^2+\sum_{k=\widetilde R+1}^s\lambda_{\sigma(k)}^2
\Big\}\,.
\end{align}
Similarly, one can check that
\[
(\overline\bp_\ell)=\arg\min_{u\in\mathcal M_R}\min_{\sigma\in\mathfrak S_{S}}\Big\{
\sum_{k=1}^{\widetilde R}(u_k-\lambda_{\sigma(k)}({\bT}_{n}))^2+\sum_{k=\widetilde R+1}^s\lambda_{\sigma(k)}({\bT}_{n})^2
\Big\}\,.
\]

$\circ\quad$Consider the restriction $\dot\Delta_R$ of $\Delta_R$ to $\bbR^s$ defined as follows
 \[
 \forall x,y\in\bbR^{s},\quad\dot\Delta^2_R(x,y):=\min_{\sigma\in\mathfrak S_{s}}
 \Big\{
 \sum_{k=1}^{s}(x_k-y_{\sigma(k)})^2
 \Big\}\,.
 \]
Using~\eqref{eq:Weyl} and Weyl's inequality \cite[page 63]{bhatia2013matrix} and by abuse of notation, note that
\begin{align*}
\dot\Delta_R((\lambda_k({\bT}_{n}))_{k\in S},(\lambda_k)_{k\in S})
&\leq\Big[\sum_{k\in S}(\lambda_{k}-\lambda_{k}({\bT}_{n}))^2\Big]^{\frac12}\leq \sqrt s\|\widehat{\bT}_{n}-{\bT}_{n}\|\,.
\end{align*}

\noindent
Moreover, using~\eqref{eq:DeltaR_hatP} and by abuse of notation, remark that
\begin{align*}
\dot\Delta^2_R((\widehat\bp_\ell),(\lambda_k)_{k\in S})
&=\min_{\sigma\in\mathfrak S_{S}}\Big\{\min_{u\in\mathcal M_R}
\sum_{k=1}^{\widetilde R}(u_k-\lambda_{\sigma(k)})^2+\sum_{k=\widetilde R+1}^s\lambda_{\sigma(k)}^2
\Big\}\,.
\\
&
\leq
\min_{\sigma\in\mathfrak S_{S}}\Big\{\sum_{k=1}^{\widetilde R}(\overline\bp_k-\lambda_{\sigma(k)})^2+\sum_{k=\widetilde R+1}^s\lambda_{\sigma(k)}^2
\Big\}\,.
\\
&=\dot\Delta^2_R((\overline\bp_\ell),(\lambda_k)_{k\in S})
\end{align*}
where
$(\widehat\bp_\ell)=(\widehat\bp_0,\widehat\bp_1,\ldots, \widehat\bp_1, \ldots,\widehat\bp_R,\ldots, \widehat\bp_R,0,\ldots,0)\in\bbR^s $ completing with $s-\widetilde R$ zeros.

\pagebreak[3]

$\circ\quad$Using~\eqref{eq:Def_P_overline}, Lemma~\ref{lem:distance_barW_Tn} and by abuse of notation, observe that
\begin{align*}
\dot\Delta^2_R((\overline\bp_\ell),(\lambda_k({\bT}_{n}))_{k\in S})
&=\min_{\sigma\in\mathfrak S_{S}}\Big\{
\sum_{k=1}^{\widetilde R}(\overline\bp_k-\lambda_{\sigma(k)}({\bT}_{n}))^2+\sum_{k=\widetilde R+1}^s\lambda_{\sigma(k)}({\bT}_{n})^2
\Big\}\,.
\\
&\leq\min_{\sigma\in\mathfrak S_{S}}\Big\{
\sum_{k=1}^{\widetilde R}(\overline\bp_k-\lambda_{\sigma(k)}({\bT}_{n}))^2+\sum_{k=\widetilde R+1}^s\lambda_{\sigma(k)}({\bT}_{n})^2
+\sum_{t\in S^c}\lambda_{t}({\bT}_n)^2
\Big\}\,.
\\
&=\min_{\sigma\in\mathfrak S_n}
\Big\{
\sum_{k=1}^{\widetilde R}(\overline\bp_k-\lambda_{\sigma(k)}({\bT}_{n}))^2+\sum_{k=\widetilde R+1}^n\lambda_{\sigma(k)}({\bT}_{n})^2
\Big\}\,,
\\
&=
\sum_{k=1}^{\widetilde R}(\overline \bp_k-\lambda_{\overline\sigma(k)}({\bT}_{n}))^2+\sum_{k=\widetilde R+1}^n\lambda_{\overline\sigma(k)}({\bT}_{n})^2\\
&=\delta^2_2(\lambda(\bbT_{\overline W_{\widetilde R}}),\lambda({\bT}_n))\\
&
\leq\delta^2_2(\lambda(\bbT_{ W_{\widetilde R}}),\lambda({\bT}_n))
\end{align*}
where we denote by $(\overline\bp_\ell)=(\overline\bp_0,\overline\bp_1,\ldots, \overline\bp_1, \ldots,\overline\bp_R,\ldots, \overline\bp_R,0,\ldots,0)\in\bbR^s$ completing with $s-\widetilde R$ zeros.

$\circ\quad$Using that $\dot\Delta_R$ is a semi-distance\textemdash in particular the triangular inequality holds, one deduces
\begin{align*}
\dot\Delta_R((\widehat\bp_\ell),(\overline\bp_\ell))
&\leq\dot\Delta_R((\widehat\bp_\ell),(\lambda_k)_{k\in S})+\dot\Delta_R((\lambda_k)_{k\in S},(\lambda_k({\bT}_{n}))_{k\in S})+\dot\Delta_R((\lambda_k({\bT}_{n}))_{k\in S},(\overline\bp_\ell))\\
&\leq2\delta_2(\lambda(\bbT_{ W_{\widetilde R}}),\lambda({\bT}_n))+\sqrt s\|\widehat{\bT}_{n}-{\bT}_{n}\|\,,
\end{align*}
combining the aforementioned inequalities.

Define $\overline\Lambda^R:=(\overline\bp_0,\overline\bp_1,\ldots, \overline\bp_1, \ldots,\overline\bp_R,\ldots, \overline\bp_R,0,\ldots,0)\in\bbR^{2\widetilde R}$ completing with $\widetilde R$ zeros, and remark that
\[
\Delta_R(\widehat\Lambda^R,\overline\Lambda^R)\leq\dot\Delta_R((\widehat\bp_\ell),(\overline\bp_\ell))
\leq2\delta_2(\lambda(\bbT_{ W_{\widetilde R}}),\lambda({\bT}_n))+\sqrt{2\widetilde R}\|\widehat{\bT}_{n}-{\bT}_{n}\|\,.
\]

$\circ\quad$It remains to bound $\Delta_R(\Lambda^{\star R},\overline\Lambda^R)$. Note that $\Delta_R(\Lambda^{\star R},\overline\Lambda^R)=\delta_2(\lambda(\bbT_{W_{\widetilde R}}),\lambda(\bbT_{\overline W_{\widetilde R}}))$.
Then, invoke Lemma~\ref{lem:distance_barW_Tn} to get that
\[
\Delta_R(\Lambda^{\star R},\overline\Lambda^R)
\leq\delta_2(\lambda(\bbT_{W_{\widetilde R}}),\lambda({\bT}_n))+\delta_2(\lambda({\bT}_n),\lambda(\bbT_{\overline W_{\widetilde R}}))\leq2\delta_2(\lambda(\bbT_{W_{\widetilde R}}),\lambda({\bT}_n))\,.
\]

Finally we obtain the following bound: 
\beq\label{ancienth4}
\delta_2(\widehat\lambda^R,\lambda^{\star R})\leq4\delta_2(\lambda(\bbT_{W_{\widetilde R}}),\lambda({\bT}_n))+\sqrt{2\widetilde R}\|\widehat{\bT}_{n}-{\bT}_{n}\|\eeq
 for all sample size $n\geq 2\widetilde R$.

\subsection{Proof of Theorem~\ref{thm:adap}}
\label{proof:adap}
In this proof we denote $D(R)=\sqrt{{\widetilde R\log n}/n}$, so that 
$B(R)=\max_{R'\in\R} \left\{\delta_2(\widehat \lambda^{R'},\widehat \lambda^{R'\wedge R})-\kappa D(R')\right\}.$
Fix some $R\in \R$. First decompose
$$\delta_2(\widehat \lambda^{\widehat R}, \lambda^{\star})\leq
\delta_2(\widehat \lambda^{\widehat R},\widehat \lambda^{\widehat R\wedge R})+
\delta_2(\widehat \lambda^{\widehat R\wedge R},\widehat \lambda^{ R})+
\delta_2(\widehat \lambda^{R}, \lambda^{\star}).$$
Using the definition of $B(R)$ and $B(\widehat R)$ it holds that
$$\delta_2(\widehat \lambda^{\widehat R}, \lambda^{\star})\leq
B( R)+\kappa D(\widehat R)+
B(\widehat R)+\kappa D( R)+
\delta_2(\widehat \lambda^{R}, \lambda^{\star}).$$
We now use the definition of $\widehat R$ to write $\delta_2(\widehat \lambda^{\widehat R}, \lambda^{\star})\leq
2B( R)+2\kappa D(R)+\delta_2(\widehat \lambda^{R}, \lambda^{\star})$.
The last term can be split in $\delta_2(\widehat \lambda^{R}, \lambda^{\star})\leq \delta_2(\widehat \lambda^{R}, \lambda^{\star R})+\delta_2(\lambda^{\star R}, \lambda^{\star })$.
Thus, 
\beq\label{bli}
\delta_2(\widehat \lambda^{\widehat R}, \lambda^{\star })\leq
2B( R)+2\kappa D(R)+\delta_2(\widehat \lambda^{R}, \lambda^{\star R})+\delta_2(\lambda^{\star R}, \lambda^{\star }).
\eeq
We shall now control the term $B(R)$.
Denote $a_+=\max(a,0)$ the positive part of any real $a$. 
Let us write
\begin{align*}
B(R)&=\max_{R'\in\R} \{\delta_2(\widehat \lambda^{R'},\widehat \lambda^{R'\wedge R})-\kappa D(R')\}\\
&\leq  \max_{R'\in \R, R'\geq R} \{\delta_2(\widehat \lambda^{R'},\widehat \lambda^{ R})-\kappa D(R')\}_+\\
&\leq \max_{R'\in \R, R'\geq R} \{\delta_2(\widehat \lambda^{R'}, \lambda^{\star R'})
+\delta_2( \lambda^{\star R'}, \lambda^{\star  R})
+\delta_2(\lambda^{\star R},\widehat \lambda^{R})
-\kappa D(R')\}_+
\end{align*}
Now $\displaystyle \delta_2^2( \lambda^{\star R'}, \lambda^{\star   R})=\sum_{k= \widetilde R+1}^{\widetilde R'}|\lambda_k^{\star }|^2\leq \delta_2^2(\lambda^{\star R}, \lambda^{\star })$.
 Then
\[
B(R)\leq 
\max_{R'\in \R, R'\geq R} \{\delta_2(\widehat \lambda^{R'}, \lambda^{\star R'})-\kappa D(R')\}_+
+\delta_2( \lambda^{\star R}, \lambda^{\star })
+\delta_2(\lambda^{\star R},\widehat \lambda^{R})\,.
\]
Finally, combining this with~\eqref{bli},
 \begin{align*}
\delta_2(\widehat \lambda^{\widehat R}, \lambda^{\star })&\leq 
 2\max_{R'\in \R, R'\geq R} \{\delta_2(\widehat \lambda^{R'}, \lambda^{\star R'})-\kappa D(R')\}_+
+2\kappa D(R)+3\delta_2(\widehat \lambda^{R}, \lambda^{\star R})+3\delta_2(\lambda^{\star R}, \lambda^{\star })\,,\\
&\leq  5\max_{R'\in \R, R'\geq R} \{\delta_2(\widehat \lambda^{R'}, \lambda^{\star R'})-\kappa D(R')\}_+
+3\delta_2( \lambda^{\star R}, \lambda^{\star })
+5\kappa D(R)\,.
\end{align*}
Now, we invoke Theorem~\ref{thm:New_Main_Sphere} and a union bound to insure that, if $n^3\geq(2\widetilde R_{\max})^3\vee\widetilde R_{\max}\log(2\widetilde R_{\max}/\alpha)$ then, with probability greater that $1-3|\R|\alpha$, it holds
 \begin{align*}
\forall R'\in \R\,, \qquad\delta_2(\widehat\lambda^{R'},\lambda^{\star R'})\leq  4\delta_2( \lambda^{\star R'}, \lambda^{\star })
+
\kappa_0\sqrt{\frac{\widetilde{R'}}n\left(1+\log\left(\frac{\widetilde{R'}}{\alpha}\right)\right)}
\end{align*}
We choose $\alpha={n^{-1-q}}$, then $\widetilde R_{\max}\log(2\widetilde R_{\max}/\alpha)
=\widetilde R_{\max}\log(2\widetilde R_{\max}n^{q+1})\leq {n}\log(n^{q+2})/2< 0.1(q+2)n^3$ since $x^{-2}\log x\leq 0.09$ and
also note that $1+\log\left({\widetilde R}/{\alpha}\right)\leq (q+3)\log n$. If $q+2\leq 10$ then it holds that $n^3>\widetilde R_{\max}\log(2\widetilde R_{\max}/\alpha)$ and with probability $1-3n^{-q}$ 
\begin{align*}
\forall R'\in \R\,, \qquad\delta_2(\widehat\lambda^{R'},\lambda^{\star R'})\leq  4\delta_2( \lambda^{\star R'}, \lambda^{\star })
+
\kappa_0\sqrt{q+3}D(R')
\end{align*}
Then, with probability $1-3n^{-q}$, provided that $\kappa\geq \kappa_0 \sqrt{q+3}$
 \begin{align*}
\delta_2(\widehat \lambda^{\widehat R}, \lambda^{\star })&\leq 
 5\max_{R'\in \R, R'\geq R} \{4\delta_2( \lambda^{\star R'}, \lambda^{\star })\}_+
+3\delta_2( \lambda^{\star R}, \lambda^{\star })
+5\kappa D(R)\\
&\leq  23\delta_2( \lambda^{\star R}, \lambda^{\star })
+5\kappa D(R)
\end{align*}
Since it holds for any $R$, the first inequality of Theorem~\ref{thm:adap} is proved by choosing $q=8$.

The second statement will follow by the same roadmap as in the end of proof~\ref{proof:New_Main_Sphere}. 
Let us denote by $\Omega$ the set with probability larger than $1-3n^{-q}$ such that the previous inequality is true, and let us find 
a coarse bound on $\delta_2^2(\widehat\lambda^R,\lambda^{\star})$.
Remind that $\delta_2(\widehat\lambda^R,\lambda^{\star R})\leq 
(1 +\sqrt2)\sqrt{\widetilde R}$ for all $R$, see~\eqref{coarsebound}. Furthermore 
$$\delta_2(\lambda^{\star R},\lambda^{\star })=\Big[\sum_{\ell>R}{d_\ell}(\bp_\ell^\star)^2 \Big]^{\frac12}\leq\|\bp\|_2
\leq\sqrt2\,.$$
Hence, using this bound and previous inequality, for all $R\in\R$,
\begin{align*}
\E\left(\delta_2^2(\widehat \lambda^{\widehat R}, \lambda^{\star })\right)&\leq\E\left(\delta_2^2(\widehat \lambda^{\widehat R}, \lambda^{\star })\1_{\Omega}\right)+(1 +2\sqrt2)^2{\widetilde R}_{\max}\P({\Omega^c})\\
&\leq 2(23)^2\delta_2^2( \lambda^{\star R}, \lambda^{\star })
+2(5)^2\kappa^2 D^2(R)+(1 +2\sqrt2)^2{\widetilde R}_{\max}3n^{-q}\\
&\leq 2(23)^2\left(\delta_2^2( \lambda^{\star R}, \lambda^{\star })
+\kappa^2 D^2(R)+n^{1-q}\right)
\end{align*}
provided that $\kappa\geq \kappa_0 \sqrt{q+3}$.
The conclusion follows, choosing for instance $q=2$.

\subsection{Proof of Proposition~\ref{cor:variancepr} }
\label{proof:variancepr}

Note that $\delta_2^2(\lambda^{\star R}, \lambda^{\star})=\sum_{k>\widetilde R}|\lambda_k^{\star}|^2=\sum_{\ell> R}d_{\ell}|\bp_\ell^{\star}|^2$ and this quantity vanishes when $R\geq D$. 
From Theorem~\ref{thm:New_Main_Sphere} and assuming that $n^3\geq(2\widetilde R)^3\vee\widetilde R\log(2\widetilde R/\alpha)$, we derive that, for $R\geq D$, it holds 
$$\delta_2(\widehat \lambda^{R}, \lambda^{\star R})\leq\kappa_0\sqrt{{\widetilde R}\left(1+\log\left({\widetilde R}/{\alpha}\right)\right)/n}$$
with probability at least $1-3\alpha$. Remark also that $\bp^R=\bp$ as soon as $R\geq D$, where $\bp^R(t):=\sum_{\ell=0}^R\bp_\ell^\star c_\ell G_\ell^\beta(t).$
We now work on the set with probability $1-3\alpha$ given by Theorem~\ref{thm:New_Main_Sphere}.

We denote 
$$\delta:=\min_{0\leq i\neq j\leq  D;\ \bp_i^\star\neq0}|\bp_i^\star-\bp_j^\star|\wedge|\bp_i^\star|>0\,,$$
and note that, for $n$ large enough, it holds $\delta_2(\widehat \lambda^{R}, \lambda^{\star R})<\delta/2$. Then there exists a permutation $\sigma^\star\in\mathfrak S_n$ such that for all $k\in[n]$,  $|\widehat\lambda^R_{\sigma^\star(k)}-\lambda_k^{\star R}|<\delta/2$. Now, observe that 
\begin{align*}
\widehat\lambda^R
&=(\underbrace{\widehat\bp_0^R}_{d_0},\underbrace{\widehat\bp_1^R,\ldots, \widehat\bp_1^R}_{d_1}, \ldots,\underbrace{\widehat\bp_D^R,\ldots, \widehat\bp_D^R}_{d_D},\ldots,\underbrace{\widehat\bp_R^R,\ldots, \widehat\bp_R^R}_{d_R},0,\dots),\\
\lambda^{\star R}
&=(\underbrace{\bp_0^\star}_{d_0},\underbrace{\bp_1^\star,\ldots, \bp_1^\star}_{d_1}, \ldots,\underbrace{\bp_D^\star,\ldots, \bp_D^\star}_{d_D},0,\dots).
\end{align*}

We deduce that if $\delta_2(\widehat \lambda^{R}, \lambda^{\star R})<\delta/2$ then for all $h,i,j,k,\ell$ such that $\bp_h^\star\neq0$ it holds
\begin{align}
\notag
\mathrm{If}\quad&
|\widehat\bp_k^R-\bp_h^\star|\vee|\widehat\bp_\ell^R-\bp_h^\star|\leq\delta/2\ (\mathrm{resp.\ }|\widehat\bp_k^R-\bp_i^\star|\vee|\widehat\bp_k^R-\bp_j^\star|\leq\delta/2)\\
\label{eq:stagestarhat}\mathrm{then}\quad&
\widehat\bp_k^R=\widehat\bp_\ell^R\ (\mathrm{resp.\ } \bp_i^\star=\bp_j^\star)\,.
\end{align}
Indeed, one $\widehat\bp_\ell^R$ cannot be at the same time at a distance less than $\delta/2$ to some $\bp_i^\star\neq0$ and at a distance less that $\delta/2$ to some $\bp_j^\star$ since these latter are both at a distance of~$\delta$. Necessarily the permutation~$\sigma^\star$ is such that the group of eigenvalues $\bp_i^\star\neq0$ of multiplicity~$d_i$ matches with the group of eigenvalues $\widehat\bp_i^R$ with the same multiplicity\textemdash recall that the multiplicities~$d_\ell$ are pairwise different since the sequence $d_\ell$ is increasing. 
Thanks to~\eqref{eq:stagestarhat} it holds
$$
\delta_2^2(\widehat \lambda^{R}, \lambda^{\star R})
=\sum_{h;\ \bp_h^\star\neq0}d_h(\widehat \bp_h^R -\bp_h^\star)^2+\sum_{\ell;\ \bp_\ell^\star=0}d_\ell(\widehat \bp_\ell^R)^2=||\widehat \bp^R -\bp||^2_2\,,
$$
noticing that $\|\widehat \bp^{R}-\bp^{ R}\|_2^2=\sum_{\ell=0}^R{d_\ell}(\widehat \bp_\ell^R -\bp_\ell^\star)^2$. It follows that if
\[
n^3\geq(2\widetilde R)^3\vee\widetilde R\log(2\widetilde R/\alpha)\quad\mathrm{and}\quad2\kappa_0\sqrt{{\widetilde R}\left(1+\log\left({\widetilde R}/{\alpha}\right)\right)/n}<\min_{0\leq i\neq j\leq  D;\ \bp_i^\star\neq0}|\bp_i^\star-\bp_j^\star|\wedge|\bp_i^\star|
\]
then $||\widehat \bp^R -\bp||_2\leq\kappa_0\sqrt{{\widetilde R}\left(1+\log\left({\widetilde R}/{\alpha}\right)\right)/n}$ with probability at least $1-3\alpha$. Again, we choose $\alpha={n^{-1-q}}$, then $\widetilde R\log(2\widetilde R/\alpha)
=\widetilde R\log(2\widetilde Rn^{q+1})\leq {n}\log(n^{q+2})/2< 0.1(q+2)n^3$ since $x^{-2}\log x\leq 0.09$ and
also note that $1+\log({\widetilde R}/{\alpha})\leq (q+3)\log n$. With $q=8$, it holds that $n^3>\widetilde R\log(2\widetilde R/\alpha)$. Now,  if
\[
n\geq 2\widetilde R\quad\mathrm{and}
\quad2\kappa_0\sqrt{11{\widetilde R}\log(n)/n}<\min_{0\leq i\neq j\leq  D;\ \bp_i^\star\neq0}|\bp_i^\star-\bp_j^\star|\wedge|\bp_i^\star|
\]
then  $||\widehat \bp^R -\bp||_2\leq\kappa_0\sqrt{11{\widetilde R}\log(n)/n}$ with probability $1-3n^{-8}$, as claimed. 

For the second statement, let us denote by $\Omega$ the set with probability larger than $1-3n^{-q}$ such that the previous inequality is true, and let us find a coarse bound on $||\widehat \bp^R -\bp||_2^2$, for instance $||\widehat \bp^R -\bp||_2\leq (1+\sqrt{2})\sqrt{\widetilde R}$. Hence, using this bound and previous inequality, it holds 
\begin{align*}
\E\left(||\widehat \bp^R -\bp||_2^2\right)&\leq\E\left(||\widehat \bp^R -\bp||_2^2\1_{\Omega}\right)+(1+\sqrt{2})^2{\widetilde R}\P({\Omega^c})\leq \frac{\kappa_0^2\,(q+3)\,{\widetilde R}\log n}{n}
+3(1+\sqrt{2})^2{\widetilde R}n^{-q}
\end{align*}
recalling that $1+\log({\widetilde R}/{\alpha})\leq (q+3)\log n$. The conclusion follows, choosing $q= 1$.

\subsection{ Proof of Corollary~\ref{u}} 
\label{proof:adaptpolynomial}

From Theorem~\ref{thm:adap}, with probability $1-3n^{-8}$
\begin{align*}
\delta_2(\widehat \lambda^{\widehat R}, \lambda^{\star})&\leq
C\min\left(\min_{R< D}\left(\delta_2(\lambda^{\star R}, \lambda^{\star })+\kappa\sqrt{\frac{\widetilde R\log n}n}\right),
\min_{R\geq D}\left(\kappa\sqrt{\frac{\widetilde R\log n}n}\right)\right)\\
&\leq
C\min\left(\min_{R< D}\left(\delta_2(\lambda^{\star R}, \lambda^{\star })+\kappa\sqrt{\frac{\widetilde R\log n}n}\right),
\kappa\sqrt{\frac{\widetilde D\log n}n}\right) 
\end{align*}
Then, with probability $1-3n^{-8}$, 
$$\delta_2(\widehat \lambda^{\widehat R}, \lambda^{\star})\leq
C\kappa\sqrt{\frac{\widetilde D\log n}n} \underset{n\to \infty}\longrightarrow 0.$$
Thus, reasonning as in proof \ref{proof:variancepr}, it holds
$$
\delta_2^2(\widehat \lambda^{\widehat R}, \lambda^{\star})
=||\widehat \bp^{\widehat R} -\bp||^2_2
=\sum_{\ell}d_\ell(\widehat \bp_\ell^{\widehat R}-\bp^\star)^2
\,,
$$
If (by contradiction) $\widehat R<D$, then
$\delta_2( \lambda^{\star \widehat R},\lambda^\star)\geq d_D|\bp_D^\star|^2>0$ and then 
$\delta_2(\widehat \lambda^{\widehat R}, \lambda^{\star})$ cannot tend to 0. Thus necessarily $\widehat R\geq D$. 
Moreover, since 
$
\delta_2^2(\widehat \lambda^{\hat R}, \lambda^{\star})
=||\widehat \bp^{\hat R} -\bp||^2_2\,
$, 
 with probability $1-3n^{-8}$
$$||\widehat \bp^{\hat R} -\bp||^2_2\leq
C^2\kappa^2{\frac{\widetilde D\log n}n} .$$

Finally we can write
\begin{align*}
\E\left(||\widehat \bp^{\widehat R} -\bp||_2^2\right)&\leq \E\left(||\widehat \bp^{\widehat R}  -\bp||_2^2\1_{\Omega}\right)+(1+\sqrt{2})^2{\widetilde R_{\max}}\P({\Omega^c})\\
&\leq C^2\kappa^2\frac{{\widetilde D}\log n}{n}
+3(1+\sqrt{2})^2{\widetilde R_{\max}}n^{-8}\leq 
(C^2\kappa^2+9)\frac{{\widetilde D}\log n}{n}.
\end{align*}

\subsection{Proof of Theorem~\ref{thm:SSCCSSversion}}
\label{app:SSCCSSversion}
The proof follows the same guidelines as in the sphere example. The only difference is that we do not have Gegenbauer polynomials but normalized Jacobi polynomials $Z_\ell$ now. In particular, we have previously used the fact that Gegenbauer polynomials are bounded. Here, the same result holds in virtue of~\eqref{eq:BorneZon}. 

To be specific, when $\bS$ is a compact symmetric space, note that
\begin{itemize}
\item $\displaystyle\sum_{r=1}^R\phi_r^2=\sum_{\mathbf r=0}^{R-1}\sqrt{d_{\mathbf r}}\,\mathrm{zon}^{\mathbf r}(e_{\bS})=\sum_{\mathbf r=0}^{R-1}{d_{\mathbf r}}=\widetilde{R-1}$ and we get that $\rho(R)\leq\widetilde{R}$ when invoking Lemma~\ref{lem:Lemma1_KG_revisited} or Theorem~\ref{thm:KG_revisited};
\item we define $\displaystyle\bp^R(t):=\sum_{\ell=0}^{R}\sqrt{d_{\ell}}\bp_\ell^\star Z_\ell(t)$ and we get that 
\begin{align*}
W_{\widetilde R}(x,y)&=\bp^R(\cos(\gamma(x,y)))\\
W_{\widetilde R}(x,x)&=\sum_{\ell=0}^{R}\sqrt{d_{\ell}}\bp_\ell^\star Z_\ell(1)=\sum_{\ell=0}^{R}{d_{\ell}}\bp_\ell^\star\,,
\end{align*}
by~\eqref{eq:BorneZon}. This identity can be used in place of~\eqref{eq:GengenatOne}.
\end{itemize}
Using these inequalities and following the same guidelines as in the sphere example, one can prove the result.

\section{Computational Considerations}

\subsection{Proof of Theorem~\ref{thm:TrueComplexity}}
\label{proof:TrueComplexity}
Without loss of generality, assume that $\lambda_1\geq\lambda_2\geq\ldots
 \geq\lambda_n$. 
Similarly, let $u\in\mathcal M_R$ 
and remember that we can group the coordinates of $u$ in groups of sizes $d_\ell$ for $\ell=0,\ldots,R$. Reordering by decreasing order, there exists $\tau\in\mathfrak S_{R+1}$ such that 
\begin{align*}
\underbrace{u_{\widetilde{\tau(1)-1}+1}=\ldots=u_{\widetilde{\tau(1)}}}_{d_{\tau(1)}}\geq\ldots\geq& \underbrace{u_{\widetilde{\tau(q)-1}+1}=\ldots=u_{{\widetilde{\tau(q)}}}}_{d_{\tau(q)}}\geq0>\\
&\underbrace{u_{{\widetilde{\tau(q+1)-1}+1}}=\ldots=u_{{\widetilde{\tau(q+1)}}}}_{d_{\tau(q+1)}}\geq\ldots\geq \underbrace{u_{{\widetilde{\tau(R)-1}+1}}=\ldots=u_{{\widetilde{\tau({R+1})}}}}_{d_{\tau({R+1})}}\,,
\end{align*}
for some $q\in\bbN$. We may consider that $q=0$ and respectively $q={R+1}$ in degenerate cases when all the coefficients are negative and respectively non negative. Remember that $u\in\bbR^{\widetilde R}$ and set $u_k=0$ for $k>\widetilde R$ such that, completing with zeros, consider that $u\in\bbR^{n}$. One has 
\begin{align*}
\underbrace{u_{\widetilde{\tau(1)-1}+1}=\ldots=u_{\widetilde{\tau(1)}}}_{d_{\tau(1)}}\geq\ldots\geq& \underbrace{u_{\widetilde{\tau(q)-1}+1}=\ldots=u_{{\widetilde{\tau(q)}}}}_{d_{\tau(q)}}\geq \underbrace{u_{\widetilde R+1}=\ldots=u_n}_{n-\widetilde R}=0>\\
&\underbrace{u_{{\widetilde{\tau(q+1)-1}+1}}=\ldots=u_{{\widetilde{\tau(q+1)}}}}_{d_{\tau(q+1)}}\geq\ldots\geq \underbrace{u_{{\widetilde{\tau({R+1})-1}+1}}=\ldots=u_{{\widetilde{\tau({R+1})}}}}_{d_{\tau({R+1})}}\,.
\end{align*}
Note that 
\beq
\label{eq:HL_Proof}
\min_{\sigma\in\mathfrak S_n}
\Big\{
\sum_{k=1}^{\widetilde R}(u_k-\lambda_{\sigma(k)})^2+\sum_{k=\widetilde R+1}^n\lambda_{\sigma(k)}^2
\Big\}
=\delta_2^2((\lambda_k)_{k=1}^n,(u_k)_{k=1}^n)
=\min_{\sigma'\in\mathfrak S_n}
\Big\{
\sum_{k=1}^{n}(u_{\sigma'(k)}-\lambda_{k})^2
\Big\}\,,
\eeq
taking $\sigma'=\sigma^{-1}$. Using Hardy-Littlewood rearrangement inequality \cite[Theorem 368]{hardy1952inequalities}, it is standard to observe that
\begin{align*}
\eqref{eq:HL_Proof}=
&\underbrace{(u_{\widetilde{\tau(1)-1}+1}-\lambda_{1})^2+\cdots+(u_{\widetilde{\tau(1)}}-\lambda_{d_{\tau(1)}})^2}_{d_{\tau(1)}}+\cdots\\
&+ \underbrace{(u_{\widetilde{\tau(q)-1}+1}-\lambda_{d_{\tau(1)}+\cdots+d_{\tau(q-1)}+1})^2+\cdots+(u_{{\widetilde{\tau(q)}}}-\lambda_{d_{\tau(1)}+\cdots+d_{\tau(q)}})^2}_{d_{\tau(q)}}\\
&+\underbrace{\lambda^2_{d_{\tau(1)}+\cdots+d_{\tau(q)}+1}+\cdots
+\lambda^2_{d_{\tau(1)}+\cdots+d_{\tau(q)}+n-\widetilde R}}_{n-\widetilde R}\\
&+\underbrace{(u_{{\widetilde{\tau(q+1)-1}+1}}-\lambda_{d_{\tau(1)}+\cdots+d_{\tau(q)}+n-\widetilde R+1})^2+\cdots
+(u_{{\widetilde{\tau(q+1)}}}-\lambda_{d_{\tau(1)}+\cdots+d_{\tau(q+1)}+n-\widetilde R})^2}_{d_{\tau(q+1)}}\\
&+\cdots\\
&+ \underbrace{(u_{{\widetilde{\tau({R+1})-1}+1}}-\lambda_{d_{\tau(1)}+\cdots+d_{\tau({R})}+n-\widetilde R})^2+\cdots
+(u_{{\widetilde{\tau({R+1})}}}-\lambda_n)^2}_{d_{\tau({R+1})}}\,.
\end{align*}
Hence a permutation $\sigma'$ achieving the minimum in~\eqref{eq:HL_Proof} is given by
\[
\sigma^{-1}=\sigma'=
\left(\begin{array}{cc}
k & \sigma'(k)\\
1 & \widetilde{\tau(1)-1}+1 \\
\vdots & \vdots \\
d_{\tau(1)} & \widetilde{\tau(1)} \\
\vdots & \vdots \\
d_{\tau(1)}+\cdots+d_{\tau(q-1)}+1 & \widetilde{\tau(q)-1}+1 \\
\vdots & \vdots \\
d_{\tau(1)}+\cdots+d_{\tau(q)} & \widetilde{\tau(q)} \\
d_{\tau(1)}+\cdots+d_{\tau(q)} +1 & \widetilde R+1 \\
\vdots & \vdots \\
d_{\tau(1)}+\cdots+d_{\tau(q)} +n-\widetilde R & n \\
\vdots & \vdots \\
d_{\tau(1)}+\cdots+d_{\tau(R)}+n-\widetilde R & \widetilde{\tau({R+1})-1}+1\\
\vdots & \vdots\\
n & \widetilde{\tau({R+1})}\\
\end{array}\right)
\]
Remark that this permutation can be explicitly written given $\tau\in\mathfrak S_{R+1}$ and $q\in[0,R]$. It follows that the set of permutations $\sigma'$ achieving the minimum in the right hand side of~\eqref{eq:HL_Proof} is in one to one correspondence with a subset of $\mathfrak S_{R+2}$. Since $\sigma=\sigma'^{-1}$ in~\eqref{eq:HL_Proof} the same result holds true for the permutation $\sigma$ achieving the minimum of the left hand side of~\eqref{eq:HL_Proof}, proving the result. We define $\mathfrak H_R$ has the set of permutation $\sigma$ achieving the minimum of the left hand side of~\eqref{eq:HL_Proof}. The proof given here is constructive and it gives an explicit expression of~$\mathfrak H_R$.

\end{document}